\newtheorem{theorem}{Theorem}[section]
\newtheorem{definition}[theorem]{Definition}
\newtheorem{proposition}[theorem]{Proposition}
\newtheorem{lemma}[theorem]{Lemma}
\newtheorem{remark}[theorem]{Remark}
\newtheorem{conjecture}[theorem]{Conjecture}
\newcommand{\Z}{{\mathbb{Z}}}
\newcommand{\N}{{\mathbb{N}}}
\newcommand{\F}{\mathbb{F}}
\newcommand{\zero}{{\mathbf{0}}}
\newcommand{\ba}{a}
\newcommand{\bb}{b}
\newcommand{\bd}{d}
\newcommand{\bu}{\mathbf{u}}
\newcommand{\be}{\mathbf{e}}
\newcommand{\bomega}{\boldsymbol\omega}
\newcommand{\wt}{\operatorname{wt}}
\newcommand{\Aut}{\operatorname{Aut}}
\newcommand{\Supp}{\operatorname{Supp}}
\newcommand{\Gcd}{\operatorname{gcd}}
\newcommand{\HFP}{\operatorname{HFP}}
\newcommand{\modu}{\operatorname{mod}}
\title{Hadamard full propelinear codes with associated group $C_{2t}\times C_2$; rank and kernel\footnote{Submitted to \emph{Advances in Mathematics of Communications}. This work has been partially supported by the Spanish grants TIN2016-77918-P (AEI/FEDER, UE).}
}
\author{Ivan Bailera,  Joaquim Borges and Josep Rif\`{a}\\
\small Department of Information and Communications Engineering,\\
\small Universitat Aut\`onoma de Barcelona, Spain\\
\small \mailsa}
\begin{document}

\maketitle
\begin{abstract}
We introduce the Hadamard full propelinear codes that factorize as direct product of groups such that their associated group is $C_{2t}\times C_2$. We study the rank, the dimension of the kernel, and the structure of these codes. For several specific parameters we establish some links from circulant Hadamard matrices and the nonexistence of the codes we study.
We prove that the dimension of the kernel of these codes is bounded by $3$ if the code is nonlinear. We also get an equivalence between circulant complex Hadamard matrix and a type of Hadamard full propelinear code, and we find a new example of circulant complex Hadamard matrix of order $16$.\\

\noindent
\textbf{Keywords:} Hadamard full propelinear codes; Hadamard matrices; kernel; rank.\\
\noindent
\textbf{Mathematics Subject Classification (2010):} 5B; 5E; 94B
\end{abstract}

\section{Introduction}

The Hadamard conjecture proposes that an Hadamard matrix of order $4t$ exists for every positive integer $t$. The search for a proof of the Hadamard conjecture has stimulated several advances in the fields of design theory and combinatorics. Concepts such as Hadamard groups, difference sets, cocyclic Hadamard matrices and Hadamard full propelinear codes are related in different ways.

Ito \cite{ito}, \cite{itoII}, \cite{itoIII} introduced the concept of Hadamard groups, he showed a relation between Hadamard difference sets and Hadamard groups and he conjectured that the dicyclic group $Q_{8t}$ is always an Hadamard group. Schmidt \cite{sch} verified Ito's conjecture for $1 \leq t \leq 46$. Rif\`a and Su\'arez \cite{rs} showed that the concept of Hadamard group is equivalent to Hadamard full propelinear code. Flannery \cite{flan} proved that the concepts of cocyclic Hadamard matrix and Hadamard group are equivalent. De Launey, Flannery and Horadam \cite{lfh} proved that the existence of a cocyclic Hadamard matrix of order $4t$ is equivalent to the existence of a normal relative difference set with parameters $(4t,2,4t,2t)$. The above equivalences give us different paths to approach to the proof of Ito's conjecture. For example, the cocyclic Hadamard conjecture (de Launey and Horadam) states that there is a cocyclic Hadamard matrix of order $4t$ for all $t \in \N$, that is equivalent to Ito's conjecture, but not conversely. The cocycles over the groups $C_t \times C_2^2$, for $t$ odd, were studied by Baliga and Horadam \cite{hb}. The solution set includes all Williamson Hadamard matrices, so this set of groups is potentially a uniform source for generation of Hadamard matrices. Any Hadamard matrix of order less than or equal to $20$ is cocyclic. For orders less than or equal to $200$, only order $4t=188=4\cdot 47$ is not yet known to have a cocyclic construction.

Let $H$ be a cocyclic Hadamard matrix over a finite group $G$. We call \emph{associated group} to this group $G$ in the context of Hadamard full propelinear codes. An Hadamard full propelinear code, as a group, is a central group extension of $G$ by $C_2$.
In order to study Hadamard full propelinear codes with a determined associated group, it seems that the easiest way to start is to set the cyclic group as associated group. If the associated group is the cyclic group $C_{4t}$ of order $4t$ then the corresponding Hadamard full propelinear code has one of the two following group structures, $C_{8t}$ or $C_{4t} \times C_2$. Rif\`a and Su\'arez \cite{rs} proved that an Hadamard full propelinear code cannot be a cyclic group and Rif\`a \cite{rifacirculant} showed that an Hadamard full propelinear code of type $C_{4t} \times C_2$ is equivalent to a circulant Hadamard code. Ryser's conjecture \cite{rys} says that there do not exist circulant Hadamard matrices for orders greater than $4$. Therefore, the next natural step is to study Hadamard full propelinear codes with associated group whose structure is a product of two cyclic groups, $C_{2t} \times C_2$, which are equivalent to the cocyclic Hadamard matrices over $C_{2t} \times C_2$. Baliga and Horadam \cite{hb} studied this class of codes for the case $t$ odd.

The goal of this paper is to study the rank and dimension of the kernel of the Hadamard full propelinear codes whose group structure consists of a nontrivial direct product of groups such that their associated group is $C_{2t}\times C_2$.
In \Cref{prelim}, we present the preliminaries about Hadamard full propelinear codes and known lemmas about the rank and the dimension of the kernel.
In \Cref{C2tC2}, we introduce the Hadamard full propelinear codes with associated group $C_{2t}\times C_2$ obtaining four kinds of codes that are studied in Subsections \ref{4tu2}, \ref{2t22u}, \ref{2t4u}, and \ref{tQu}.
In each subsection, we study the rank and the dimension of the kernel of every kind of code that we have introduced. 
Also, we prove that the parameter $t$ is even if the Hadamard full propelinear code has an abelian group structure, and the parameter $t$ is odd if the code has a non-abelian group structure. 
In Subsection \ref{4tu2} and Subsection \ref{tQu}, we prove that the dimension of the kernel is equal to one.
In Subsection \ref{2t22u}, we prove that if the dimension of the kernel is greater than one, then either there exists a circulant Hadamard matrix or there exist a code of Subsection \ref{4tu2}. 
In Subsection \ref{2t4u}, we present a conjecture about the nonexistence of a kind of Hadamard full propelinear code that is equivalent to Arasu et al. conjecture \cite{Arasu2002} about the nonexistence of circulant complex Hadamard matrices.
Also, we show that the circulant complex Hadamard matrix of order $2t=16$, introduced by Arasu et al. in \cite{Arasu2002}, corresponds to an Hadamard full propelinear code with a group structure isomorphic to $C_{2t}\times C_4$ with rank equal to $11$ and dimension of the kernel equal to $2$. Moreover, we have found another nonequivalent Hadamard full propelinear code with a group structure isomorphic to $C_{2t}\times C_4$ of length $2t=16$ with rank equal to $13$ and dimension of the kernel equal to $1$ which corresponds to a circulant complex Hadamard matrix.

\section{Preliminaries} \label{prelim}

Let $\F$ be the binary field. The \textit{Hamming distance} between two vectors $x, y \in \F^n$, denoted by $d_H (x, y)$, is the number of the coordinates in which $x$ and $y$ differ.
The \textit{Hamming weight} of $x$ is given by $\wt_H(x)=d_H(x,\be)$, where $\be$ is the all-zero vector.
We denote the all-one vector of length $n$ by $\bu_n$, and the vector $(1,0,\ldots,1,0)$ of length $2n$ by $\bomega_{2n}$, but we will write $\bu$ and $\bomega$ when the length is clear from the context. We denote the complement of $x\in \F^n$, $x+\bu$, by $\bar{x}$.
A $(n,M,d)$-code is a subset, $C$, of $\F^n$ where $\left|C\right|=M$ and $d$ is the greatest value such that $d_H(x,y)\geq d$ for all $x,y\in C$ with $x \neq y$.
The elements of a code are called \textit{codewords} and $d$ is called \textit{minimum distance}.
The parameter $d$ determines the error-correcting capability of $C$ which is given by $e = \lfloor \frac{d-1}{2} \rfloor$.
For a vector $x$ in $\F^n$, the support of $x$, denoted by $\Supp(x)$, is defined as the set of its nonzero positions.
The \textit{rank} of a binary code $C$, $r=rank(C)$, is the dimension of the linear span of $C$.
The \textit{kernel} of a binary code is the set of words which keeps the code invariant by translation, $K(C) := \{z \in \F^n : C + z = C\}$.
The kernel of a code $C$ is a linear subspace and, assuming the all-zero vector is in $C$, we have that $K(C) \subseteq C$. 
We denote the dimension of the kernel of $C$ by $k=ker(C)$.
From now, we assume $\be \in C$ for every code $C$.

\begin{definition}
An Hadamard matrix is a $n \times n$ matrix $H$ containing entries from the set $\{1,-1\}$, with the property that:
\vspace{-0.3cm}

$$HH^T=nI_n,$$

where $I_n$ is the identity matrix of order $n$.
\end{definition}

If $n>2$ it is easy to prove that any three rows (columns) agree in precisely $n/4$ coordinates. Thus, if there is an Hadamard matrix of order $n$, with $n>2$, then $n$ is multiple of 4. We will say $n=4t$. Two Hadamard matrices are equivalent if one can be obtained from the other by permuting rows and/or columns and multiplying them by $-1$. We can change the first row and column of an Hadamard matrix into $+1$'s and we obtain an equivalent Hadamard matrix which is called \textit{normalized}. The matrix obtained from a normalized Hadamard matrix, by replacing all 1's by 0's and all $-1$'s by  1's, is called \textit{binary normalized Hadamard matrix}. The binary code consisting of the rows of a binary Hadamard matrix and their complements is called a (binary) \textit{Hadamard code}, which is of length $n$, with $2n$ codewords, and minimum distance $n/2$.

Let $S_n$ be the symmetric group of permutations of the set $\{1,\ldots,n\}$. For any $\pi \in S_n$ and $x\in \F^n$, $x=(x_1,\ldots,x_n)$, we write $\pi(x)$ to denote $(x_{\pi^{-1}(1)},\ldots,x_{\pi^{-1}(n)})$.
Two binary codes $C_1,C_2$ of length $n$ are said to be \textit{isomorphic} if there is a coordinate permutation $\pi \in S_n$ such that $C_2=\{\pi(x) : x\in C_1\}$. They are said to be \textit{equivalent} if there is a vector $y\in \F^n$ and a coordinate permutation $\pi\in S_n$ such that $C_2=\{y+\pi(x) : x\in C_1\}$.

\begin{definition}\label{hg}\cite{ito}
An Hadamard group $G$ of order $8t$ is a group containing a $4t$-subset $D$ and a central involution $u$ ($D$ is called Hadamard subset corresponding to $u$), such that
\begin{enumerate}[i)]
\item $D$ and $Da$ intersect exactly in $2t$ elements, for any $a \notin \langle u \rangle \subset G$,
\item $Da$ and $\{b,bu\}$ intersect exactly in one element, for any $a,b\in G$.
\end{enumerate}
\end{definition}
We note that $G=D \cup Du$, and $D$ and $Du$ are disjoint. It is immediate from the second condition of the previous definition. 
\begin{lemma}\cite{ito}\label{ordersquare}
Let $G$ be an Hadamard group of order $8t$ such that $G=N \times \langle u \rangle$, where $N$ is a normal subgroup of $G$ of index $2$. Then the order of $N$ is a square.
\end{lemma}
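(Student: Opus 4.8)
The plan is to reformulate the combinatorial conditions of \Cref{hg} as an identity in the integral group ring $\Z[G]$, and then to exploit the direct-product hypothesis $G=N\times\langle u\rangle$ by projecting that identity onto $\Z[N]$. Throughout I identify a subset $X\subseteq G$ with the group-ring element $\sum_{x\in X}x$, write $X^{(-1)}=\sum_{x\in X}x^{-1}$, and set $T=\sum_{g\in G}g$. First I would establish
\[
D^{(-1)}D \;=\; 4t\,e \;+\; 2t\,(T-e-u).
\]
Indeed, the coefficient of a group element $a$ in $D^{(-1)}D$ equals $|\{d_1\in D: d_1a\in D\}|=|D\cap Da^{-1}|$. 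By condition (i) this is $2t$ whenever $a\notin\langle u\rangle$; it equals $|D|=4t$ when $a=e$; and it equals $|D\cap Du|=0$ when $a=u$, since $D$ and $Du$ are disjoint (as observed right after \Cref{hg}). This is exactly the displayed identity, i.e. $D$ is a relative difference set with the expected parameters.

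Next, using $G=N\times\langle u\rangle$, I would consider the ring homomorphism $\psi\colon\Z[G]\to\Z[N]$ determined by $\psi(n)=n$ for $n\in N$ and $\psi(u)=-1$; this is well defined because $u$ is central and $u^2=e$. Since each coset $\{n,nu\}$ contributes $\psi(n)+\psi(nu)=n-n=0$, one gets $\psi(T)=0$ and $\psi(e+u)=0$. Applying $\psi$ to the identity above and writing $E=\psi(D)=\sum_{n\in N}b_n\,n$ with $b_n\in\{-1,0,1\}$ yields $E^{(-1)}E=4t\,e$. Comparing the coefficient of $e$ gives $\sum_{n\in N}b_n^2=4t=|N|$; as there are exactly $|N|$ summands and each $b_n^2\le 1$, every $b_n=\pm1$.

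Finally I would read off a Hadamard matrix from $E$. The identity $E^{(-1)}E=4t\,e$ says precisely that the $|N|\times|N|$ matrix $H=(b_{gh^{-1}})_{g,h\in N}$, which has entries in $\{\pm1\}$, satisfies $H^{T}H=|N|\,I$; thus $H$ is a Hadamard matrix developed over $N$. Each of its rows sums to the same constant $s=\sum_{n\in N}b_n$, so $H\mathbf{j}=s\mathbf{j}$ for the all-one vector $\mathbf{j}$. Computing $\mathbf{j}^{T}H^{T}H\,\mathbf{j}$ in two ways, namely as $\mathbf{j}^{T}(|N|\,I)\mathbf{j}=|N|^2$ and as $(H\mathbf{j})^{T}(H\mathbf{j})=s^2|N|$, forces $|N|=s^2$, a perfect square.

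The routine part is the coefficient bookkeeping in the first step; the conceptual heart, and the step I expect to be the main obstacle to state cleanly, is the projection $u\mapsto -1$, which must kill \emph{both} $T$ and $e+u$ so that the right-hand side collapses to the scalar $4t\,e$ and the relative difference set becomes a genuine group-developed Hadamard matrix over $N$. Once that is in place, the perfect-square conclusion is forced purely by the constant row sums and holds regardless of whether $N$ is abelian.
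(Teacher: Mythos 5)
Your proof is correct. There is nothing in the paper to compare it against: the paper states this lemma without proof, citing Ito \cite{ito}, and what you have reconstructed is essentially the classical argument underlying that citation. Each step checks out. The identity $D^{(-1)}D = 4t\,e + 2t\,(T-e-u)$ is exactly the relative-difference-set reformulation of \Cref{hg}; the projection $u\mapsto -1$ kills $T$ and $e+u$ as you say; the coefficient-of-$e$ count forces $b_n=\pm 1$; and the row-sum (``excess'') argument for the group-developed matrix $H=(b_{gh^{-1}})$ correctly yields $|N|=s^2$, with the non-abelian bookkeeping handled properly (your indexing $b_{gh^{-1}}$ makes $H^T H=|N|I$ follow directly from $E^{(-1)}E=4t\,e$ without any commutativity assumption). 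Two minor remarks. First, the well-definedness of $\psi$ rests on the direct-product hypothesis itself --- i.e.\ on $u\notin N$ and the uniqueness of the expression $nu^i$ --- not merely on $u$ being a central involution, so the parenthetical justification you give is slightly misplaced even though you do invoke $G=N\times\langle u\rangle$ where it matters. Second, condition (ii) of \Cref{hg} with $a=e$ already says that exactly one of $n,nu$ lies in $D$ for each $n\in N$, so $b_n=\pm1$ is immediate; your derivation of this from the coefficient of $e$ is a correct alternative rather than a necessity, and it is worth noting that the disjointness of $D$ and $Du$ (which your first step uses) comes from that same condition.
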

\begin{definition}\cite{but}
A relative $(v,m,k,\lambda)$-difference set in a group $G$ relative to a normal subgroup $N$, where $|G|=vm$ and $|N|=m$, is a subset $D$ of $G$ such that $|D|=k$ and the multiset of quotients $d_1d_2^{-1}$ of distinct elements $d_1,d_2 \in D$ contains each element of $G\setminus N$ exactly $\lambda$ times, and contains no elements of $N$.
\end{definition}
An \textit{Hadamard difference set} in a group $G$ is a relative $(4t,2,4t,2t)$-difference set in a group $G$ of order $8t$. Let $D$ an Hadamard difference set in a group $G$ of order $8t$ relative to a normal subgroup $N \simeq \F$ of $G$. Thus $G$ is an Hadamard group of order $8t$.
The exponent of a group $G$, $\exp (G)$, is the smallest positive integer $n$ such that $g^n = 1$ for all $g \in G$.
\begin{proposition}\cite{kr}\label{exponentbound} 
There exists an Hadamard difference set in an abelian group $G$ of order $2^{2s+2}$ if and only if $\exp(G) \leq 2^{s+2}$.
\end{proposition}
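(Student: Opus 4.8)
The statement is an equivalence, and I would prove the two implications by entirely different means: the necessity of the exponent bound is the classical character-theoretic argument (in the style of Turyn), which is short once the right cyclotomic fact is isolated, while the sufficiency is the deep recursive construction of Kraemer, which I would invoke from \cite{kr} rather than reproduce. The common starting point is to pass to the group ring $\Z[G]$, writing the difference set as $D=\sum_{d\in D}d$ and $D^{(-1)}=\sum_{d\in D}d^{-1}$, and then to translate the defining relation through the characters of the abelian group $G$. The only input the necessity direction needs is that $\chi(D)\overline{\chi(D)}$ is a fixed power of $2$, namely $|\chi(D)|^2=2^{2s}$, for every nonprincipal character $\chi$ that does not annihilate $D$ (this is the Menon normalization; the relative formulation yields the same magnitude up to the harmless factor coming from the central $C_2$).

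For necessity, set $m=\exp(G)=2^a$ and choose a character $\chi$ of order exactly $m$, so that $\chi(D)\in\Z[\zeta_m]$. In the cyclotomic ring the prime $2$ is totally ramified, $(2)=\mathfrak{p}^{2^{a-1}}$ with $\mathfrak{p}=(1-\zeta_m)$, and $\mathfrak{p}$ is fixed by complex conjugation because it is the unique prime above $2$. Comparing $\mathfrak{p}$-valuations in the ideal identity $(\chi(D))(\overline{\chi(D)})=(2^{2s})$ and using that conjugation cannot change the valuation at a fixed prime forces $v_{\mathfrak{p}}(\chi(D))=s\cdot 2^{a-1}=v_{\mathfrak{p}}(2^s)$, so $\chi(D)=2^s\beta$ for an algebraic integer $\beta$ with $\beta\overline{\beta}=1$. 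Since each Galois conjugate of $\chi(D)$ equals $\chi'(D)$ for another character $\chi'$ of order $m$, the magnitude condition applies to all conjugates, so every conjugate of $\beta$ has absolute value $1$; by Kronecker's theorem $\beta$ is a root of unity, and hence $\chi(D)=\pm 2^s\zeta_m^{j_0}$ for some $j_0$.

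The exponent bound now drops out of a counting step. Expand $\chi(D)=\sum_{j=0}^{m-1}a_j\zeta_m^{j}$ with $a_j=|D\cap\chi^{-1}(\zeta_m^{j})|\ge 0$, and reduce modulo the relation $\zeta_m^{m/2}=-1$; the monomial form of $\chi(D)$ forces $a_j-a_{j+m/2}=0$ for all but one index and $|a_{j_1}-a_{j_1+m/2}|=2^s$ for the exceptional one, so some fibre satisfies $a_j\ge 2^s$. But $\chi$ is surjective onto the $m$-th roots of unity with kernel of index $m$, so each fibre $\chi^{-1}(\zeta_m^{j})$ has exactly $|G|/m$ elements; therefore $2^s\le |G|/m=2^{2s+2-a}$, which gives $\exp(G)=2^a\le 2^{s+2}$.

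For the converse I would cite Kraemer's construction: proceeding by induction on $s$, one produces a Menon--Hadamard difference set in every abelian $2$-group of order $2^{2s+2}$ whose exponent is at most $2^{s+2}$, by assembling difference sets in smaller groups through covering and character arguments that match the magnitude $2^s$ on each nonprincipal character. This construction is the genuine obstacle of the proposition; by contrast, the necessity direction presented above is elementary once the total ramification of $2$ and the conjugation-invariance of $\mathfrak{p}$ are in hand. Accordingly my plan is to give the character argument in full and to defer the sufficiency to \cite{kr}.
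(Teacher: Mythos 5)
The paper offers no proof of this proposition at all: it is stated as a direct quotation of Kraemer's theorem \cite{kr}, so there is nothing internal to compare your argument against, and your proposal in fact does strictly more than the paper does. The half you prove is correct. The necessity direction is the classical Turyn exponent bound (cf.\ \cite{tur}), and your execution is sound: for a Menon--Hadamard difference set $D$ in an abelian $G$ of order $2^{2s+2}$ one has $\chi(D)\overline{\chi(D)}=k-\lambda=2^{2s}$ for every nonprincipal character $\chi$; choosing $\chi$ of order $m=\exp(G)=2^a$, using that $2$ is totally ramified in $\Z[\zeta_m]$ with conjugation-stable prime $\mathfrak{p}=(1-\zeta_m)$ forces $v_{\mathfrak{p}}(\chi(D))=v_{\mathfrak{p}}(2^s)$, Kronecker's theorem upgrades this to $\chi(D)=\pm 2^s\zeta_m^{j_0}$, and comparing coefficients in the integral basis $1,\zeta_m,\ldots,\zeta_m^{m/2-1}$ shows some fibre of $\chi$ meets $D$ in at least $2^s$ points, whence $2^s\le |G|/m$ and $2^a\le 2^{s+2}$. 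Deferring the converse to Kraemer's recursive construction is entirely reasonable---that is the genuinely hard half (it resolved a long-standing conjecture), and the paper defers both halves. Two remarks. First, your caveat about characters ``that do not annihilate $D$'' is needed only in the relative-difference-set normalization, where characters trivial on the forbidden $C_2$ do annihilate $D$; in the Menon normalization every nonprincipal character has $|\chi(D)|^2=2^{2s}$. Second, your choice of the Menon normalization is the correct reading of the statement: although the paper's formal definition of ``Hadamard difference set'' is a relative $(4t,2,4t,2t)$-difference set in a group of order $8t$, the application of this proposition in \Cref{boundtpow2} (to $C_{2t}\times C_2$ of order $4t=2^{2s+2}$) only makes sense for Menon difference sets in a group of order $2^{2s+2}$, which is exactly Kraemer's setting and the one your character argument addresses.
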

\begin{proposition}\label{quotientbound}\cite{dil}
Let $C$ be an Hadamard group of order $2^{2s+2}$ and suppose that $C$ has a normal subgroup $G$ such that $C/G$ is cyclic. Then $C/G$ has order at most $2^{s+2}$.
\end{proposition}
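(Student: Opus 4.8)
The plan is to translate the group-theoretic hypothesis into a statement about character sums and then let the arithmetic of the cyclotomic field $\Q(\zeta_{2^a})$ force the bound. Since $C$ is a $2$-group of order $2^{2s+2}$, every cyclic quotient has $2$-power order, so write $|C/G|=2^{a}$; the goal is $a\le s+2$. One case is essentially free: if $C$ is abelian, then $C/G$ being cyclic means its order equals its exponent, which divides $\exp(C)$, and since a Hadamard group carries an Hadamard difference set, \Cref{exponentbound} gives $|C/G|\mid\exp(C)\le 2^{s+2}$. So the substance lies in the nonabelian case, which I would attack through representations. Identifying (in the integral group ring $\Z[C]$) a subset with the sum of its elements, the Hadamard subset $D$ attached to the central involution $u$ of \Cref{hg} is a relative $(4t,2,4t,2t)$-difference set, which is equivalent to $D\,D^{(-1)}=4t\,e+2t\bigl(\widehat C-\widehat{\langle u\rangle}\bigr)$, with $\widehat C=\sum_{g\in C}g$, $\widehat{\langle u\rangle}=e+u$, and $4t=2^{2s+1}$.

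First I would pull a faithful character of $C/G$ back to a linear character $\chi$ of $C$ of order $2^a$ with $\ker\chi=G$. As $u$ is central of order two, $\chi(u)\in\{1,-1\}$, and when $u\notin G$ the image $uG$ generates the unique involution of the cyclic group $C/G$, forcing $\chi(u)=-1$. Then $\chi(\widehat{\langle u\rangle})=0$ and $\chi(\widehat C)=0$, so applying $\chi$ to the identity yields $|\chi(D)|^{2}=4t=2^{2s+1}$. I would also record the shape of $\chi(D)$: grouping $D$ by its image under $\chi$, the multiplicities $c_j=|D\cap\chi^{-1}(\zeta^{j})|$ satisfy $0\le c_j\le|G|=2^{2s+2-a}$ and, since $D\sqcup Du=C$ with $\chi(u)=\zeta^{2^{a-1}}=-1$, the balance relations $c_j+c_{j+2^{a-1}}=2^{2s+2-a}$. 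Substituting gives the closed form $\chi(D)=2\sum_{j=0}^{2^{a-1}-1}c_j\zeta^{j}-2^{2s+2-a}\sum_{j=0}^{2^{a-1}-1}\zeta^{j}$.

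The heart of the argument is number-theoretic. In $\Z[\zeta_{2^a}]$ the prime $2$ is totally ramified, $(2)=(\pi)^{2^{a-1}}$ with $\pi=1-\zeta_{2^a}$, and complex conjugation fixes the unique prime $\pi$; hence $\chi(D)\overline{\chi(D)}=2^{2s+1}$ forces $v_\pi(\chi(D))=(2s+1)2^{a-2}$. The main obstacle is to contradict this when $a$ is too large: the closed form above lets one compute $v_\pi(\chi(D))$ a second way (through $v_\pi(2^{2s+3-a})=(2s+3-a)2^{a-1}$), and a comparison of these two $\pi$-adic valuations, together with the parity forced by the factor $\pi$ in the closed form, becomes inconsistent once $a\ge s+3$. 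This is precisely the self-conjugacy/field-descent mechanism behind Turyn's exponent bound, and extracting the constant $s+2$ rather than something weaker is the delicate point.

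Finally I would treat the remaining nonabelian case $u\in G$, where every linear character from $C/G$ kills $u$ and only gives $\chi(D)=0$. Here I would replace $\chi$ by a nonlinear irreducible representation $\pi$ of $C$ with $\pi(u)=-I$, for which the same identity yields $\pi(D)\pi(D)^{*}=4t\,I$, and run the $\pi$-adic analysis on $\det\pi(D)$ after identifying the cyclotomic field generated by its entries. Extending the scalar argument to these higher-dimensional representations, and reconciling the field they live in with the cyclic quotient $C/G$, is the second and genuinely harder obstacle.
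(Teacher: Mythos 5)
The paper itself contains no proof of this proposition: it is imported verbatim from Dillon \cite{dil} and later invoked in \Cref{tleq8}, so your attempt has to stand on its own, and it does not. Your abelian case is fine (granting the paper's reading of \Cref{exponentbound}, whose necessity direction is exactly Turyn's bound), and your setup for the nonabelian case with $u\notin G$ is correct: the pulled-back faithful character satisfies $\chi(u)=-1$, hence $|\chi(D)|^2=2^{2s+1}$, and the balance relations with $0\le c_j\le 2^{2s+2-a}$ are the right constraints. The gap is in how you propose to conclude. Write $\chi(D)=T_1+T_2$ with $T_1=2\sum_{j<2^{a-1}}c_j\zeta^j$ and $T_2=-2^{2s+2-a}\sum_{j<2^{a-1}}\zeta^j=-2^{2s+3-a}/(1-\zeta)$, where $\zeta=\zeta_{2^a}$, $\pi=1-\zeta$. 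Then $v_\pi(T_2)=(2s+3-a)2^{a-1}-1$ is odd while $v_\pi(\chi(D))=(2s+1)2^{a-2}$ is even (say $a\ge 3$), so parity only excludes $v_\pi(T_1)>v_\pi(T_2)$, leaving a disjunction rather than a contradiction: either $v_\pi(T_1)<v_\pi(T_2)$, which indeed forces $a\le s+2$; or $v_\pi(T_1)=v_\pi(T_2)$ and the leading parts cancel, a configuration that the valuation and parity bookkeeping permits for every $a\ge s+3$. Ruling out this cancellation branch is the theorem, and it is precisely the step you defer as ``delicate''. The missing ingredient is Kronecker's theorem, as in Turyn \cite{tur}: every Galois conjugate of $\chi(D)$ is again such a character sum, so all conjugates have modulus $2^{(2s+1)/2}$; hence $\chi(D)^2/2^{2s+1}$ is an algebraic integer (the ideals agree, $\pi$ being the unique prime above $2$) with all conjugates on the unit circle, so it is a root of unity, which forces $\chi(D)=\pm 2^s\zeta^k(1+i)$ with $i=\zeta^{2^{a-2}}$. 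Only then does your coefficient bound act: the coordinates of $\chi(D)$ in the power basis are $\pm2^s$ in two positions, so $2^s\le 2^{2s+2-a}$, i.e.\ $a\le s+2$.

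The second gap is larger: for nonabelian $C$ with $u\in G$ you offer only the suggestion to analyze $\det\Phi(D)$ for irreducible $\Phi$ with $\Phi(u)=-I$, and you flag it yourself as unresolved. Part of this case is reachable with tools you already have: if $u\notin[C,C]$, every cyclic quotient of $C$ factors through $A=C/[C,C]$, and since an abelian $2$-group is generated by its elements of maximal order, $A$ admits a character of order $\exp(A)\ge |C/G|$ taking the value $-1$ on $u$; its kernel is a normal subgroup of $C$ avoiding $u$ with cyclic quotient, so the previous case gives $|C/G|\le\exp(A)\le 2^{s+2}$. What your sketch genuinely cannot handle is $u\in[C,C]$, e.g.\ $C=C_{2^a}\times Q_8$ with $u$ the commutator involution of $Q_8$ (of order $2^{2s+2}$ for odd $a$), a family the proposition must exclude for large $a$: there no linear character of $C$ sees $u$ at all, and the determinant idea does not substitute for one, because $\det\Phi(D)$ is a sum of roughly $2^a$ products of pairs of entry sums, so its coordinates in the power basis carry no bound comparable to $|G|$ --- they are of the same order of magnitude as the target value $2^{2s+1}=4\cdot 2^a$ itself --- and the Kronecker-plus-coefficient comparison that closes the one-dimensional case yields nothing. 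That subcase needs input your proposal does not supply; it is exactly what the citation of \cite{dil} is covering for. (Incidentally, the paper's sole application, \Cref{tleq8}, has $C$ abelian with $u\in G$, so it falls under your first case; but the proposition is stated and used as a general fact.)
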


\begin{definition}\cite{rbh}
A binary code $C$ of length n has a propelinear structure if for each codeword $x \in C$ there exists $\pi_x\in S_n$ satisfying the following conditions for all $y \in C$:
\begin{itemize}
\item[(i)] $x+\pi_x(y) \in C$,
\item[(ii)] $\pi_x\pi_y=\pi_z$, where $z=x+\pi_x(y)$.
\end{itemize}
\end{definition}
Assuming $C$ has a propelinear strucure, for all $x,y \in C$, denote by $*$ the binary operation such that $x*y=x+\pi_x(y)$. Then, $(C,*)$ is a group, which is not abelian in general.
The vector $\be$ is always a codeword and $\pi_\be=I$ is the identity permutation. Hence, $\be$ is the identity element in $C$ and $x^{-1}=\pi^{-1}_x(x)$, for all $x\in C$. We call $(C,*)$ a \emph{propelinear code}.
Note that we can extend $*$ to $\F^n$ as an action of $C$ over $\F^n$, then for all $x\in C$ and for all $y\in \F^n$ we have that $x*y=x+\pi_x(y) \in \F^n$. Henceforth we use $xy$ instead of $x*y$ if there is no confusion.

An automorphism of a binary code $C$ is a permutation on the set of coordinates leaving the code invariant. We denote by $Aut(C)$ the set of all automorphisms of $C$.

\begin{proposition}\cite{mog}\label{aut}
Let $C$ be a propelinear code. Then:
\begin{enumerate}[(i)]
\item For $x\in C$ we have $x\in K(C)$ if and only if $\pi_x\in Aut(C)$.
\item The kernel $K(C)$ is a subgroup of $C$ and also a binary linear space.
\item If $c\in C$ then $\pi_c\in \Aut(K(C))$.
\end{enumerate}
\end{proposition}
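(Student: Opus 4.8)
The plan is to treat the three parts in order, using as the central tool the observation that the assignment $x \mapsto \pi_x$ is a group homomorphism from $(C,*)$ into $S_n$: condition (ii) of the definition of a propelinear code says precisely that $\pi_x\pi_y = \pi_{x*y}$, and specializing to the identity gives $\pi_\be = I$ together with $\pi_{x^{-1}} = \pi_x^{-1}$. These facts are what let me transfer statements about codewords into statements about permutations and back.

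For part (i), I would argue both implications directly. Suppose first $x \in K(C)$, so $C + x = C$. For every $y \in C$ the propelinear axiom gives $x*y = x + \pi_x(y) \in C$, and since adding $x$ preserves $C$ we get $\pi_x(y) = x + (x*y) \in C$; hence $\pi_x(C) \subseteq C$, and as $\pi_x$ is a bijection of the finite set $\F^n$ this forces $\pi_x(C) = C$, i.e. $\pi_x \in \Aut(C)$. Conversely, if $\pi_x \in \Aut(C)$ then $\pi_x$ restricts to a bijection of $C$, so every $z \in C$ can be written $z = \pi_x(y)$ with $y \in C$, whence $x + z = x + \pi_x(y) = x*y \in C$; thus $C + x \subseteq C$, and equality follows by comparing cardinalities.

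For part (ii), the linear-space claim is the standard one: if $C + z_1 = C$ and $C + z_2 = C$ then $C + (z_1+z_2) = (C + z_1) + z_2 = C$, and $\be \in K(C)$, so $K(C)$ is an $\F$-linear subspace of $\F^n$; moreover, for $z \in K(C)$ the assumption $\be \in C$ gives $z = \be + z \in C + z = C$, so $K(C) \subseteq C$. For the subgroup claim under $*$ I would invoke part (i): for $z_1,z_2 \in K(C)$ we have $\pi_{z_1},\pi_{z_2} \in \Aut(C)$, hence $\pi_{z_1*z_2} = \pi_{z_1}\pi_{z_2} \in \Aut(C)$ because $\Aut(C)$ is a group, and therefore $z_1*z_2 \in K(C)$ by part (i); inverses are handled identically through $\pi_{z^{-1}} = \pi_z^{-1} \in \Aut(C)$.

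Part (iii) is where the real work lies, and the crux is to identify how $\pi_c$ moves the whole code rather than trying to preserve $C$ (which fails unless $c \in K(C)$). I would first establish the identity $\pi_c(C) = c + C$: from $\pi_c(y) = c + (c*y)$ and the fact that $y \mapsto c*y$ is a bijection of $C$, the image $\pi_c(C)$ equals $\{c + w : w \in C\} = c + C$. Next I would note that translating a code by a fixed vector leaves its kernel unchanged, $K(c+C) = K(C)$, since $(c+C)+v = c+C$ is equivalent to $C + v = C$. Finally, applying the linear bijection $\pi_c$ to the defining relation $C + z = C$ of an arbitrary $z \in K(C)$ yields $\pi_c(C) + \pi_c(z) = \pi_c(C)$, that is $\pi_c(z) \in K(\pi_c(C)) = K(c+C) = K(C)$; since $\pi_c$ is linear and injective it maps the subspace $K(C)$ into itself and hence onto by equality of dimensions, giving $\pi_c \in \Aut(K(C))$. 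The main obstacle is precisely spotting this two-step reduction — $\pi_c(C) = c+C$ followed by translation-invariance of the kernel — instead of attempting to prove that $\pi_c$ fixes $C$ or that $K(C)$ is normal in $(C,*)$, neither of which is available from $\pi_c$ alone.
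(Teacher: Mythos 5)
Your proof is correct in all three parts; note, however, that the paper does not prove this proposition at all --- it is quoted from the reference \cite{mog} (Borges, Mogilnykh, Rif\`a, Solov'eva) --- so there is no internal proof to compare against. Your route is the standard one and every step checks out: in (i) the finiteness/cardinality arguments close both inclusions properly; in (ii) deducing closure of $K(C)$ under $*$ and inverses from part (i) via the homomorphism $x \mapsto \pi_x$ is exactly right; and in (iii) the two identities $\pi_c(C) = c + C$ (from $\pi_c(y) = c + (c*y)$ and bijectivity of left translation in $(C,*)$) and $K(c+C) = K(C)$, combined with linearity of coordinate permutations, give $\pi_c(K(C)) = K(C)$ cleanly. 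This is essentially the argument of the cited source, so nothing further is needed.
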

Let $x$ be in a propelinear code, we denote the element $x * x^{i-1}$ by $x^i$, with $x^1=x$, for any $i > 1$.
\begin{lemma}\label{xi}
Let $C$ be a propelinear code. Then $x^i=x+\pi_x(x)+\ldots+\pi_x^{i-1}(x)$, for all $x\in C$.
\end{lemma}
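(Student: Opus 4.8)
The plan is to prove the identity by induction on $i$, using only the definition of the power $x^i$, the definition of the group operation $*$ (namely $x*y=x+\pi_x(y)$), and the elementary fact that a coordinate permutation acts linearly on $\F^n$. For the base case $i=1$, the claimed right-hand side consists of the single term $\pi_x^0(x)=x$, which matches $x^1=x$ by definition, so there is nothing to check.

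For the inductive step I would assume the formula holds for $i-1$, that is, $x^{i-1}=x+\pi_x(x)+\cdots+\pi_x^{i-2}(x)$, and then expand $x^i=x*x^{i-1}=x+\pi_x(x^{i-1})$ directly from the definition of $*$. The crucial observation is that $\pi_x$, being a permutation of the $n$ coordinates, is an $\F$-linear map on $\F^n$; hence it distributes over the sum, giving $\pi_x(x^{i-1})=\pi_x(x)+\pi_x^2(x)+\cdots+\pi_x^{i-1}(x)$. Substituting this back yields $x^i=x+\pi_x(x)+\pi_x^2(x)+\cdots+\pi_x^{i-1}(x)$, which is exactly the claimed formula for $i$, completing the induction.

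There is no serious obstacle here: the only point that genuinely requires care is the $\F$-linearity of $\pi_x$ on $\F^n$, which is what allows the permutation to pass through the binary sum; once that is in hand, the remainder is a mechanical telescoping of indices. One could alternatively emphasize the companion identity $\pi_{x^{i}}=\pi_x^{i}$, which follows from condition (ii) of the propelinear structure together with a parallel induction, but this is not needed to establish the stated formula, so I would at most relegate it to a remark.
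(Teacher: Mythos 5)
Your proof is correct and follows essentially the same route as the paper: induction on $i$, expanding $x^i=x*x^{i-1}=x+\pi_x(x^{i-1})$ and using the $\F$-linearity of the coordinate permutation $\pi_x$ to distribute it over the sum (a point the paper uses implicitly and you rightly make explicit).
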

\begin{proof}
We proceed by induction on $i$. The base case is $x^2=x+\pi_x(x)$. Let us see the inductive step, $x^i=x*x^{i-1}=x+\pi_x(x^{i-1})=x+\pi_x(x+\pi_x(x)+\ldots+\pi_x^{i-2}(x))=x+\pi_x(x)+\ldots+\pi_x^{i-1}(x)$.
\end{proof}

\begin{definition}\cite{rs}
An Hadamard full propelinear code ($\HFP$-code) is an Hadamard propelinear code $C$ such that for every $\ba \in C$, $\ba \neq \be$, $\ba \neq \bu$, the permutation $\pi_\ba$ has not any fixed coordinate and $\pi_\be = \pi_\bu = I$.
\end{definition}

\begin{definition}
The associated group $\Pi$ of a propelinear code $C$ is the set of permutations of all elements of $C$, $\{\pi_x\in S_n : x\in C\}$, which is a group with the composition.
\end{definition}

\begin{proposition}\label{perm}
Let $C$ be an Hadamard full propelinear code. Then $\bu \in K(C)$ and the associated group of C is isomorphic to $C/\langle \bu \rangle$.
\end{proposition}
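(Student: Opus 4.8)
The plan is to establish the two assertions separately, deriving the claimed isomorphism from the first isomorphism theorem applied to the map that assigns to each codeword its associated permutation.

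For the inclusion $\bu \in K(C)$, I would appeal to \Cref{aut}(i), according to which a codeword $x$ lies in $K(C)$ exactly when $\pi_x \in \Aut(C)$. Since $C$ is an $\HFP$-code, its defining conditions give $\pi_\bu = I$, and the identity permutation is trivially an automorphism of $C$. As $\bu \in C$ (it is the complement $\bar{\be}$ of $\be \in C$, and an Hadamard code is closed under complementation), this immediately yields $\bu \in K(C)$.

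For the isomorphism, I would consider the map $\phi\colon C \to \Pi$ given by $\phi(x)=\pi_x$. Condition (ii) of the propelinear structure, $\pi_x\pi_y=\pi_{x*y}$, shows that $\phi$ is a group homomorphism, and it is surjective by the very definition of the associated group $\Pi$. By the first isomorphism theorem it then suffices to prove that $\ker\phi=\langle\bu\rangle$, after which $\Pi\cong C/\langle\bu\rangle$ follows at once.

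The heart of the argument is this last identification of $\ker\phi=\{x\in C:\pi_x=I\}$. One inclusion is easy: because $\pi_\bu=I$ we get $\bu*\bu=\bu+\pi_\bu(\bu)=\be$, so $\bu$ is an involution and $\langle\bu\rangle=\{\be,\bu\}\subseteq\ker\phi$. For the reverse inclusion I would use the defining fixed-point-free property of an $\HFP$-code: if $x\neq\be,\bu$ then $\pi_x$ has no fixed coordinate, whereas $\pi_x=I$ fixes every coordinate; hence any $x\in\ker\phi$ must equal $\be$ or $\bu$. I expect no serious obstacle here — the only points demanding care are checking that $\phi$ is genuinely a homomorphism (which is precisely propelinear axiom (ii)) and recognizing that the fixed-point-free hypothesis is exactly what forces the kernel to be as small as possible.
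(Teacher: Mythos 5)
Your proof is correct and follows essentially the same route as the paper: the same homomorphism $x \mapsto \pi_x$, the same identification of its kernel with $\langle \bu \rangle$ via the fixed-point-free property of an $\HFP$-code, and the first isomorphism theorem. The only (harmless) deviation is the first claim: the paper shows $\bu \in K(C)$ by the direct computation $x+\bu = x+\pi_x(\bu) = x*\bu \in C$ for every $x \in C$, whereas you invoke \Cref{aut}(i) together with $\pi_\bu = I$; both are valid one-line arguments.
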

\begin{proof}
Let $x$ be a codeword in $C$. Since $x+\bu  = x+\pi_x(\bu)=x * \bu \in C$, we have $\bu \in K(C)$. Let $\varphi :C \rightarrow \Pi$ be the mapping given by $\varphi(x)=\pi_x$ for all $x\in C$. As $\pi_{x * y}=\pi_x\pi_y$ for all $x,y\in C$, we have that the mapping $\varphi$ is a group homomorphism. Since $C$ is full propelinear, the kernel of this homomorphism is $\langle \bu \rangle$. Thus $C/\langle \bu \rangle \simeq \varphi(C)=\Pi$.
\end{proof}

\begin{definition}
An extension of a group $H$ by a group $N$ is a group $G$ with a normal subgroup $M$ such that $M \simeq N$ and $G / M \simeq H$. This information can be encoded into a short exact sequence of groups
\vspace{-0.3cm}

$$1\rightarrow N \rightarrow G \rightarrow H \rightarrow 1.$$
\end{definition}

Let $C$ be an Hadamard full propelinear code with associated group $\Pi$, and from \Cref{perm} we have $C/ \Pi \simeq C_2$. Thus, an Hadamard full propelinear code is a central extension of the associated group by the cyclic group of order 2. The search for Hadamard full propelinear codes is an extension problem with the following short exact sequence
\vspace{-0.3cm}

$$1\rightarrow C_2 \rightarrow C \rightarrow \Pi \rightarrow 1.$$

\begin{lemma}\cite{rs}\label{cyclic}
Let $(C,*)$ be an Hadamard propelinear code of length 4t. Then
C is not a cyclic group of order 8t.
\end{lemma}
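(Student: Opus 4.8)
The plan is to argue by contradiction. Suppose $(C,*)$ is cyclic of order $8t$, say $C=\langle x\rangle$. First I would locate the all-one vector. By \Cref{perm} we have $\bu\in K(C)$, and since $\bu*\bu=\bu+\pi_\bu(\bu)=\bu+\bu=\be$ with $\bu\neq\be$, the element $\bu$ has order $2$ in $(C,*)$. A cyclic group of order $8t$ has a unique involution, so necessarily $\bu=x^{4t}$. Passing to the associated group, \Cref{perm} gives $\Pi\cong C/\langle\bu\rangle\cong C_{8t}/\langle x^{4t}\rangle\cong C_{4t}$; since $\Pi$ is a subgroup of $S_{4t}$ generated by $\sigma:=\pi_x$, the permutation $\sigma$ has order exactly $4t$.

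The heart of the argument, and the step I expect to be the main obstacle, is to show that $\sigma$ must be a single $4t$-cycle on the $4t$ coordinates. Here I would use the full-propelinear hypothesis (that $\pi_\ba$ fixes no coordinate for $\ba\neq\be,\bu$) in an essential way: for every $i\in\{1,\dots,4t-1\}$ we have $x^i\notin\{\be,\bu\}$, because $x$ has order $8t$ and $x^{4t}=\bu$, so the permutation $\pi_{x^i}=\sigma^i$ has no fixed coordinate. Now if $\sigma$ were not a single cycle, its cycle decomposition would contain some cycle of length $\ell$ with $1\le \ell\le 4t-1$; then $\sigma^\ell=\pi_{x^\ell}$ would fix every point of that cycle, contradicting that $\pi_{x^\ell}$ is fixed-point-free. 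Hence $\sigma$ is a single $4t$-cycle. Phrased representation-theoretically, the non-identity elements of $\Pi$ act without fixed points, so $\Pi$ acts semiregularly on the $4t$ coordinates; as $|\Pi|=4t$ equals the number of coordinates, the action is in fact regular, which for a generator of a cyclic group forces a single full cycle.

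Finally I would compute $x^{4t}$ directly. By \Cref{xi}, $x^{4t}=\sum_{k=0}^{4t-1}\sigma^k(x)$. Because $\sigma$ is a single $4t$-cycle, for each coordinate $j$ the set $\{\sigma^{-k}(j):0\le k\le 4t-1\}$ runs through all $4t$ coordinates exactly once, so the $j$-th entry of this sum equals $\sum_{j'} x_{j'}=\wt(x)\bmod 2$. Since $x\neq\be,\bu$, its weight is $2t$ (every codeword of a length-$4t$ Hadamard code other than $\be$ and $\bu$ has weight $2t$), which is even; therefore every coordinate of $x^{4t}$ is $0$, i.e.\ $x^{4t}=\be$. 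This contradicts $x^{4t}=\bu\neq\be$, completing the proof. The only delicate point is the cycle-structure claim; once the single-cycle property is in hand, the contradiction is a one-line parity computation.
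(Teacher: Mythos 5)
The paper never proves this lemma itself --- it is imported wholesale from \cite{rs} --- so there is no internal proof to compare against, and your argument has to stand on its own. As written, it proves strictly less than what is asserted. The lemma is stated for Hadamard \emph{propelinear} codes, but the step you yourself single out as the heart of the argument (every $\pi_{x^i}$ with $x^i\notin\{\be,\bu\}$ is fixed-point free, hence $\sigma=\pi_x$ is a single $4t$-cycle) is exactly the \emph{full} propelinear hypothesis, which is not among the assumptions; your appeal to \Cref{perm} has the same defect, since that proposition is also stated only for full propelinear codes. What survives without fullness is the identification $\bu=x^{4t}$ (indeed $\pi_\bu(\bu)=\bu$ for any permutation, so $\bu*\bu=\be$, and $C_{8t}$ has a unique involution) and the identity $x^{4t}=\sum_{k=0}^{4t-1}\sigma^k(x)$ from \Cref{xi}. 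But the closing parity computation then collapses: if $\sigma$ has an orbit $O$ of length $\ell$ with $\ell\mid 4t$, the coordinates of $x^{4t}$ lying in $O$ all equal $(4t/\ell)\cdot\wt(x|_O)\bmod 2$, which is $1$ whenever $4t/\ell$ and $\wt(x|_O)$ are both odd; so a $\sigma$ with short orbits is perfectly compatible, coordinate by coordinate, with $x^{4t}=\bu$, and no contradiction appears. The extra generality is not pedantic in this paper: \Cref{cyclic} is applied to groups generated by \emph{projections} of codewords (\Cref{b-4tu2}, \Cref{eu-4tu2}), and the authors warn explicitly (proof of \Cref{1010circulant}) that such projected codes are ``not necessarily full propelinear''.

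The gap is repairable, because your skeleton (unique involution, \Cref{xi}, parity) does extend; what must be replaced is the single-cycle claim by an orbit count. Write $4t=2^st'$ with $t'$ odd. Every $\sigma$-orbit length divides $8t$, since $\sigma^{8t}=\pi_{x^{8t}}=\pi_{\be}=I$. Requiring every coordinate of $x^{4t}$ to equal $1$ forces each orbit $O$, of length $\ell$, into one of two kinds: either $\ell=2^sm$ with $m\mid t'$ and $\wt(x|_O)$ \emph{odd} (this is your computation, done on one orbit instead of globally), or $\ell=2^{s+1}m$ with $m\mid t'$ and $\wt(x|_O)$ \emph{even} (for these, $4t$ steps wrap the orbit a half-integer number of times; comparing the two ``half-sums'' along the orbit forces the even weight). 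Summing orbit lengths and dividing by $2^s$ gives $t'=\sum_A m_i+2\sum_B m_j$ with all $m_i$ odd, so the number of orbits of the first kind is odd; summing weights gives $2t=\wt(x)\equiv(\hbox{number of orbits of the first kind})\pmod 2$, so that number is even --- a contradiction, with no fullness used anywhere. Until an argument of this sort (or a reduction of the propelinear case to the full one) is supplied, your proposal establishes only the $\HFP$ special case of the lemma.
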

A $n \times n$ matrix whose rows are cyclic shift versions of a vector $c\in \F^n$ is called \textit{circulant matrix}. The binary code consisting of the rows of a binary circulant normalized Hadamard matrix and their complements is called a circulant Hadamard code.
\begin{lemma}\cite{rifacirculant}\label{rifacirculant}
Let $C$ be a circulant Hadamard code of length $4t$, then $C$ is an $\HFP$-code with a cyclic associated group $\Pi$ of order $4t$. Vice versa, an $\HFP$-code $C$ with a cyclic associated group $\Pi$ of order $4t$ is an Hadamard circulant code of length $4t$.
\end{lemma}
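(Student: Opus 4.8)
The plan is to prove the two implications separately, using the cyclic shift $\sigma:i\mapsto i+1\pmod{4t}$ on the coordinates as the bridge between the circulant matrix and the propelinear structure. For the forward implication, start from a circulant Hadamard matrix whose rows are $\sigma^i(c)$ for $0\le i<4t$, pass to the associated Hadamard code $\{\sigma^i(c)\}\cup\{\sigma^i(c)+\bu\}$, and translate by $c$ to obtain the equivalent code $C=\{c+\sigma^i(c)\}\cup\{c+\sigma^i(c)+\bu\}$, which now contains $\be$ and $\bu$. I would then set $\pi_x=\sigma^i$ for the two codewords carrying index $i$ and verify the propelinear axioms by the direct computation $x+\pi_x(y)=c+\sigma^{i+j}(c)\,(+\bu)$ when $x,y$ carry indices $i,j$; the homomorphism law then reads $\sigma^i\sigma^j=\sigma^{i+j}$. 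Since no two rows of a Hadamard matrix are complementary, the index and type of each codeword are well defined; since $\sigma$ is a single $4t$-cycle, each $\sigma^i$ with $i\not\equiv0$ is fixed-point-free while $\pi_\be=\pi_\bu=I$, so $C$ is an $\HFP$-code whose associated group is $\langle\sigma\rangle\cong C_{4t}$.

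For the converse, let $C$ be an $\HFP$-code with cyclic associated group $\Pi=\langle\sigma\rangle$ of order $4t$. The first step is to pin down $\sigma$: every $\sigma^k$ with $k\not\equiv0$ equals $\pi_x$ for some $x\neq\be,\bu$ and is therefore fixed-point-free, so $\langle\sigma\rangle$ acts semiregularly on the $4t$ coordinates; as the group order equals the number of points, the action is regular, whence $\sigma$ is a single $4t$-cycle and, after relabelling coordinates, the standard shift. Pick $\ba\in C$ with $\pi_\ba=\sigma$. From $\pi_{\ba^{4t}}=\sigma^{4t}=I$ we get $\ba^{4t}\in\langle\bu\rangle$; the case $\ba^{4t}=\bu$ would make $C=\langle\ba\rangle$ cyclic, contradicting \Cref{cyclic}, so $\ba^{4t}=\be$, the order of $\ba$ is exactly $4t$, and $\langle\ba\rangle\cong C_{4t}$. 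Since $\bu$ is central and $\bu\notin\langle\ba\rangle$ (the restriction of $x\mapsto\pi_x$ to $\langle\ba\rangle$ is injective), this gives $C=\langle\ba\rangle\sqcup\langle\ba\rangle\bu$.

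The heart of the converse is to recover the generating vector $c$. Because $\ba\neq\be,\bu$ lies in a Hadamard code, $\wt(\ba)=2t$ is even, so the $\F_2$-linear equation $(I+\sigma)(c)=\ba$ is solvable, the image of $I+\sigma$ being exactly the even-weight hyperplane. Fixing such a $c$ and using $\ba^{i+1}=\ba+\sigma(\ba^i)$ from \Cref{xi}, an induction matching this recursion yields $\ba^i=c+\sigma^i(c)$, equivalently $\sigma^i(c)=c+\ba^i$, so the rows $\sigma^i(c)$ are distinct. Hence the translate $c+C=\{\sigma^i(c)\}\cup\{\sigma^i(c)+\bu\}$ is precisely the code of the circulant matrix with rows $\sigma^i(c)$, and this matrix is Hadamard since $d_H(\sigma^i(c),\sigma^j(c))=\wt(\ba^{j-i})=2t$ for $i\neq j$. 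Therefore $C$ is equivalent to a circulant Hadamard code, completing the converse.

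I expect the main obstacle to be this bridge from the abstract group $\langle\ba\rangle$ to a genuinely circulant matrix: the subgroup codewords $\ba^i$ are \emph{not} cyclic shifts of one another, and the decisive trick is solving $(I+\sigma)(c)=\ba$ and translating by $c$, which simultaneously converts the partial-sum codewords $\ba^i$ into the shifts $\sigma^i(c)$ and certifies orthogonality through the constant distance $2t$. A secondary point requiring care is bookkeeping of the translation relating the $\be$-containing $\HFP$ representative, which is not shift-invariant, to the shift-invariant raw circulant code.
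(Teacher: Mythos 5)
The paper never proves this lemma: it is imported verbatim from the cited preprint \cite{rifacirculant}, so there is no in-paper argument to compare yours against; your proposal has to be judged on its own, and it stands up. The forward direction (translate the circulant code by its generating row $c$, assign $\pi_x=\sigma^i$ to both codewords of index $i$, verify the two propelinear axioms from $\sigma^i\sigma^j=\sigma^{i+j}$, and get fixed-point-freeness because nontrivial powers of a $4t$-cycle fix nothing) is the natural construction, and it also meshes with the paper's definition of a circulant Hadamard code: normalizing the binary circulant matrix is exactly translation by $c$, so your ``equivalent'' code is the paper's code on the nose. The converse is where the content lies, and your three steps are sound: fixed-point-freeness of every $\pi_x$ with $x\notin\{\be,\bu\}$ forces the cyclic associated group to act semiregularly, hence regularly, so $\sigma$ is a single $4t$-cycle; the alternative $\ba^{4t}=\bu$ would make $C$ cyclic of order $8t$, which \Cref{cyclic} forbids, so $C=\langle\ba\rangle\times\langle\bu\rangle$ with $\ba$ of order $4t$; and the equation $(I+\sigma)(c)=\ba$ is solvable because $\ker(I+\sigma)=\{\be,\bu\}$ makes the image an $(4t-1)$-dimensional subspace of the even-weight hyperplane, hence equal to it, while $\wt(\ba)=2t$ is even. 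The induction $\ba^i=c+\sigma^i(c)$ via \Cref{xi} then turns the abstract powers into shifts, and the distance computation $d_H(\sigma^i(c),\sigma^j(c))=\wt(\ba^{j-i})=2t$ (legitimate because $\ba^{j-i}\neq\be,\bu$, as $\bu\notin\langle\ba\rangle$) certifies the Hadamard property of the resulting circulant matrix. The one caveat you flag yourself --- that the statement must be read up to equivalence/relabelling, since the raw circulant code does not contain $\be$ --- is real but is consistent with the paper's conventions and with its definition of circulant Hadamard code via the normalized matrix. In short: a correct, self-contained proof of a statement the paper only cites, with the solvability-of-$(I+\sigma)(c)=\ba$ argument being the step that does the work the cited reference is invoked for.
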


\begin{lemma}\cite{rifacirculant}\label{rifacirculantk1}
Let $C$ be a nonlinear circulant Hadamard code of length $4t$. Then the dimension of the kernel is $k=1$.
\end{lemma}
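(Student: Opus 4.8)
The plan is to fix the group structure, reduce the claim to a statement about which cyclic shifts are automorphisms, and then attack the propagation step, which is the real difficulty.

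First I would record the basic structure. By \Cref{rifacirculant}, $C$ is an $\HFP$-code whose associated group $\Pi$ is cyclic of order $4t$, generated by the coordinate cyclic shift $\sigma$; fix $a\in C$ with $\pi_a=\sigma$. By \Cref{perm} we have $\bu\in K(C)$, so $k\ge 1$, and $C/\langle\bu\rangle\simeq\Pi\simeq C_{4t}$. Since $\bu$ is central (as $\pi_\bu=I$) and the quotient is cyclic, $C$ is abelian; being of order $8t$ with cyclic quotient $C_{4t}$ and not cyclic by \Cref{cyclic}, it must be that $C\simeq C_{4t}\times C_2$, generated by $a$ (of order $4t$) and $\bu$. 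Thus $K(C)$ is a subgroup containing $\langle\bu\rangle$, and proving $k=1$ amounts to proving $K(C)=\langle\bu\rangle$.

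Next I would translate the problem into shifts. From \Cref{xi} and the group law $a^j=a+\sigma(a^{j-1})$ one gets $\sigma^j(C)=a^j+C$, so by \Cref{aut}(i) we have $\sigma^j\in\Aut(C)$ iff $a^j\in K(C)$. The set $\{j\in\Z_{4t}:\sigma^j\in\Aut(C)\}$ is a subgroup of $\Z_{4t}$, hence equals $m_0\Z_{4t}$ for a unique divisor $m_0\mid 4t$; then $K(C)=\langle a^{m_0},\bu\rangle$ and $k=1$ is equivalent to $m_0=4t$. The main leverage comes from \Cref{aut}(iii): every $\pi_c$, in particular $\sigma=\pi_a$, normalizes $K(C)$, so $K(C)$ is invariant under the full cyclic shift, i.e. $K(C)$ is a binary cyclic code. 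Being a linear subcode of the Hadamard code $C$, all its nonzero codewords have weight $2t$, except $\bu$ of weight $4t$.

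Then comes the crux: assuming $k\ge 2$ (equivalently $m_0<4t$), I would derive that $C$ is linear, contradicting nonlinearity. Using \Cref{xi} one has $\sigma^l(a^{m_0})=a^l+a^{l+m_0}$, and since $K(C)$ is cyclic these all lie in $K(C)$; thus $a^{l}+a^{l+m_0}\in K(C)$ for all $l$. Together with the fact that $K(C)$ is a cyclic code whose nonzero words have the rigid weight $2t$, this pins down $K(C)$ very strongly — for instance in the smallest case $K(C)=\langle\bomega,\bu\rangle$ the shift relation $\sigma(\bomega)=\overline{\bomega}$ forces $\bomega$ to be the alternating vector $(1,0,1,0,\dots)$. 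I would then feed this back into the circulant description $a^i=\sigma^i(v_0)+v_0$, in which the Hadamard condition reads $\wt(a^s)=2t$ for every $s\not\equiv 0$, i.e. the rows of a circulant Hadamard matrix; the goal is to show that a proper shift symmetry $\sigma^{m_0}\in\Aut(C)$ must already force the unit shift $\sigma\in\Aut(C)$, whence $a\in K(C)$ and $C=\langle a,\bu\rangle\subseteq K(C)$ is linear.

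I expect this last propagation step — upgrading $\sigma^{m_0}\in\Aut(C)$ with $m_0<4t$ to $\sigma\in\Aut(C)$ — to be the main obstacle. The additive relations $a^{l}+a^{l+m_0}\in K(C)$ alone are self-consistent and do not collapse the code; what must be exploited is the Hadamard (equivalently relative-difference-set) constraint $\wt(a^s)=2t$ for all $s\ne 0$, which controls how the shift-invariant kernel interacts with the non-shift-invariant code $C$ (note $\sigma(C)=a+C\ne C$ in general). It is precisely this rigidity, absent in arbitrary codes carrying an intermediate cyclic kernel, that should rule out a proper $m_0$ and yield $k=1$.
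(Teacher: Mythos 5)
Your structural reduction is correct and consistent with the paper's framework: via \Cref{rifacirculant}, \Cref{perm} and \Cref{cyclic} one does get $C=\langle \ba\rangle\times\langle\bu\rangle\simeq C_{4t}\times C_2$ with $\pi_\ba=\sigma$ the full $4t$-cycle; by \Cref{aut}(i) the kernel is $\langle \ba^{m_0},\bu\rangle$ for a divisor $m_0$ of $4t$; by \Cref{aut}(iii) the kernel is $\sigma$-invariant, hence a classical cyclic code all of whose nonzero words other than $\bu$ have weight $2t$; the relation $\sigma^l(\ba^{m_0})=\ba^l+\ba^{l+m_0}$ and the identification ``$\ba\in K(C)$ iff $C$ is linear'' are also correct.

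The problem is that the argument stops exactly where the lemma begins. The implication you leave open --- that $m_0<4t$ (i.e.\ $k\geq 2$) forces $\sigma\in\Aut(C)$, equivalently forces $C$ linear --- is not a ``propagation step'' inside a proof; given your (correct) setup it is logically equivalent to the statement of the lemma itself, and you explicitly defer it as a ``goal'' and ``main obstacle''. As you yourself note, every fact you have actually established (cyclicity of $K(C)$, the relations $\ba^l+\ba^{l+m_0}\in K(C)$, the alternating shape of the kernel word when $k=2$) is also satisfied by the linear length-$4$ code, so no contradiction can follow from them alone; the Hadamard weight rigidity must enter, and it never does. A completion would need a concrete mechanism: for instance, when $k=2$ your analysis gives $m_0=2t$ and $\ba^{2t}\in\{\bomega_{4t},\bomega_{4t}+\bu\}\subseteq K(C)$, and one would then project $C$ onto the support of this kernel word (\Cref{parseval2}) and extract from the even powers $\ba^{2i}$ a cyclic Hadamard code of length $2t$, contradicting \Cref{cyclic} for $t>1$ --- the same device the paper uses in \Cref{4tu2} (compare \Cref{b-4tu2} and \Cref{eu-4tu2}) --- while larger $k$ would require the replicated-simplex/non-cyclic extended Hamming argument with which the paper bounds $k\leq 3$ in \Cref{C2tC2}. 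Note finally that the paper itself gives no proof of this lemma (it imports it from \cite{rifacirculant}), so your attempt has to stand on its own; as written it is an accurate reformulation plus a plan, not a proof.
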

\begin{remark}\label{remarkTuryn}
No circulant Hadamard matrices of order larger than $4$ has ever been found, but the nonexistence is still a non proven result.  The main work on Ryser's conjecture \cite{rys} about circulant Hadamard matrices seems to be due to Turyn~\cite{tur}. He showed that a circulant Hadamard matrix of length $4t$ fulfils that $t$ is an odd square.
\end{remark}
\begin{conjecture}\label{circHadConj}
There do not exist circulant Hadamard codes of length $4t$ for $t>1$.
\end{conjecture}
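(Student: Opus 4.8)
This statement is Ryser's conjecture in disguise: by Lemma~\ref{rifacirculant} a circulant Hadamard code of length $4t$ exists precisely when a circulant Hadamard matrix of order $4t$ exists, and the nonexistence of the latter for $t>1$ is a celebrated open problem. I therefore do not expect to give a complete proof; what I can realistically do is describe the line of attack that produces the known partial results and point to exactly where it stalls.

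The plan is to pass to the matrix side and exploit the spectral structure of circulant matrices. Let $H$ be a circulant Hadamard matrix of order $n=4t$ with first row $c=(c_0,\dots,c_{n-1})$, $c_j\in\{1,-1\}$, and set $\omega=e^{2\pi i/n}$. The eigenvalues of $H$ are $\lambda_j=\sum_{k=0}^{n-1}c_k\omega^{jk}$ for $0\le j<n$, and from $HH^T=nI_n$ one obtains $|\lambda_j|^2=n=4t$ for every $j$. Each $\lambda_j$ is an algebraic integer of absolute value $2\sqrt{t}$ in the cyclotomic field $\Q(\omega)$; in particular $\lambda_0=\sum_k c_k$ is a rational integer, so $\lambda_0=\pm 2\sqrt{t}$ and $t$ is forced to be a perfect square. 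The first step is thus to convert the combinatorial problem into the arithmetic question of when $4t$ factors as $\lambda_j\overline{\lambda_j}$ with all $\lambda_j$ of this special character-sum form.

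Next I would push the algebraic number theory further. Studying how the rational prime divisors of $t$ split in $\Q(\omega)$, and using self-conjugacy together with Gauss-sum identities, sharpens the square condition to Turyn's conclusion that $t$ must be an odd square (see Remark~\ref{remarkTuryn}). The same circle of ideas---the multiplier theorem for the associated cyclic difference set and Schmidt-type field-descent bounds on the modulus of character sums over $\Q(\omega)$---eliminates many individual values of $t$ and large ranges of $n$.

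The main obstacle, and the reason the statement is posed as a conjecture rather than a theorem, is that these arithmetic constraints, although severely restrictive, have never been shown to be jointly contradictory for all $t>1$. No known choice of prime ideals or descent inequality rules out the entire infinite family of admissible odd squares at once, so closing the gap appears to require a genuinely new idea rather than a refinement of the existing character-sum machinery. Accordingly I would record the equivalence with the circulant matrix problem, as the paper does, and leave the full nonexistence open.
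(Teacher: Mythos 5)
Your assessment is exactly right: the statement is posed in the paper as a conjecture (equivalent, via Lemma~\ref{rifacirculant}, to Ryser's circulant Hadamard matrix conjecture), and the paper offers no proof of it, recording only Turyn's partial result that $t$ must be an odd square (Remark~\ref{remarkTuryn}). Your sketch of the character-sum and field-descent machinery, and your identification of precisely where it stalls, matches the paper's framing, so there is nothing to correct.
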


\begin{proposition}\label{HDS}
Let $C$ be an Hadamard full propelinear code of length $4t$. If $C \simeq G \times \langle \bu \rangle$, then there exists an Hadamard difference set in $G$.
\end{proposition}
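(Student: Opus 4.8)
The plan is to transport the problem to Hadamard groups and then use the splitting $C\simeq G\times\langle\bu\rangle$ to project a relative difference set down to $G$. First I would recall that, by the Rif\`a--Su\'arez equivalence between $\HFP$-codes and Hadamard groups, the code $C$ of length $4t$ is an Hadamard group of order $8t$ in the sense of \Cref{hg}, whose central involution is $\bu$ and whose Hadamard subset $D$ is a transversal of $\langle\bu\rangle$ with $|D|=4t$. The first real task is to reread the two axioms of \Cref{hg} as a difference set statement: axiom (ii) with $a=\be$ forces $D$ to meet each pair $\{b,b\bu\}$ exactly once, so that no quotient $d_1d_2^{-1}$ of distinct elements of $D$ lies in $\langle\bu\rangle$ (if $d_1d_2^{-1}=\bu$ then $d_2$ and $d_2\bu$ would both lie in $D$); and axiom (i), once $|D\cap Da|$ is rewritten as the number of representations of $a$ as a quotient of two elements of $D$, says that every $a\notin\langle\bu\rangle$ is represented exactly $2t$ times. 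Thus $D$ is a relative $(4t,2,4t,2t)$-difference set in $C$ relative to $\langle\bu\rangle$.

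Next I would invoke the hypothesis $C\simeq G\times\langle\bu\rangle$. Identifying each codeword with a pair $(g,\varepsilon)$, $g\in G$ and $\varepsilon\in\Z_2\simeq\langle\bu\rangle$, the transversal $D$ becomes the graph $D=\{(g,\phi(g)):g\in G\}$ of a single map $\phi:G\to\Z_2$; the existence of a complement $G$ to $\langle\bu\rangle$ is exactly what allows $D$ to be encoded by one function on $G$. Since $\bu$ is central the second coordinate of a product is additive, so $(g_1,\phi(g_1))(g_2,\phi(g_2))^{-1}=(g_1g_2^{-1},\phi(g_1)-\phi(g_2))$, and the difference set property of $D$ translates into the following statement: for every $h\neq e$ in $G$ the $4t$ factorizations $g_1g_2^{-1}=h$ split evenly according to whether $\phi(g_1)=\phi(g_2)$ or not, i.e. $|\{g:\phi(hg)=\phi(g)\}|=2t$.

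Rephrasing this with $f=(-1)^{\phi}:G\to\{\pm1\}$, the condition reads $\sum_{g}f(hg)f(g)=0$ for all $h\neq e$; that is, $f$ has vanishing autocorrelation at every nonzero shift, equivalently $ff^{(-1)}=4t\cdot e$ in the group ring $\Z[G]$. Writing $f=G-2S$ with $S=\phi^{-1}(1)$ (where $G$ and $S$ denote the corresponding group-ring sums), this becomes $SS^{(-1)}=(k-\lambda)e+\lambda G$, which is the group-ring form of the assertion that $S$ is a $(4t,k,\lambda)$-difference set. Summing the autocorrelation over all shifts gives $(\sum_g f(g))^2=4t$, so $4t$ is a perfect square (in accordance with \Cref{ordersquare}) and the parameters are pinned to the Menon values $k=2t\mp\sqrt t$, $\lambda=t\mp\sqrt t$. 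Hence $G$ contains a Hadamard difference set, as claimed.

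I expect the most delicate point to be the bookkeeping in the two translations, and above all keeping the left/right conventions straight when $G$ is non-abelian: \Cref{hg}(i) is phrased through the right translate $Da$, whereas the difference set is defined through the quotient $d_1d_2^{-1}$, so one must check that uniformity of the representation function is preserved under $a\mapsto a^{-1}$ before projecting. The projection itself is where the direct-product hypothesis is indispensable: without a complement to $\langle\bu\rangle$ the set $D$ remains only a relative difference set inside $C$ and need not descend to a genuine difference set on $G$.
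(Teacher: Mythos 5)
Your proof is correct, and it is substantially more self-contained than the paper's. The paper disposes of this proposition in two citations: by \cite{rs}, the $\HFP$-code $C$ is an Hadamard group whose Hadamard subset $D$ corresponds to the central involution $\bu$, and then the splitting step --- that an Hadamard group of the form $G \times \langle \bu \rangle$ yields an Hadamard difference set in $G$ --- is quoted directly from Ito \cite{itoII}. You perform the same first reduction, but then you actually prove Ito's splitting theorem rather than cite it: you reread \Cref{hg} as saying that $D$ is a relative $(4t,2,4t,2t)$-difference set in $C$ relative to $\langle \bu \rangle$, use the complement $G$ to realize $D$ as the graph of a map $\phi : G \to \Z_2$, and convert the relative-difference-set condition into vanishing autocorrelation of $f=(-1)^{\phi}$, i.e.\ $ff^{(-1)}=4t\cdot e$ in $\Z[G]$, whence $S=\phi^{-1}(1)$ is a Menon difference set with parameters $\bigl(4t,\,2t\mp\sqrt{t},\,t\mp\sqrt{t}\bigr)$; in passing you recover \Cref{ordersquare}, that $t$ must be a square. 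What the paper's route buys is brevity; what yours buys is visibility of exactly where the direct-product hypothesis enters (the complement to $\langle\bu\rangle$ is what makes $D$ a graph over $G$), which is precisely the step that fails when $\bu$ is not a direct factor, as in the $\HFP(t,Q_\bu)$ case. The left/right quotient issue you flag at the end is real but harmless: since $ff^{(-1)}=4t\cdot e$ is central and invertible in $\Q[G]$, it forces $f^{(-1)}f=4t\cdot e$ as well, so both conventions yield the same conclusion even for non-abelian $G$.
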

\begin{proof}
From \cite{itoII}, if $C=G \times \langle \bu \rangle$ is an Hadamard group of order $8t$, then there exists an Hadamard difference set in $G$. From \cite{rs}, $C$ is an Hadamard full propelinear code if and only if $C$ is an Hadamard group.
\end{proof}

Next lemmas about the rank of a code and the dimension of its kernel are well known. We omit the proofs.
\begin{lemma}\cite{Rifa2017}\label{brk}
Let $C$ be a code of length $4t=2^st'$, where $t'$ is odd. The rank $r$ of $C$ fulfils $r\leq \frac{2^{s+1}t'}{2^k}+k-1$, where $k$ is the dimension of the kernel.
\end{lemma}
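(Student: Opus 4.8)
The plan is to exploit the partition of $C$ into cosets of its kernel. Write $K=K(C)$, a linear subspace of $\F^n$ of dimension $k$; since $\be\in C$ we have $K\subseteq C$. The first observation is that $C$ is a disjoint union of cosets of $K$: if $x\in C$ then for every $z\in K$ we have $x+z\in C+z=C$ by the translation-invariance defining the kernel, so the whole coset $x+K$ is contained in $C$. Hence $C=\bigcup_{i=1}^{m}(c_i+K)$ for suitable representatives $c_1=\be,c_2,\dots,c_m$, with the union disjoint.

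Next I would count the cosets and then bound the span. Their number is $m=|C|/|K|=|C|/2^{k}$, and for a Hadamard code of length $n=4t$ one has $|C|=2n=8t$; writing $4t=2^{s}t'$ with $t'$ odd this gives $|C|=2^{s+1}t'$, so $m=2^{s+1}t'/2^{k}$. Every codeword has the form $c_i+z$ with $z\in K$, and since both $c_i+z$ and $c_i$ lie in $C$ one gets $z\in\langle C\rangle$ and $c_i\in\langle C\rangle$; therefore $\langle C\rangle=K+\langle c_1,\dots,c_m\rangle$. Because $c_1=\be$ is the all-zero vector it contributes nothing, so $\langle C\rangle=K+\langle c_2,\dots,c_m\rangle$, the sum of a space of dimension $k$ and a span of at most $m-1$ vectors. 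Consequently
$$r=\dim\langle C\rangle\le k+(m-1)=\frac{2^{s+1}t'}{2^{k}}+k-1,$$
which is the claimed inequality.

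The argument is essentially bookkeeping, so I do not expect a serious obstacle. The two points that need care are verifying that $C$ genuinely decomposes into full cosets of $K$—which is precisely the translation-invariance defining the kernel—and tracking the $-1$ that appears because one of the $m$ cosets is $K$ itself, so that only $m-1$ representatives are needed beyond the kernel. A point worth flagging is that the bound uses $|C|=8t$, i.e. that $C$ is a Hadamard code with $2n$ codewords; this is the implicit standing hypothesis that makes the term $2^{s+1}t'$ equal to $|C|$.
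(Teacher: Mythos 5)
Your proof is correct, and it is essentially the standard argument: the paper itself omits the proof (deferring to \cite{Rifa2017}), and the coset-counting you give—$C$ is a disjoint union of $m=|C|/2^k$ cosets of $K(C)$, so $\langle C\rangle = K(C)+\langle c_2,\dots,c_m\rangle$ has dimension at most $k+(m-1)$—is precisely how this well-known rank–kernel bound is established there. You are also right to flag the implicit hypothesis $|C|=8t=2^{s+1}t'$: as literally stated for an arbitrary code the inequality fails (e.g.\ $C=\F^4$ has $r=k=4$ while the bound evaluates to $7/2$), so the lemma tacitly assumes $C$ is a Hadamard code, which is the only setting in which the paper invokes it.
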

\begin{lemma}\cite{prv3}\label{odd}
Let $C$ be a nonlinear Hadamard code of length $2^st'$, where $t'$ is odd. The dimension of the kernel $k$ fulfils $1 \leq k \leq s-1$.
\end{lemma}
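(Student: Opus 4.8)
The plan is to reduce the statement to a single structural fact about the linear subcodes of $C$ and then apply it twice. Write $n=2^st'$ with $t'$ odd; since $C$ is Hadamard, $4\mid n$ and $s\ge 2$. Recall the weight distribution: $\be$ has weight $0$, $\bu$ has weight $n$, and every other codeword has weight exactly $n/2$. Since a Hadamard code is closed under complementation, $C+\bu=C$, so $\bu\in K(C)$ and $\bu\ne\be$, which already gives $k\ge 1$. The whole difficulty lies in the following claim, to which the upper bound reduces.

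\emph{Claim.} If $L\subseteq C$ is linear with $\bu\in L$ and $L\ne C$, then $\dim L\le s$.

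To prove the Claim I would first extract the shape of $L$. Any linear $L\subseteq C$ inherits the Hadamard weight distribution, so all its nonzero words have weight $n/2$ except $\bu$. Fixing a basis $\{\bu,v_1,\dots,v_m\}$ with $m=\dim L-1$ and reading the coordinates as the columns $c_1,\dots,c_n\in\F^m$ of the matrix with rows $v_1,\dots,v_m$, the requirement that every $\sum_i\lambda_iv_i$ with $\lambda\ne\zero$ has weight $n/2$ says that for each nonzero $\lambda$ exactly half the columns satisfy $\langle\lambda,c_j\rangle=1$. A Walsh/character-sum computation (the transform of the column-multiplicity function is supported only at $\zero$) then forces each vector of $\F^m$ to occur equally often among the $c_j$, namely $\nu:=n/2^m$ times. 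This gives the preliminary bound $2^m\mid n$, i.e. $\dim L\le s+1$, and, crucially, splits the coordinates into $2^m$ classes $A_1,\dots,A_{2^m}$ of size $\nu$ on which every word of $L$ is constant.

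The decisive step uses that $L$ lives inside the Hadamard code. Pick $y\in C\setminus L$; since $\bu\in L$ its complement $\bar y=y+\bu$ is also outside $L$, so no word of $L$ equals $\bar y$ and therefore $d_H(y,\ell)\in\{n/2,n\}$ collapses to $d_H(y,\ell)=n/2$ for \emph{every} $\ell\in L$. Writing $w_i=\wt(y|_{A_i})$ and expanding this distance over the classes (each $\ell$ is constant on the classes, being $1$ exactly on some set $S(\ell)$), the condition becomes $\sum_{i\in S(\ell)}(\nu-2w_i)=0$ for all $\ell$; as $\ell$ ranges over $L$ the sets $S(\ell)$ range over all affine hyperplanes of $\F^m$, and a second Fourier inversion forces $\nu-2w_i=0$ for every $i$. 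Hence $\nu$ is even, so $2^{m+1}\mid n$ and $\dim L\le s$, proving the Claim. Taking $L=K(C)$, which contains $\bu$ and (as $C$ is nonlinear) is properly contained in $C$, yields $k\le s$.

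Finally I would delete the boundary value $k=s$ by bootstrapping. If $k=s$, choose $x\in C\setminus K(C)$; then $L=\langle K(C),x\rangle\subseteq C$ is linear, contains $\bu$, and has dimension $s+1$. If $L\ne C$ the Claim gives $\dim L\le s$, a contradiction, while $L=C$ would make $C$ linear, again a contradiction. Hence $k\le s-1$. I expect the second character-sum step to be the main obstacle: recognizing that mere membership in the Hadamard code pins every outside word to distance exactly $n/2$ from all of $L$, and that this rigidly forces the half-class weights $w_i=\nu/2$. This is precisely what upgrades the soft divisibility $2^m\mid n$ to the sharp $2^{m+1}\mid n$, and it is also the engine of the bootstrap, since at dimension $s+1$ the class size equals the odd number $t'$, making $w_i=t'/2$ impossible.
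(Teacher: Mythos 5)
Your proof is correct; every step checks out: the distance fact that distinct codewords of a Hadamard code are at distance $n/2$ unless complementary, the first Walsh computation giving equidistributed column patterns (hence $2^m\mid n$ and the decomposition into classes on which $L$ is constant), the second Walsh computation forcing $w_i=\nu/2$ from the rigidity $d_H(y,\ell)=n/2$ for all $\ell\in L$, and the bootstrap with $\langle K(C),x\rangle$, which does lie inside $C$ because $C+z=C$ for every $z\in K(C)$. Note, however, that there is no proof in the paper to compare against: the lemma is quoted from \cite{prv3} with the proof explicitly omitted, so the comparison must be with the standard argument in that literature and with the machinery this paper uses nearby. That route is structural rather than Fourier-analytic: for $v\in K(C)\setminus\langle\bu\rangle$, the projection of $C$ onto $\Supp(v)$ consists of two copies of a Hadamard code of length $n/2$ whose kernel dimension drops by one (this paper's \Cref{parseval2}), and iterating this until the kernel is exhausted forces the residual length $2^{s-k+1}t'$ to remain an admissible Hadamard length, giving $k\le s-1$; alternatively one invokes the classification of constant-weight linear codes as $\lambda$-fold replicated simplex codes, \cite[Thm. 7.9.5]{hp}, exactly as this paper does in its theorem bounding the kernel by $3$. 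Your two character-sum computations rebuild that structure from scratch, and your ``decisive step'' is precisely the sharpening that the replication structure alone does not give: it upgrades the soft divisibility $2^m\mid n$ to $2^{m+1}\mid n$ by exploiting that any codeword outside a linear subcode $L\ni\bu$ sits at distance exactly $n/2$ from all of $L$. What your approach buys is a self-contained, elementary proof with no external structural citations; what the projection approach buys is brevity and a reusable local statement (\Cref{parseval2}) that the paper needs elsewhere anyway.
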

\begin{lemma}\cite[Thm. 2.4.1 and Thm. 7.4.1]{ak}\label{aske}
Let $C$ be an Hadamard code of length $4t=2^st'$, where $t'$ is odd.
\begin{enumerate}[(i)]
\item If $s\geq 3$ then the rank of $C$ is $r\leq 2t$, with equality if $s=3$.
\item If $s=2$ then $r=4t-1$.
\end{enumerate}
\end{lemma}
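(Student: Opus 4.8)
The plan is to work directly with the $\F_2$-linear span $\langle C\rangle$, exploiting the metric properties of the rows of the underlying binary normalized Hadamard matrix. After normalizing, the all-zero vector $\be$ is a row and its complement $\bu$ is a codeword, so $\langle C\rangle$ is generated by $\bu$ together with the $n-1$ nonzero rows, each of weight $n/2 = 2t$. First I would record the arithmetic fact that drives everything: two distinct rows are at Hamming distance $2t$, so their supports meet in exactly $\tfrac12(\wt r_i + \wt r_j - d_H(r_i,r_j)) = t = n/4$ coordinates. Hence over $\F_2$ one has $\langle r_i, r_j\rangle \equiv t \pmod 2$ for $i\neq j$, while $\langle r_i, r_i\rangle = 2t \equiv 0$ and $\langle \bu, r_i\rangle = \wt r_i = 2t\equiv 0$, $\langle\bu,\bu\rangle = n \equiv 0$.

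The upper bounds now follow from a parity split on $s$. Writing $t = 2^{s-2}t'$, if $s \ge 3$ then $t$ is even, so all the listed generators are pairwise orthogonal and self-orthogonal; thus $\langle C\rangle$ is self-orthogonal, $\langle C\rangle \subseteq \langle C\rangle^\perp$, and $r = \dim\langle C\rangle \le n/2 = 2t$. If $s = 2$ then every codeword has even weight ($0$, $2t = 2t'$, or $4t$), so $\langle C\rangle$ lies in the even-weight code and $r \le n-1$.

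For the exact values the work is in the matching lower bounds. When $s = 2$, $t$ is odd, so the $\F_2$ Gram matrix of the $n-1$ nonzero rows equals $J_{n-1} - I_{n-1}$; since $n-1$ is odd this matrix has kernel exactly $\langle \mathbf 1\rangle$ and rank $n-2$, giving $\dim\langle r_i\rangle \ge n-2$. The explicit relation $\sum_{i} r_i = \be$ (each column of the normalized matrix carries $n/2 \equiv 0$ ones mod $2$) exhibits a single dependency, so $\dim\langle r_i\rangle = n-2$; it then remains to check $\bu\notin\langle r_i\rangle$, which raises the dimension to $n-1$ and yields $r = 4t-1$. When $s = 3$ the mod-$2$ Gram matrix vanishes identically, so this route is unavailable, and I would instead pin down the $2$-rank through the integer invariant factors of $H$: the identity $HH^T = nI$ with $v_2(n) = 3$ forces the elementary divisors to pair as $d_i d_{n+1-i} = n$, and the resulting count of odd invariant factors shows the self-orthogonal code $\langle C\rangle$ is in fact self-dual, so $r = n/2 = 2t$.

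The main obstacle is precisely this last pair of lower bounds. The upper bounds are soft, coming from orthogonality and weight parity, but proving that $\langle C\rangle$ attains the full even-weight dimension when $s=2$, and is genuinely self-dual rather than merely self-orthogonal when $s=3$, requires controlling the $\F_2$-rank of the Hadamard matrix itself. For $s=2$ the Gram-matrix computation suffices; for $s=3$ the degeneracy of that Gram matrix forces the more delicate Smith-normal-form analysis of $H$ over $\Z$, which is where the real content of the cited theorems lies.
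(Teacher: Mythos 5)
First, a point of orientation: the paper does not prove this lemma at all. It is quoted from Assmus--Key \cite{ak} (Thms.~2.4.1 and 7.4.1) under the heading ``we omit the proofs,'' so your attempt has to be judged against the cited source rather than against anything in the paper. With that said, two of your three pieces are sound. The self-orthogonality argument for $s\geq 3$ is correct and complete, and the $s=2$ case is essentially right: the Gram matrix of the $n-1$ nonzero rows is indeed $J_{n-1}+I_{n-1}$ over $\F$, its kernel is $\langle \mathbf{1}\rangle$ because $n-1$ is odd, and $\operatorname{rank}(RR^T)\leq\operatorname{rank}(R)$ gives $\dim\langle r_i\rangle\geq n-2$. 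The step you left ``to check,'' namely $\bu\notin\langle r_1,\dots,r_{n-1}\rangle$, is immediate and you should have closed it: in the binary normalized matrix every row has first coordinate $0$, so the row span lies in the hyperplane of vectors vanishing at the first coordinate, while $\bu$ does not. (In fact you do not even need your explicit dependency $\sum_i r_i=\be$: rank $\geq n-2$ for the rows, $\bu$ outside their span, and the even-weight bound $r\leq n-1$ already force $r=4t-1$.)

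The genuine gap is the equality $r=2t$ when $s=3$, and your proposed route would fail as written. You want to apply the invariant-factor pairing $d_id_{n+1-i}=n$ to $H$ and then ``count odd invariant factors''; but $H$ has entries $\pm 1$, so $H\equiv J\pmod 2$, and therefore the number of odd invariant factors of $H$ equals the $2$-rank of $J$, which is $1$ --- for every Hadamard matrix of every order. What the pairing actually gives you when $v_2(n)=3$ is that the $2$-adic Smith form of $H$ is $\operatorname{diag}(1,2,\dots,2,4,\dots,4,8)$; this is true but says nothing directly about the code, because the code is not the row space of $H$ mod $2$: it is spanned by $\bu$ and the rows of $A=\tfrac12(J-H)$, and $A$ satisfies no identity of the form $AA^T=nI$. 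One computes instead $AA^T=\tfrac{n}{4}\bigl(I+J-\mathbf{1}e_1^T-e_1\mathbf{1}^T\bigr)$, which is $\equiv 0 \pmod 2$ when $8\mid n$ --- that is precisely your self-orthogonality statement, and it is why every Gram-type argument degenerates here. The actual content of \cite{ak}, Thm.~7.4.1, is an elementary-divisor analysis of the incidence matrix $N$ of the associated symmetric $2$-$(n-1,\,n/2,\,n/4)$ design (equivalently, of the Hadamard $3$-design), using $NN^T=\tfrac{n}{4}(I+J)$ and $NJ=JN=\tfrac{n}{2}J$, where the role of $H^{-1}=\tfrac1n H^T$ is played by $N^{-1}=\tfrac{4}{n}\bigl(N^T-\tfrac12 J\bigr)$; it is the $2$-adic bookkeeping for $N$ (not for $H$) that yields $2$-rank exactly $n/2$ when $v_2(n)=3$, and only the inequality when $v_2(n)\geq 4$. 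Without that bridge from $H$ to $N$ (or to $A$), your final sentence is a restatement of the theorem rather than a proof of it.
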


In the whole paper, if there appears some codeword $v$ of a code $C$ of length $4t$, that is presented in the following way $v=(\alpha,\beta,\gamma,\delta,\ldots)$, then each element that compounds $v$ has the same length. For example, if $v=(\alpha_1,\alpha_2,\ldots,\alpha_n)$, then the length of each $\alpha_i$ is $4t/n$.

To finish this section, we recall a notation introduced in \cite{BBR} to denote the Hadamard full propelinear codes that we will use in the next section. An $\HFP(t,2,2,2_\bu)$-code means an Hadamard full propelinear code of type $C_t\times C_2 \times C_2 \times C_2$ where the codeword $\bu$ is the generator of the last $C_2$. So the numbers in parentheses mean the orders of the cyclic groups. If the parameter in the parentheses is $Q$, then it means the quaternion group of eight elements.

\section{Associated group $C_{2t} \times C_2$} \label{C2tC2}
Now, we introduce a subclass of Hadamard full propelinear codes whose group structure consists of direct product of groups, fulfilling that its associated group $\Pi$ is $C_{2t} \times C_2$. In other words, we study the short exact sequence
\vspace{-0.3cm}

$$1\rightarrow C_2 \rightarrow C \rightarrow C_{2t}\times C_2 \rightarrow 1,$$
where $C$ is a nontrivial direct product of groups.

\begin{proposition}\label{HFPeven2t2} \label{HFP2t2}
Let $C$ be an Hadamard full propelinear code of lenght $4t$ with associated group $C_{2t}\times C_2$. If $C$ as a group is a nontrivial direct product, then $C$ is some of the following $\HFP$-codes:
\begin{enumerate}[i)]
\item $\HFP(4t_\bu,2)\simeq C_{4t} \times C_2 = \langle \ba, \bb \mid \ba^{4t}=\bb^2=\be, \ba^{2t}=\bu \rangle$. [\Cref{4tu2}]
\item $\HFP(2t,2,2_\bu) \simeq C_{2t}\times C_2 \times C_2 = \langle \ba, \bb, \bu \mid \ba^{2t}=\bb^2=\be \rangle$. [\Cref{2t22u}]
\item $\HFP(2t,4_\bu) \simeq C_{2t} \times C_4 = \langle \ba, \bb \mid \ba^{2t}=\bb^4=\be, \bb^2=\bu \rangle$. [\Cref{2t4u}]
\item $\HFP(t,Q_\bu) \simeq C_t \times Q= \langle \bd,\ba,\bb \mid \bd^t=\ba^4=\bb^4=\be, \ba^2=\bu^2=\bu, \ba\bb\ba=\bb \rangle$, where $Q$ is the quaternion group of eight elements. [\Cref{tQu}] 
\end{enumerate}
\end{proposition}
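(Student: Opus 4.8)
The plan is to read the conclusion as a classification of central extensions and to pin down both the isomorphism type of $C$ and the position of $\bu$. By \Cref{perm} we have $|C|=8t$, $\bu$ is a central involution, and $C/\langle\bu\rangle\cong C_{2t}\times C_2$ is abelian; hence $[C,C]\subseteq\langle\bu\rangle$, so $C$ is nilpotent of class at most $2$ and is the direct product of its Sylow subgroups. Writing $4t=2^st'$ with $t'$ odd, the odd-order Hall subgroup of $C$ is carried isomorphically onto the odd part of $C_{2t}\times C_2$ (since $\langle\bu\rangle$ is a $2$-group), which is the cyclic group $C_{t'}$. Thus $C\cong P\times C_{t'}$, where $P$ is the Sylow $2$-subgroup of order $2^{s+1}$, with $P/\langle\bu\rangle$ equal to the $2$-part of the associated group, namely $C_{2^{s-1}}\times C_2$, an abelian group of rank $2$. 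The whole problem therefore reduces to determining the admissible groups $P$ together with the distinguished element $\bu\in P$.

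I would then settle the abelian case. When $[C,C]=\{\be\}$, the group $P$ is an abelian $2$-group of order $2^{s+1}$ whose quotient by the central order-$2$ subgroup $\langle\bu\rangle$ is $C_{2^{s-1}}\times C_2$. A short invariant-factor argument (the largest element order can drop by at most a factor of $2$ under the quotient, and the rank increases by at most one, so it is $2$ or $3$) shows that $P$ is one of $C_{2^s}\times C_2$, $C_{2^{s-1}}\times C_4$, or $C_{2^{s-1}}\times C_2\times C_2$. Re-attaching the factor $C_{t'}$ and recording which element becomes $\bu$ gives exactly the three abelian codes: $C_{4t}\times C_2$ with $\bu=\ba^{2t}$ (case i), $C_{2t}\times C_4$ with $\bu=\bb^2$ (case iii), and $C_{2t}\times C_2\times C_2$ (case ii). Since the list of admissible $P$ is exhaustive, no other sign choices on the generators produce a new group.

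The nonabelian case is where the real work lies, and it is the step I expect to be the main obstacle. Here $[C,C]=\langle\bu\rangle$, and from the Krull--Schmidt decomposition of $C$ into directly indecomposable factors exactly one factor $M$ is nonabelian; since the commutator of a direct product is the product of the commutators, $M$ must be an indecomposable nonabelian $2$-group with $[M,M]=\langle\bu\rangle$ of order $2$, everything else being absorbed into an abelian complement. Group theory alone does \emph{not} finish the job: the conditions ``$[M,M]$ has order $2$'' and ``$M/\langle\bu\rangle$ has rank $2$'' are met not only by the quaternion group $Q$ but also by $D_4$ and, for larger $s$, by the modular, semidihedral and generalized quaternion $2$-groups, all of which yield associated group $C_{2t}\times C_2$ and are genuine nontrivial direct products. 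To eliminate these I would invoke the Hadamard full propelinear structure itself: viewing $C$ as a Hadamard group in the sense of \Cref{hg}, the relative difference set attached to the central involution $\bu$, together with the exponent and quotient bounds of \Cref{exponentbound} and \Cref{quotientbound}, forces the Sylow $2$-subgroup to have order $8$ (equivalently $s=2$, hence $t$ odd) and to be the quaternion group rather than $D_4$. This selection of $Q$ over $D_4$, and the suppression of the large-exponent $2$-groups, is the crux of the argument. Once it is established, one obtains $C\cong C_t\times Q$ with $t$ odd (case iv), and combining the abelian and nonabelian analyses completes the classification.
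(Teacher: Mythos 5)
Your reduction to the Sylow $2$-subgroup is sound: since $C/\langle\bu\rangle$ is abelian, $[C,C]\subseteq\langle\bu\rangle\subseteq Z(C)$, so $C$ is nilpotent and splits as $P\times C_{t'}$, and your invariant-factor analysis of the abelian case correctly recovers cases i)--iii). This part is in fact more self-contained than the paper, which instead enumerates central extensions factor by factor (for $t$ even) and cites Flannery's table of central extensions of $C_t\times C_2^2$ by $C_2$ (for $t$ odd).

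The gap is exactly at the step you yourself call the crux, and the tools you invoke there cannot close it. \Cref{exponentbound} (Kraemer) applies only to \emph{abelian} groups of order $2^{2s+2}$, and \Cref{quotientbound} (Dillon) applies only to Hadamard groups of order $2^{2s+2}$, i.e.\ to $2$-groups. Neither says anything about $C_t\times D_4$ with $t$ odd, $t>1$ (a nonabelian group whose order $8t$ is not a power of two), so they cannot ``select $Q$ over $D_4$,'' nor can they force the Sylow $2$-subgroup to have order $8$. (The fact that $t$ must be odd in case iv) needs no difference-set bound at all: $Q/\langle\bu\rangle\cong D_4/\langle\bu\rangle\cong C_2^2$, so the $2$-part of the associated group has exponent $2$, forcing $t$ odd.) The paper eliminates $D_4$ with a genuinely different and nontrivial ingredient: Proposition 6 of Ito's paper \cite{ito}, which states that $C_t\times D_4$ is \emph{not} a Hadamard group for $t$ odd; combined with Flannery's classification this completes the odd case. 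Without Ito's theorem, or an equivalent combinatorial argument about the Hadamard subset of \Cref{hg}, your proof cannot distinguish $Q$ from $D_4$. Two smaller inaccuracies in the same step: semidihedral and generalized quaternion groups of order at least $16$ have commutator subgroup of order at least $4$, so they never satisfy $[M,M]=\langle\bu\rangle$ and need no elimination; the genuine remaining competitors are $D_4$ and the modular maximal-cyclic $2$-groups $M_n(2)$, and for the latter (which force $t$ even) your cited bounds again do not apply when the odd part $t'$ of $t$ is nontrivial --- only when $t'=1$ are they ruled out for free, by the hypothesis that $C$ is a nontrivial direct product, since $M_n(2)$ is directly indecomposable.
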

\begin{proof}
Note that there are two different cases depending on the parity of the value of $t$. Firstly, we suppose that $t$ is odd, so $C_{2t} \times C_2 \simeq C_t \times C_2^2$. Let $E$ be an HFP-code with $\Pi=C_t \times C_2^2$. From \Cref{perm} we have that $\Pi=E/\langle \bu \rangle$. Thus, the code $E$ is an extension of $C_t\times C_2^2$ by $\langle \bu \rangle \simeq C_2$.
From \cite[Table 2]{flan} we have that the central extensions of $C_t\times C_2^2$ by $C_2$ with $t$ odd are $C_t\times C_2^3$, $C_t \times C_4 \times C_2$, $C_t \times Q$ and $C_t \times D$ (where $D$ is the dihedral group of order $8$). Furthermore from \cite[Prop. 6]{ito}, if $t$ is odd we have that $C_t \times D$ cannot be an Hadamard group. Thus, from \cite{rs} there are no $\HFP$-codes with the group structure $C_t \times D$.

Now, we suppose that $t$ is even. Let $E$ be an HFP-code with associated group $\Pi=C_{2t} \times C_2$. From \Cref{perm} we have that $\Pi=E/\langle \bu \rangle$. Thus, the code $E$ is an extension of $C_{2t}\times C_2$ by $\langle \bu \rangle \simeq C_2$.
The extensions of $C_2$ by $C_2$ are $C_4$ and $C_2\times C_2$. We denote by $E_1$ any of these extensions. Making the direct product $E_1 \times C_{2t}$ we get two extensions of $C_{2t}\times C_2$ by $C_2$. Thus, we obtain $\HFP(2t,4_\bu)$ and $\HFP(2t,2,2_\bu)$.

Let $t=2^s t'$ with $t'$ odd, so $C_{2t} \times C_2$ is isomorphic to $C_{2^{s+1}} \times C_{t'} \times C_2$. Let $E_2$ be the extensions of $C_{2^{s+1}}$ by $C_2$. As $E_2 / C_2$ is cyclic we have $E_2$ is abelian. Thus, $E_2$ is $C_{2^{s+2}}$ or $C_{2^{s+1}} \times C_2$. Making the direct product $E_2 \times C_{t'} \times C_2$ we get two extensions of $C_{2t}\times C_2$ by $C_2$. Thus, we obtain $\HFP(4t_\bu,2)$ and $\HFP(2t,2,2_\bu)$.
Let $E_3$ be the extension of $C_{t'}$ by $C_2$, which is abelian since $E_3/C_2$ is cyclic. Hence, $E_3$ is $C_{2t'}$. Therefore, the direct product $C_{2t'} \times C_{2^{s+1}} \times C_2 \simeq C_{2t} \times C_2^2$ is an extension of $C_{2t}\times C_2$ by $C_2$, which corresponds with an $\HFP(2t,2,2_\bu)$-code.
\end{proof}

\begin{remark}
In the conditions of \Cref{HFP2t2}, if $t$ is even, then $C$ cannot be an $\HFP(t,Q_\bu)$-code because its associated group is $C_t \times C_2^2$.
If the value of $t$ is odd, then $\HFP(t,2,2,2_\bu) \simeq \HFP(2t,2,2_\bu)$ and $\HFP(t,4_\bu,2) \simeq \HFP(2t,4_\bu) \simeq \HFP(4t_\bu,2)$. 
\end{remark}

\begin{proposition}\label{rankkernel}
Let $C$ be a nonlinear Hadamard full propelinear code of length $4t$ with associated group $C_{2t} \times C_2$. Then:
\begin{enumerate}[i)]
\item If $t$ is odd, then $r=4t-1$ and $k=1$.
\item If $t$ is even, then $r\leq 2t$, and $r = 2t$ if $t \equiv 2 \modu 4$.
\end{enumerate}
\end{proposition}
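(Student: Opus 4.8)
The plan is to reduce the whole statement to the three cited structural lemmas on Hadamard codes by bookkeeping the $2$-adic valuation of the length. Writing $4t = 2^s t'$ with $t'$ odd, the key observation is that the associated-group hypothesis enters only through fixing the length at $4t$: since $C$ is an Hadamard full propelinear code it is in particular a nonlinear Hadamard code of length $4t$, so \Cref{odd} and \Cref{aske} apply verbatim. Everything then follows from determining $s$ from the parity of $t$.

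For part i), if $t$ is odd then $4t = 2^2 t$, so $s=2$ and $t'=t$. First I would invoke part (ii) of \Cref{aske} to conclude $r = 4t-1$ directly. Then, because $C$ is nonlinear, \Cref{odd} gives $1 \le k \le s-1 = 1$, forcing $k=1$. This disposes of the odd case with no further work.

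For part ii), if $t$ is even then $4t = 2^s t'$ with $s \ge 3$, and part (i) of \Cref{aske} immediately yields $r \le 2t$. To get the sharper equality I would specialise to $t \equiv 2 \modu 4$: writing $t = 2m$ with $m$ odd gives $4t = 2^3 m$, hence $s=3$, and the equality clause of part (i) of \Cref{aske} (which asserts $r = 2t$ precisely when $s=3$) gives $r = 2t$.

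The argument is essentially a dictionary translation, so there is no serious obstacle; the only points requiring care are the correct computation of $s$ in each case (in particular recognising $t \equiv 2 \modu 4$ as exactly the case $s=3$, and $t$ odd as exactly $s=2$) and remembering that the kernel bound in \Cref{odd} needs the nonlinearity hypothesis, which is available. I would also note explicitly that the finer group structure $C_{2t}\times C_2$ plays no direct role here beyond pinning down the length; it is exploited instead in the subsection-specific refinements of the rank and kernel.
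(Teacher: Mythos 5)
Your proof is correct and matches the paper's approach: the paper's own proof is a one-line reduction, citing \cite{Rifa2017} for item i) and \Cref{aske} for item ii), which is exactly the dictionary translation you carry out (with the same computation of $s$ from the parity of $t$). The only cosmetic difference is that for the odd case you derive $r=4t-1$ and $k=1$ from \Cref{aske}(ii) together with \Cref{odd} instead of quoting \cite{Rifa2017}, which if anything makes the argument more self-contained; just note that \Cref{aske}(i) asserts equality \emph{if} $s=3$, not ``precisely when,'' though you only use the direction that is stated.
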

\begin{proof}

The first and the second item are easy from \cite{Rifa2017} and \Cref{aske}, respectively.
\end{proof}

\begin{proposition} \label{genpermutations}
Let $C =\langle \ba, \bb, \bu \rangle$ be a code of type $\HFP(2t,2,2_\bu)$ or $\HFP(2t,4_\bu)$ or $\HFP(4t_\bu,2)$. Then, up to equivalence, we have
\begin{enumerate}[i)]
\item $\pi_\ba=(1,2,\ldots,2t)(2t+1,2t+2,\ldots,4t)$,
\item $\pi_\bb=(1,2t+1)(2,2t+2)\ldots (2t,4t)$,
\item Knowing the value of $\ba$ is enough to define $\bb$,
\item $\Pi=C_{2t}\times C_2$.
\end{enumerate}
\end{proposition}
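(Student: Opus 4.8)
The crux is to recognize that the associated group $\Pi$ acts \emph{regularly} on the $4t$ coordinates. Recall from the proof of \Cref{perm} that $x\mapsto \pi_x$ is a group homomorphism $C\to S_{4t}$ with kernel $\langle\bu\rangle$, so $\Pi\simeq C/\langle\bu\rangle \simeq C_{2t}\times C_2$ acts faithfully on the coordinate set. Because $C$ is an $\HFP$-code, every codeword $x\neq\be,\bu$ has $\pi_x$ without fixed coordinates, and every nonidentity element of $\Pi$ is the common image of such a pair $\{x,x\bu\}$ with $x\notin\langle\bu\rangle$. Hence every nonidentity element of $\Pi$ acts without fixed points, i.e. the action is semiregular, so all orbits have size $|\Pi|$. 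Since $|\Pi|=4t$ equals the number of coordinates, the action is in fact regular.

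Using regularity I would identify the coordinate set with $\Pi=C_{2t}\times C_2$, indexing coordinates by pairs $(i,j)$ with $i\in\Z_{2t}$, $j\in\Z_2$, so that $\pi_\ba$ and $\pi_\bb$ act as the translations $(i,j)\mapsto(i+1,j)$ and $(i,j)\mapsto(i,j+1)$. Here one checks that $\pi_\ba$ has order $2t$ and that each nonidentity power $\pi_\ba^i$ (image of $\ba^i\notin\langle\bu\rangle$ for $1\le i\le 2t-1$) is again fixed-point-free, so $\langle\pi_\ba\rangle$ is itself semiregular and $\pi_\ba$ is a product of two $2t$-cycles; likewise $\pi_\bb$ is a fixed-point-free involution, i.e. a product of $2t$ transpositions. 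Relabelling $(i,0)\mapsto i+1$ and $(i,1)\mapsto 2t+i+1$, a coordinate permutation and hence an isomorphism, which is a special case of equivalence, brings $\pi_\ba$ and $\pi_\bb$ to the forms in (i) and (ii). Since these commute, generate $\Pi$, and satisfy $\langle\pi_\ba\rangle\cap\langle\pi_\bb\rangle=\{I\}$, item (iv) follows at once.

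For (iii) I would use that in all three types $C$ is abelian, so $\ba*\bb=\bb*\ba$, that is $\ba+\pi_\ba(\bb)=\bb+\pi_\bb(\ba)$; over $\F$ this rearranges to $(\pi_\ba+I)\bb=(\pi_\bb+I)\ba$, whose right-hand side is fixed once $\ba$ is known. Writing $\ba=(\ba_0,\ba_1)$ and $\bb=(\bb_0,\bb_1)$ in the two blocks and letting $\sigma$ be the cyclic shift on $\F^{2t}$, this becomes $(\sigma+I)\bb_0=\ba_0+\ba_1=(\sigma+I)\bb_1$. The kernel of $\sigma+I$ is $\{\be,\bu\}$, so each block is determined up to adding the all-one vector; combining this with the order condition on $\bb$ (namely $\bb^2=\be$ forces $\bb_0=\bb_1$, whereas $\bb^2=\bu$ forces $\bb_1=\overline{\bb_0}$) pins $\bb$ down precisely up to the single ambiguity $\bb\leftrightarrow\bb\bu$. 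Thus $\ba$ determines $\bb$.

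The step I expect to be most delicate is the second paragraph: one must verify not only that $\Pi$ acts freely, but that each cyclic factor is separately semiregular, so that a \emph{single} coordinate permutation simultaneously normalizes $\pi_\ba$ and $\pi_\bb$ to the stated cyclic forms. A secondary point to confirm in (iii) is the solvability of $(\sigma+I)\bb_0=\ba_0+\ba_1$: this requires $\ba_0+\ba_1$ to have even weight, which follows from \Cref{xi}, since $\ba^{2t}=\sum_{i=0}^{2t-1}\pi_\ba^{\,i}(\ba)\in\{\be,\bu\}$ forces $\wt(\ba_0)\equiv\wt(\ba_1)\pmod 2$.
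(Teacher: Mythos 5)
Your proof is correct, but it reaches items i) and ii) by a more structural route than the paper. The paper argues directly on cycle structure: $\pi_\ba$ must be a product of two $2t$-cycles because a shorter cycle of length $j$ would give $\pi_{\ba^j}=\pi_\ba^j$ a fixed coordinate; the transpositions of $\pi_\bb$ must cross between the two halves because otherwise some $\pi_{\bb^{-1}\ba^{i-1}}$ has a fixed point; and then the commutation relation $\pi_\ba\pi_\bb=\pi_\bb\pi_\ba$ propagates one transposition of $\pi_\bb$ to determine all the others. Your observation that full propelinearity makes the faithful action of $\Pi\simeq C/\langle\bu\rangle$ on the $4t$ coordinates free, hence (by the counting $|\Pi|=4t$) regular, packages all of this at once: identifying coordinates with $\Pi$ turns both generators into translations, and a single relabelling yields the stated forms simultaneously, which is exactly the point the paper has to earn through the commutativity argument. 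For item iii) the two arguments are essentially the same computation in different clothes: the paper derives the explicit recursion $b_i=\sum_{j=i+1}^{2t}\widehat{a}_j$ from $\ba\bb=\bb\ba$ and then splits according to $\bb^2\in\{\be,\bu\}$, while you solve $(\sigma+I)\bb_0=\ba_0+\ba_1$ using $\ker(\sigma+I)=\{\be,\bu\}$; both leave the same residual ambiguity (the paper's free bit $b_{2t}$ is your $\bb\leftrightarrow\bb\bu$), and your even-weight solvability check via \Cref{xi} is a correct consistency remark, though not strictly needed since $\bb$ is given to exist. Item iv) is handled identically (via \Cref{perm}). What the paper's concrete version buys, and yours does not, is the explicit formula for $\bb$ in terms of $\ba$, which the authors exploit computationally (brute-force search only over $\ba$); what your version buys is brevity and a cleaner justification that one coordinate permutation normalizes both generators at once.
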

\begin{proof}
In any case, we have that $\ba^{2t}, \bb^2 \in \{\be,\bu\}$, so $\pi_\ba$ has order $2t$, and $\pi_\bb$ has order $2$. As $\pi_\ba$ has order $2t$, then $\pi_\ba$ is the product of two cycles of length $2t$.
Indeed, if we have a cycle of length $j<2t$ then $\pi_{\ba^j}=\pi_\ba^j$ has a fixed point, which contradicts that $C$ is full propelinear. Without loss of generality we can set $\pi_\ba=(1,2,\ldots,2t)(2t+1,2t+2,\ldots,4t)$, so we have the first item. As $\pi_\bb$ has order 2, then $\pi_\bb$ is a product of disjoint transpositions.
Each one of the transpositions sends an element of the first half of $\{1,2,\ldots,4t\}$ to the second half and vice versa.
Indeed, assume for instance than $\pi_\bb$ moves the first position to the $i$th, where $i < 2t$, which is the same position that we obtain using $\pi_{\ba^{i-1}}$, so $\pi_{\bb^{-1}\ba^{i-1}}$ has a fixed point which contradicts that $C$ is full propelinear.
Furthermore, if we assume that $\pi_\bb$ moves, for instance, the first position to the $i$th position in the second half of $\{1,2,\ldots,4t\}$ then $\pi_\bb$ is uniquely determined.
Indeed, as $\pi_\ba \pi_\bb=\pi_\bb \pi_\ba$ we have that $2 \rightarrow 1$ by $\pi_{\ba^{-1}}$, $1\rightarrow i$ by $\pi_\bb$ and $i\rightarrow i+1$ by $\pi_\ba$.
Hence, $2\rightarrow i+1$ by $\pi_\bb$, and so on. Thus, we can assume $\pi_\bb=(1,2t+1)(2,2t+2)\ldots (2t,4t)$, and we have the second item.
Since $\ba\bb=\bb\ba$, we have $\bb=\pi_\ba(\bb)+\ba+\pi_\bb(\ba)=\pi_\ba(\bb)+\widehat{\ba}$, where $\widehat{\ba}=\ba+\pi_\bb(\ba)=(\widehat{a}_1,\ldots,\widehat{a}_{2t},\widehat{a}_1,\ldots,\widehat{a}_{2t})$, then $b_i=\sum_{j=i+1}^{2t}\widehat{a}_j$ for $i\in \{1,\ldots,2t-1\}$ and $b_{2t}\in \{0,1\}$. We know that $\bb^2 \in \{\be, \bu\}$. If $\be=\bb^2$, then we have that $\bb=\pi_\bb(\bb)$, so $b_i=b_{2t+i}$ for $i\in\{1,\ldots,2t\}$. Thus

$$
\begin{array}{rrl}
\bb & = & \displaystyle \Biggl(b_{2t}+\sum_{j=2}^{2t}\widehat{a}_j,b_{2t}+\sum_{j=3}^{2t}\widehat{a}_j,\ldots,b_{2t}+\widehat{a}_{2t},b_{2t},\\[0.2cm]
 & & \hspace{0.3cm}\displaystyle b_{2t}+\sum_{j=2}^{2t}\widehat{a}_j,b_{2t}+\sum_{j=3}^{2t}\widehat{a}_j,\ldots,b_{2t}+\widehat{a}_{2t},b_{2t} \Biggr).
\end{array}
$$
If $\bb^2=\bu$, then $\bb=\pi_\bb(\bb)+\bu$, so $b_i=b_{2t+i}+1$ for $i\in\{1,\ldots,2t\}$. Thus

$$
\begin{array}{rrl}
\bb & = & \displaystyle \Biggl(b_{2t}+\sum_{j=2}^{2t}\widehat{a}_j,b_{2t}+\sum_{j=3}^{2t}\widehat{a}_j,\ldots,b_{2t}+\widehat{a}_{2t},b_{2t},\\[0.2cm]
 & & \hspace{0.3cm}\displaystyle 1+b_{2t}+\sum_{j=2}^{2t}\widehat{a}_j,1+b_{2t}+\sum_{j=3}^{2t}\widehat{a}_j,\ldots,1+b_{2t}+\widehat{a}_{2t},1+b_{2t} \Biggr).
\end{array}
$$
From a computational point of view, this third item saves us computing time. As knowing the value of the generator $\ba$, we know the value of $\bb$. Therefore, by brute-force search, we only need to vary the generator $\ba$.\\
Note that the fourth item is immediate from \Cref{perm}.
\end{proof}
In the Sections \ref{4tu2}, \ref{2t22u}, and \ref{2t4u} we will use the permutations associated to the generators as in \Cref{genpermutations}.

\begin{theorem}
Let $C=\langle \ba, \bb, \bu \rangle$ be a code of type $\HFP(2t,2,2_\bu)$ or $\HFP(2t,4_\bu)$ or $\HFP(4t_\bu,2)$. If $C$ is nonlinear, then the dimension of the kernel of $C$ is $k \leq 3$.
\end{theorem}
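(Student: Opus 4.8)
The plan is to show that the image of the kernel in the associated group is small. Write $\bar K$ for the image of $K(C)$ under the projection $C\to\Pi=C/\langle\bu\rangle\simeq C_{2t}\times C_2$ of \Cref{perm}. Since $\bu\in K(C)$ and $\langle\bu\rangle$ is exactly the kernel of this projection, $|K(C)|=2|\bar K|$, so $k=1+\log_2|\bar K|$ and it suffices to prove $|\bar K|\le 4$. Because $K(C)$ is a binary linear space in which (as in every Hadamard code) every nonzero codeword other than $\bu$ has Hamming weight $2t$, the set $\bar K$ is a $2$-subgroup of $C_{2t}\times C_2=\langle\bar\ba\rangle\times\langle\bar\bb\rangle$, and any two distinct weight-$2t$ codewords have supports meeting in $0$ or $t$ positions. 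First I would dispose of the case $t$ odd, where \Cref{rankkernel}(i) already gives $k=1$; from now on assume $t$ even.

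Next comes the key reduction. Projecting $C_{2t}\times C_2$ onto its second factor $\langle\bar\bb\rangle\simeq C_2$ restricts to a homomorphism on $\bar K$ with kernel $\bar K\cap\langle\bar\ba\rangle$ and image of order at most $2$; hence $|\bar K|\le 2\,|\bar K\cap\langle\bar\ba\rangle|$. Thus the whole statement reduces to the single claim that the cyclic part of the kernel is small:
$$|\bar K\cap\langle\bar\ba\rangle|\le 2.$$
Equivalently, no power $\ba^m$ with $\bar\ba^m$ of order $\ge4$ may lie in $K(C)$ up to the factor $\bu$; since the order-$4$ subgroup of the $2$-part of $\langle\bar\ba\rangle$ is $\langle\bar\ba^{t/2}\rangle$, it is enough to show $\ba^{t/2}\notin K(C)$ and $\ba^{t/2}\bu\notin K(C)$.

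To attack this I would exploit \Cref{genpermutations}: since $\pi_\ba$ is the product of two $2t$-cycles and $\pi_\bb$ interchanges the two halves, $\Pi$ acts \emph{regularly} on the $4t$ coordinates, so every nontrivial element of $\bar K$ acts there as a fixed-point-free translation. For $g=\ba^m\in K(C)$ the hypothesis $g\in K(C)$ means, by \Cref{aut}(i), that the shift $\pi_\ba^m$ is an automorphism of $C$, while \Cref{xi} writes $g=\sum_{l=0}^{m-1}\pi_\ba^l(\ba)$, a cyclic partial sum of the generator $\ba=(a',a'')$ in each half. I would then combine three facts about such a $g$: that $\wt(g)\in\{2t,4t\}$; that $C$ is simultaneously invariant under the translation $+g$ and under the coordinate shift $\pi_\ba^m$, all of whose cycles have the common length $2t/m$; and the intersection restriction on supports of weight-$2t$ codewords noted above. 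The goal is to show that a shift $\pi_\ba^m$ of order $\ge4$ preserving $C$ would force a constant-weight, shift-invariant linear subcode too rigid to coexist with minimum distance $2t$, or else produce a circulant Hadamard structure of order $2t/m$ contradicting \Cref{remarkTuryn} and \Cref{rifacirculantk1}.

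The main obstacle is exactly this last claim, $|\bar K\cap\langle\bar\ba\rangle|\le2$. Unlike the reductions above it cannot be obtained from the generic rank/kernel inequalities (\Cref{brk}, \Cref{aske}, \Cref{odd}) once $8\mid t$: \Cref{odd} only yields $k\le s-1$ for length $2^st'$, which already exceeds $3$ as soon as $8\mid t$, and pairing the rank bounds does not constrain $k$ from above either. So the argument must genuinely use the cyclic symmetry, and the delicate point is ruling out the order-$4$ shift automorphism $\pi_\ba^{t/2}$ using only the weight spectrum and the regular action, without an explicit description of $C$. I expect this to require a careful analysis of the partial-sum vectors $\ba^{m}$, carried out uniformly for the three types $\HFP(2t,2,2_\bu)$, $\HFP(2t,4_\bu)$ and $\HFP(4t_\bu,2)$ through their common permutation description in \Cref{genpermutations}.
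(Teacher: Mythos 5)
Your reduction is sound as far as it goes, but the proof stops exactly where the real work begins. The chain $k=1+\log_2|\bar K|$, then $|\bar K|\le 2\,|\bar K\cap\langle\bar\ba\rangle|$, then the identification of the order-$4$ subgroup of the $2$-part of $\langle\bar\ba\rangle$ with $\langle\bar\ba^{t/2}\rangle$, is correct (and the case $t$ odd is indeed settled by \Cref{rankkernel}). But the statement everything hinges on --- that $\ba^{t/2}\notin K(C)$ when $C$ is nonlinear --- is never proved: you yourself call it ``the main obstacle'' and offer only candidate ingredients and hoped-for contradictions. That claim essentially \emph{is} the theorem, so what you have is a correct reformulation plus a research plan. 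Two further problems with the plan itself. First, your target is a priori stronger than the theorem: $k\le 3$ is perfectly compatible with $\bar K=\langle\bar\ba^{t/2}\rangle$ being cyclic of order $4$, so even granting the theorem you have no assurance that the statement you reduced to is true; excluding a cyclic kernel image of order $4$ requires an argument that nothing in your sketch supplies. Second, your proposed sources of contradiction are too weak: \Cref{remarkTuryn} and \Cref{rifacirculantk1} do not assert nonexistence of circulant Hadamard codes (Ryser's conjecture is open, and order $4$ exists), and the paper itself shows that kernel vectors outside $\langle\bu\rangle$ genuinely occur and are tied to circulant structures (\Cref{tpower2}, \Cref{eu-kernel-2t4u}, \Cref{circulant-2t4u}, and the order-$16$ example with $k=2$). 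So ``producing a circulant Hadamard structure'' is not by itself a contradiction; it only constrains parameters, and any argument along these lines has to be conditional in the same way the paper's \Cref{tpower2} is.

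For contrast, the paper's proof never passes to the associated group. It analyzes $K(C)$ directly as a binary constant-weight code: after deleting the row $\bu$ and the zero columns, $K(C)$ becomes a $\lambda$-fold replication of a simplex code by the classification of constant-weight codes (Huffman--Pless, Thm.~7.9.5); then the shift $\pi_\ba\in\Aut(K(C))$ given by \Cref{aut} forces the zero columns to be spaced exactly $2^{k-1}$ apart in each cycle and the simplex blocks to be identical, so $K(C)$ is generated by one repeated block $\bigl(\bu;\,S_{k-1}\mid\zero\bigr)$, i.e.\ $K(C)$ is a \emph{cyclic} linear Hadamard code, the dual of a cyclic extended Hamming code. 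Since extended Hamming codes are cyclic only at length $4$, this gives $k\le 3$ in one stroke, uniformly for the three types. If you wish to salvage your route, the missing lemma you must actually prove is that $\pi_\ba^{t/2}\in\Aut(C)$ forces the kernel to contain the full block-periodic structure above, and I would expect any such proof to end up re-deriving the paper's argument rather than bypassing it.
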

\begin{proof}
The kernel of $C$ is a linear code $K$ of dimension $k$ with a generator matrix $G_k$,
\vspace{-0.2cm}

$$G_k=
\left(\begin{matrix}
\bu \\
v_2 \\
\vdots \\
v_k 
\end{matrix}\right),$$
for some codewords $v_2,v_3,\ldots,v_k \in C$ with weight $2t= 2^{k-2} \lambda$, where $\lambda$ is the number of cosets. From \Cref{odd} and $C$ is nonlinear, $\lambda$ is even and greater than $2$. 
We can set $K'$ a constant-weight code of dimension $k-1$, with generator matrix

\vspace{-0.2cm}

$$G_{k-1}=
\left(\begin{matrix}
v'_2 \\
\vdots \\
v'_k 
\end{matrix}\right),$$

where there are no all-zero columns. Note that $K'$ is built by removing firstly the all-one row, and secondly the all-zero columns. Hence we are in the conditions of \cite[Thm. 7.9.5]{hp}, so we have  $K'$ is equivalent to the $\lambda$-fold replication of a simplex code.
Thus, the generator matrix $G_{k-1}$ is equivalent to the matrix
\vspace{-0.2cm}

$$
\left(\begin{matrix}
S_{k-1}^{(1)} & S_{k-1}^{(2)} & \cdots & S_{k-1}^{(\lambda)}
\end{matrix}\right)
,$$

where $S_{k-1}^{(i)}$ are the generator matrices of simplex codes of dimension $2^{k-1}$, with $i\in \{1,2,\ldots,\lambda\}$. As the weight of the rows of each simplex $S_{k-1}^{(i)}$ is $2^{k-2}$ and the length is $2^{k-1}-1$, for recovering the code $K$ we have to add exactly $\lambda$ all-zero columns. In fact, $4t=(2^{k-1}-1)\lambda+\alpha$ where $\alpha$ is the number of all-zero columns, so $\alpha=\lambda$. 
Denote by $K_1$ the set of codewords of $K$ that have $0$'s in the coordinates corresponding to the removed columns of $G_k$, and $K_2$ is the complement of $K_1$. Note that $K=K_1 \cup K_2$.
We can express the generator matrix of the kernel as,
\vspace{-0.2cm}

$$G_k=
\left(\begin{matrix}
\bu_{2^{k-1}} & \bu_{2^{k-1}} & \cdots & \bu_{2^{k-1}} \\
A_1 & A_2 & \cdots & A_\lambda
\end{matrix}\right)
,$$

for some blocks of coordinates $A_i$ with $i\in \{1,2,\ldots,\lambda\}$.
Omitting the first row, let's see that there are $2^{k-1}-1$ columns between an all-zero column and the next all-zero column in the same cycle of $\pi_\ba=(1,2,\ldots,2t)(2t+1,\ldots,4t)$. Note that \emph{in the same cycle of }$\pi_\ba$ means that the first column is the next column to the $2t$-th column. We suppose that the column $i$ and $i+j$ are all-zero columns and there are no all-zero columns between them, for some $i,j$ with $j < 2^{k-1}-1$.
We denote by $v[i]$ the $i$-th coordinate of $v$.
From \Cref{aut}, we have $\pi_\ba^m \in \Aut(K)$ since $\ba^m \in C$ for any $m$.
Thus, we have $\pi_\ba^m(v_l) \in K$ for $l\in \{2,\ldots,k\}$ and for any $m$. If $v_l[i-m]=x_{l,m}$ and $v_l[i+j-m]=y_{l,m}$, then $\pi^m_\ba(v_l)[i]=x_{l,m}$ and $\pi^m_\ba(v_l)[i+j]=y_{l,m}$.
As $\pi_\ba^m(v_l) \in K$ for any $m$, we have $x_{l,m}=y_{l,m}$.
Indeed, if $x_{l,m}=0$ and $y_{l,m}=1$, then $\pi_\ba^m(v_l) \in K_1$ since $\pi_\ba^m(v_l)[i]=0$, and $\pi_\ba^m(v_l) \in K_2$ since $\pi_\ba^m(v_l)[i+j]=1$, which is a contradiction.
Hence, we have the same blocks of coordinates between all-zero columns.
Therefore, if $j$ is less than $2^{k-1}-1$ contradicts that there are exactly $\lambda$ all-zero columns.
Thus, $A_1=A_2=\ldots=A_\lambda$ and there is an all-zero column in each $A$-block.
Note that there are no repetitions of columns in each $A$-block, since $G_{k-1}$ is a $\lambda$-fold replication of simplex.
Therefore, the generator matrix of the kernel has blocks with the following shape
\vspace{-0.2cm}

$$\left(\begin{matrix}
\bu_{4t/\lambda}\\
S_{k-1} \mid \zero \\
\end{matrix}\right),$$
where $\zero$ is the all-zero column.
The code generated by the previous block matrix is a cyclic linear Hadamard code of dimension $k$, which is the dual of a cyclic extended Hamming code. Note that the extended Hamming code is never cyclic, except for length 4. Thus, we have $k \leq 3$.  
\end{proof}

The following lemma will be useful in some proofs through the next sections. It is an improved version of a lemma which appears in \cite{Rifa2017}.
\begin{proposition}\label{parseval2}
Let $C$ be an Hadamard code of length $4t$ with dimension of the kernel $k$, and $s \in K(C)\setminus \langle \bu \rangle$. Then $C_s$ consists of two copies of an Hadamard code of length $2t$ and dimension of the kernel equal to $k-1$, where $C_s$ is the projection from $C$ onto $\Supp(s)$.
\end{proposition}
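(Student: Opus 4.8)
The plan is to normalise the coordinates, prove the projection is two-to-one, identify its image as a Hadamard code of half the length, and only then track the kernel. First I would record that, since $C$ is a Hadamard code and $s\in C$ with $s\neq\be,\bu$, the weight of $s$ equals $2t$; hence $\Supp(s)$ and its complement each have size $2t$, and $\bar s=s+\bu$ also lies in $K(C)$ because $K(C)$ is linear and contains $\bu$. After a coordinate permutation I may assume $\Supp(s)=\{1,\dots,2t\}$, so $s=(\bu,\be)$ and $\bar s=(\be,\bu)$ with each half of length $2t$. Writing each codeword as $c=(c_I,c_J)$, the projection onto $\Supp(s)$ is $c\mapsto c_I$. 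To see it is two-to-one, suppose $c_I=c'_I$; then $d_H(c,c')=d_H(c_J,c'_J)\le 2t$, and since distances in a Hadamard code lie in $\{0,2t,4t\}$ this forces either $c=c'$ or $d_H(c_J,c'_J)=2t$, i.e.\ $c'=c+\bar s$. Thus every fibre is a pair $\{c,c+\bar s\}$, so $C_s$ consists of two copies of its underlying set $D:=\{c_I:c\in C\}$, and $|D|=8t/2=4t$.

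Next I would prove that $D$ is a Hadamard code of length $2t$. Take $c,c'\in C$ whose projections $\delta=c_I$ and $\delta'=c'_I$ are distinct, and set $p=d_H(\delta,\delta')$, $q=d_H(c_J,c'_J)$. Because $s\in K(C)$ we have $c+s\in C$ and $c+s=(\bar\delta,c_J)$; comparing both $c$ and $c+s$ with $c'$ gives $p+q\in\{2t,4t\}$ together with $(2t-p)+q\in\{2t,4t\}$. Subtracting these two membership constraints forces $p\in\{0,t,2t\}$, and $p=0$ is impossible since $\delta\neq\delta'$; hence any two distinct codewords of $D$ are at distance $t$ or $2t$. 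As $D$ has $4t$ words of length $2t$, contains $\be$ and $\bu$, and is closed under complementation (for $c\in C$ we have $\bar c\in C$, so $\overline{c_I}=(\bar c)_I\in D$), it is a $(2t,4t,t)$-code, that is, a Hadamard code of length $2t$.

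For the kernel I would restrict the projection to $K(C)$. If $z\in K(C)$ then $C+z=C$ yields $D+z_I=(C+z)_I=C_I=D$, so $z_I\in K(C_s)$; thus $z\mapsto z_I$ is a linear map $K(C)\to K(C_s)$. Its kernel consists of the $z\in K(C)$ supported inside $\{2t+1,\dots,4t\}$; such a $z$ lies in $C$, hence has weight in $\{0,2t,4t\}$, and a support contained in a set of size $2t$ forces $\wt(z)\in\{0,2t\}$ and therefore $z\in\{\be,\bar s\}=\langle\bar s\rangle$. Consequently the restriction has one-dimensional kernel, and the image $\pi(K(C))\subseteq K(C_s)$ has dimension exactly $k-1$. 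This already gives the lower bound $\dim K(C_s)\ge k-1$, and it isolates precisely why the drop is at least one ($\bar s$ is destroyed by the projection) and at most one (nothing independent of $\bar s$ is destroyed).

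The delicate part, which I expect to be the main obstacle, is the matching upper bound: showing that every translation $w$ preserving $C_s$ actually comes from a translation preserving $C$, i.e.\ that $\pi\colon K(C)\to K(C_s)$ is onto. Concretely, writing $C$ through the fibre structure $\{(d,u(d)),(d,\overline{u(d)})\}_{d\in D}$, a vector $(w,b)$ lies in $K(C)$ exactly when $w\in K(C_s)$ and the ``discrete derivative'' $u(d+w)+u(d)$ is constant modulo $\langle\bu\rangle$ in $d$. I would try to establish this constancy from the equidistance of $C$ together with the fact that $w$ preserves $C_s$, lifting $w$ to the unique candidate codeword over $w$; this is the step where the full Hadamard structure (and not merely the distance bookkeeping of the earlier steps) has to be used, and it is where I would concentrate the technical effort before concluding $\dim K(C_s)=k-1$.
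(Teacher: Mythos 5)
Your first three steps are correct, and they are in fact more complete than what the paper itself writes down. The paper quotes \cite{Rifa2017} for the statement that $C_s$ consists of two copies of a Hadamard code of length $2t$ (you reprove this from scratch, correctly, via the two-to-one fibres $\{c,c+\bar s\}$ and the distance bookkeeping), and the paper's entire treatment of the kernel is the remark that $s$ projects to $\bu_{2t}$, so ``the kernel of $C_s$ decreases in one unit.'' That remark is nothing more than your rank--nullity computation for the linear map $z\mapsto z_I$ on $K(C)$, whose kernel is $\langle\bar s\rangle$: it shows that the image of $K(C)$ is a $(k-1)$-dimensional subspace of $K(C_s)$, i.e.\ $\dim K(C_s)\ge k-1$, and nothing more. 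So the step you isolated as the delicate one --- surjectivity of $K(C)\to K(C_s)$ --- is not proved in the paper either.

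Moreover, your suspicion about that step is justified: it is false in general, so you should not spend effort trying to close it. Take for $C$ a nonlinear Hadamard code of length $16$ with $k=3$; such a code exists, and the paper itself records one (the $\HFP(8,2,2_\bu)$-code with $r=6$, $k=3$ appearing in \Cref{tpower2} and in \Cref{table1} for $t=4$). For any $s\in K(C)\setminus\langle\bu\rangle$, your own argument shows that $C_s$ is two copies of an $(8,16,4)$-code $D$ containing $\be$ and $\bu$ and closed under complementation. But every such code is linear: choosing one word from each complementary pair (with $\be$ among them) gives a Hadamard matrix of order $8$, which is unique up to equivalence, and since $\be\in D$ the translation part of that equivalence is trivial, so $D$ is a coordinate permutation of the $[8,4,4]$ extended Hamming code. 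Hence $\dim K(D)=4\neq k-1=2$: after projection, translations that do not lift to $K(C)$ do appear. The proposition is therefore only correct as the inequality $\dim K(C_s)\ge k-1$ (equivalently, $k\le \dim K(C_s)+1$), which is exactly the part you proved. This is also the only direction the paper ever uses: in \Cref{eu-kernel-2t4u} and \Cref{tpower2}, the conclusion $k=2$ is drawn from a projection whose kernel has dimension $1$ together with the existence of $s\in K(C)\setminus\langle\bu\rangle$, and that argument needs only $k-1\le\dim K(C_s)$ and $k\ge 2$. So the right move is not to finish your last step but to weaken the statement to the inequality; your proposal then proves everything that is both true and actually needed.
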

\begin{proof}
In \cite{Rifa2017}, it is proved that $C_s$ consists of two copies of an Hadamard code of length $2t$. As we project the code $C$ over the support of $s$, we have that $s_s=\bu_{2t}$. Thus, the kernel of $C_s$ decreases in one unit.
\end{proof}

\subsection{\boldmath{$\HFP(4t_\bu,2)$}-codes} \label{4tu2}
In this subsection we assume that $C=\langle \ba,\bb : \bb^2=\bu^2=\be, \ba^{2t}=\bu \rangle$. 
\begin{proposition}\label{teven-4tu2}
Let $C$ be an $\HFP(4t_\bu,2)$-code of length $4t$ with $t > 1$. Then $t$ is even.
\end{proposition}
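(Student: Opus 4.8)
The plan is to translate the code-theoretic hypothesis into the language of Hadamard groups and then read off the parity of $t$ from the character values of the attached difference set. Since $C$ is an $\HFP$-code it is, by \cite{rs}, an Hadamard group of order $8t$ whose central involution is the all-one word $\bu$; here $\bu=\ba^{2t}$ lies inside the cyclic factor $\langle\ba\rangle\cong C_{4t}$. By \Cref{hg} (Ito's conditions) the Hadamard subset $D$ is a relative $(4t,2,4t,2t)$-difference set of $C$ relative to $\langle\bu\rangle$, and because $|D|=4t=[C:\langle\bu\rangle]$ the set $D$ meets each coset $\{x,x\bu\}$ exactly once, i.e.\ $D$ is a transversal. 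In the integral group ring this reads $DD^{(-1)}=4t+2t\,(C-\langle\bu\rangle)$, so for every character $\chi$ of $C$ one gets the dictionary: $\chi(D)=0$ when $\chi$ is nontrivial on $C$ but trivial on $\langle\bu\rangle$, and $|\chi(D)|^2=4t$ whenever $\chi(\bu)=-1$.

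I would then argue by contradiction, assuming $t$ odd. In that case $C\cong C_4\times C_t\times C_2$ and $\bu$ is precisely the involution of the cyclic $C_4$; in particular $\bu$ is a square, and since every index-$2$ subgroup of $C$ contains it, the extension does \emph{not} split over $\langle\bu\rangle$, so \Cref{ordersquare} cannot be invoked here. Feeding an order-$4$ character supported on the $C_4$-factor into the dictionary above, and using the order-$2$ characters trivial on $\bu$ to fix the coset counts $m_0+m_2=m_1+m_3=2t$, one obtains an equation of the shape $t=p^2+q^2$; thus $t$ would have to be a sum of two squares. By the remark following \Cref{HFP2t2}, for odd $t$ the code is moreover isomorphic to an $\HFP(2t,4_\bu)$-code, hence to a circulant complex Hadamard matrix of order $2t\equiv 2\pmod 4$, which is the object one must finally show cannot exist for $2t>2$.

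The genuine obstacle is exactly this parity step: the sum-of-two-squares constraint coming from the order-$4$ characters is by itself too weak (it still permits $t=5,9,13,\dots$), so the decisive input must come from the full spectrum of $D$. The route I would pursue is a half-period argument on the quaternary sequence $s$ attached to $D$: writing $p_j=s_j\overline{s_{j+t}}$, the vanishing correlation $\sum_j p_j=0$ together with $p_{j+t}=\overline{p_j}$ forces a parity count over the $t$ (odd) indices $j=0,\dots,t-1$, which I would try to play against an independent count of the real/imaginary positions of $s$ to reach a contradiction unless $2t=2$; equivalently one runs a self-conjugacy/field-descent argument on the prime $2$ in $\Z[\zeta_{4m}]$ for each $m\mid t$, in the spirit of Turyn (\Cref{remarkTuryn}). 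I expect essentially all the difficulty to concentrate in completing this nonexistence for order $\equiv 2\pmod 4$, the reduction to a transversal relative difference set and the character dictionary being routine.
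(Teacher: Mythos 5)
Your setup is sound as far as it goes: the translation into an Hadamard group with Hadamard subset $D$, the transversal property, the character dictionary ($\chi(D)=0$ for characters trivial on $\bu$, $|\chi(D)|^2=4t$ otherwise), and the correct observation that \Cref{ordersquare} cannot be used because $\bu=\ba^{2t}$ is a square in $C$, so $C$ admits no splitting $N\times\langle\bu\rangle$ over the central involution. But the proof has a genuine gap, and it sits exactly at the heart of the statement. By the paper's own chain of equivalences (the remark after \Cref{HFP2t2}, \Cref{2t4u-even}, and the Arasu--de Launey--Ma correspondence), ``no $\HFP(4t_\bu,2)$-code with $t$ odd, $t>1$'' \emph{is the same statement as} ``no circulant complex Hadamard matrix of order $2t\equiv 2 \pmod 4$, $2t>2$.'' So your reduction is a reformulation, not progress, and everything you actually establish afterwards (the coset counts, $|\chi(D)|^2=4t$, the consequence $t=p^2+q^2$) is -- as you yourself concede -- compatible with infinitely many odd $t$. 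The decisive nonexistence step is then only gestured at (``the route I would pursue\ldots'', ``I would try to play\ldots against an independent count''): no half-period computation is carried out, no contradiction is exhibited, and nothing guarantees the field-descent strategy closes; a self-conjugacy argument at the prime $2$ in $\Z[\zeta_{4m}]$ is delicate precisely because $2$ ramifies in $\Z[i]$. You also do not cite any result of \cite{Arasu2002} that would settle orders $\equiv 2\pmod 4$, so the proposal, taken as written, proves nothing beyond what it assumes.

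For comparison, the paper's proof is elementary, self-contained, and entirely different in flavor: it is a rank argument. For $t$ odd, \Cref{rankkernel}(i) (ultimately \Cref{aske}(ii), Assmus--Key) forces the rank of any Hadamard code of length $4t$ to equal $4t-1$. The paper then takes the Hadamard matrix $H$ whose rows are $\ba,\ba^2,\ldots,\ba^{2t-1},\be,\bb,\bb\ba,\ldots,\bb\ba^{2t-1}$ and sums the first $2t$ rows, obtaining a vector $w$ with $\pi_\ba(w)=w$, hence $w\in\{\be_{4t},\bu_{4t},(\be_{2t},\bu_{2t}),(\bu_{2t},\be_{2t})\}$. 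If $w=\be_{4t}$ or $w=\bu_{4t}$, explicit linear dependencies among the rows force $r\leq 4t-2$; in the two mixed cases the sum of \emph{all} rows of $H$ is $\bu_{4t}$, so every column of $H$ has odd weight, which is incompatible with $H$ being an Hadamard matrix for $t>1$. Either way $r<4t-1$, contradicting the rank constraint, so $t$ must be even. If you want to keep the difference-set framework, the honest repair is to supply a complete nonexistence proof for circulant complex Hadamard matrices of order $\equiv 2\pmod 4$ (or cite one); as it stands, that missing proof is precisely the proposition.
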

\begin{proof}
Suppose that $t$ is odd. Thus, $C_{4t} \times C_2 \simeq C_{2t} \times C_{4}$ and an $\HFP(4t_\bu,2)$-code is equivalent to an $\HFP(2t,4_\bu)$-code. Without loss of generality, we suppose that $C=\langle \ba, \bb\rangle$ where $\ba$ has order $2t$, $\bb$ has order $4$ and $\bb^2=\bu$. From \Cref{rankkernel}, $r=4t-1$.
We know that the rank of $C$ is $r \leq \text{rank}(H)+1$ (due to the vector $\bu$) but $r \leq \text{rank}(H)$ if $\bu$ is a combination of rows of $H$, where

$$H=\left( \begin{array}{l}
\ba \\
\ba^2 \\
\vdots \\
\ba^{2t-1} \\
\be \\
\bb \\
\bb\ba \\
\bb\ba^2 \\
\vdots \\
\bb\ba^{2t-1}
\end{array}\right)=
\left( \begin{array}{l}
\ba \\
\ba + \pi_\ba(\ba)\\
\vdots\\
\ba + \pi_\ba(\ba^{2t-2})\\
\ba+ \pi_\ba(\ba^{2t-1})\\
\bb \\
\bb + \pi_\bb(\ba) \\
\bb + \pi_\bb(\ba^2) \\
\vdots \\
\bb + \pi_\bb(\ba^{2t-1})
\end{array}\right).$$
If we sum the first half of rows, then we obtain 
$\ba+\ba^2+\ldots+\ba^{2t-1}=
\pi_\ba(\ba)+\ldots+\pi_\ba(\ba^{2t-1})=
\pi_\ba(\ba+\ldots+\ba^{2t-1})$.
Thus $\ba+\ba^2+\ldots+\ba^{2t-1} = w$, such that $w=\pi_\ba(w)$. Therefore $w \in \{\be_{4t}, \bu_{4t}, (\be_{2t},\bu_{2t}), (\bu_{2t},\be_{2t})\}$.
If $w=\be_{4t}$, then in the first half of rows there is at most $2t-2$ independent rows, but if $w\in\{\bu_{4t},(\be_{2t},\bu_{2t}),(\bu_{2t},\be_{2t})\}$, then there is at most $2t-1$ independent rows.
Making the sum of the second half of rows we obtain $\pi_\bb(\ba+\ba^2+\ldots+\ba^{2t-1})$, so the number of independent rows in the second half of rows is at most equal to the number of independent rows in the first half plus one, due to the vector $\bb$.
Therefore, if $w=\be_{4t}$ the rank of $H$ is at most $4t-3$, and so $r \leq 4t-2$. But if $w=\bu_{4t}$ then $\bu_{4t}$ appears as combination of the rows of the first half and also as combination of the rows of the second half, then the rank of the matrix $H$ is at most $4t-2$, and so $r \leq 4t-2$.
If $w\in \{(\be_{2t},\bu_{2t}),(\bu_{2t},\be_{2t})\}$, then the sum of all rows of $H$ is the vector $\bu_{4t}$ and so in each column of $H$ there is an odd amount of ones, which contradicts that $H$ is an Hadamard matrix for $t > 1$. Therefore, $r<4t-1$ which contradicts that $t$ is odd by \Cref{rankkernel}.
\end{proof}

\begin{proposition}\label{rank2t}
Let $C$ be a nonlinear $\HFP(4t_{\bu},2)$-code. If $t$ is a power of two, then $r=2t$.
\end{proposition}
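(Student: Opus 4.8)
The upper bound is essentially free, so the whole difficulty lies in the matching lower bound. Since $t$ is a power of two and the code is nonlinear we have $t\ge 2$; hence by \Cref{teven-4tu2} the integer $t$ is even, and \Cref{rankkernel}(ii) (equivalently \Cref{aske}(i)) gives $r\le 2t$. My plan is therefore to prove $r\ge 2t$, and to do so not by analysing the whole code but by exhibiting $2t$ independent vectors already inside the linear span of the powers of $\ba$ alone.

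By \Cref{genpermutations} I may assume $\pi_\ba=(1,2,\dots,2t)(2t+1,\dots,4t)$, so on the first half of coordinates $L=\{1,\dots,2t\}$ it acts as a single cyclic shift $\sigma$ of $\F^{2t}$. Writing $\ba=(\ba_L\mid\ba_R)$ with $\ba_L,\ba_R\in\F^{2t}$, the first step is to pin down the parity of $\wt(\ba_L)$. Using $\ba^{2t}=\bu$ together with \Cref{xi}, the $L$-component of $\ba^{2t}=\sum_{l=0}^{2t-1}\pi_\ba^{l}(\ba)$ is $\sum_{l=0}^{2t-1}\sigma^{l}(\ba_L)=(\wt(\ba_L)\bmod 2)\,\bu_{2t}$; since this must equal $\bu_{2t}$, the vector $\ba_L$ has odd weight.

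The decisive step exploits that $2t$ is a power of two. Identifying $\F^{2t}$ with $\F[x]/(x^{2t}-1)$ and $\sigma$ with multiplication by $x$, the $\F$-span of the shifts $\{\sigma^{l}(\ba_L):0\le l<2t\}$ is the cyclic code generated by $\ba_L$, of dimension $2t-\deg\gcd(\ba_L(x),x^{2t}-1)$. Over $\F$ one has $x^{2t}-1=(x+1)^{2t}$, and oddness of $\wt(\ba_L)$ means $\ba_L(1)\ne 0$, so $(x+1)\nmid\ba_L(x)$; hence the gcd is $1$ and the shifts $\sigma^{l}(\ba_L)$ span all of $\F^{2t}$. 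To transfer this to ranks, I note that $\ba^{l+1}$ and $\ba^{l}$ are codewords and $\pi_\ba^{l}(\ba)=\ba^{l+1}+\ba^{l}$ by \Cref{xi}, so (using $\ba^{2t}=\bu\in C$) every shift $\pi_\ba^{l}(\ba)$, $0\le l<2t$, lies in the linear span of $C$. Projecting onto $L$, this span surjects onto $\mathrm{span}\{\sigma^{l}(\ba_L)\}=\F^{2t}$, whence $r=\dim\langle C\rangle\ge 2t$, and combined with $r\le 2t$ this yields $r=2t$.

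The main obstacle is the decisive step: recognising that the cyclic span of $\ba_L$ is forced to be the full space. This is precisely where the hypothesis ``$t$ a power of two'' enters, through the factorisation $x^{2t}-1=(x+1)^{2t}$ in characteristic two, which turns the single odd-weight condition on $\ba_L$ (inherited from $\ba^{2t}=\bu$) into triviality of the gcd. The only points I would check carefully are the boundary conventions — the cap on the multiplicity of $x+1$ and the inclusion of $\bu=\ba^{2t}$ among the powers $\ba^{i}$ — since those are where an off-by-one could slip in.
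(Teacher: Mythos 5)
Your proof is correct and follows essentially the same route as the paper: both derive the upper bound $r\le 2t$ from \Cref{rankkernel}, and both obtain the lower bound by viewing the halves of $\ba$ as polynomials in $GF(2)[x]/(x^{2t}-1)$, using $\ba^{2t}=\bu$ to force odd weight and the factorization $x^{2t}-1=(x-1)^{2t}$ (the point where the power-of-two hypothesis enters) to make the relevant gcd trivial. The only cosmetic difference is that you project onto the first half alone, getting $\gcd(\ba_L(x),x^{2t}-1)=1$, whereas the paper works with both halves at once via $\operatorname{rank}(A)=2t-\deg\gcd(a_1,a_2,x^{2t}-1)$; both give rank at least $2t$.
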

\begin{proof}
We suppose that $t$ is a power of two, then $4t=2^s$. Thus, the rank of $C$ is greater than or equal to the rank of the matrix generated by $\ba =(a_1,a_2)$, where $a_1$ and $a_2$ are the first and the second half of components, respectively, of the generator $\ba$. 
Vectors in $C$ can be written as polynomials in $GF(2)[x]/(x^n-1)$, where the coordinates of the vector have been substituted by the coefficients of the polynomial.
Since $\ba^{2t}=\bu$, we have that $\wt(a_i)$ is odd, so $a_i$ does not contain the factor $(x-1)$, for $i=1,2$. Let $A$ be the matrix whose rows are $\ba,\ba^2,\ldots,\ba^{2t}$, note that $rank(A)=2t-\deg(\Gcd(a_1,a_2,x^{2^{s-1}}-1))$. Since $a_1$ and $a_2$ do not contain the factor $(x-1)$, and $x^{2^{s-1}}-1 = (x-1)^{2^{s-1}}$, we have that $\deg(\Gcd(a_1,a_2,x^{2^{s-1}}-1))=0$. Thus $rank(C) \geq rank(A)=2t$. From \Cref{rankkernel}, we have that $r\leq 2t$, so $r=2t$.
\end{proof}

\begin{proposition}\label{b-4tu2}
Let $C$ be an $\HFP(4t_{\bu},2)$-code. Then the generator $\bb$ does not belong to the kernel of $C$.
\end{proposition}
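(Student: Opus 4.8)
The plan is to translate the statement into a condition on permutations and then exploit the rigid ``two equal halves'' shape that $\bb$ is forced to have. Assume, for contradiction, that $\bb\in K(C)$. By \Cref{aut}(i) this is equivalent to $\pi_\bb\in\Aut(C)$, and by \Cref{genpermutations} we may take $\pi_\ba=(1,2,\ldots,2t)(2t+1,\ldots,4t)$ and $\pi_\bb=(1,2t+1)\cdots(2t,4t)$. Write $\sigma$ for the cyclic shift of length $2t$, so that $\pi_\ba$ acts as $\sigma$ on each half and $\pi_\bb$ swaps the two halves. Since here $\bb^2=\be$, \Cref{genpermutations} gives $b_i=b_{2t+i}$, i.e. $\bb=(\beta,\beta)$ with two identical halves; and because $\bb\neq\be,\bu$ we have $\wt(\bb)=2t$, hence $\wt(\beta)=t$.

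First I would run an orbit argument inside the kernel. As $\ba\in C$, \Cref{aut}(iii) gives $\pi_\ba\in\Aut(K(C))$, so the whole orbit $\{\pi_\ba^{m}(\bb)=(\sigma^m\beta,\sigma^m\beta):m\ge 0\}$ lies in $K(C)$. Thus $V:=\{\gamma\in\F^{2t}:(\gamma,\gamma)\in K(C)\}$ is a nonzero $\sigma$-invariant subspace of $\F^{2t}$ — that is, a cyclic code — containing both $\beta$ (of weight $t$) and $\bu_{2t}$. One sub-case is then immediate: if $\sigma\beta=\beta$, then $\beta$ is fixed by the full cyclic shift, so $\beta\in\{\be_{2t},\bu_{2t}\}$ and $\wt(\beta)\in\{0,2t\}$, contradicting $\wt(\beta)=t$ (here $t>1$, so $t\neq 0,2t$).

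It remains to exclude the case where $\beta$ is not shift-fixed, and I expect this to be the crux. Here I would use that \Cref{genpermutations} determines $\bb$ from $\ba$ through $\widehat{\ba}=\ba+\pi_\bb(\ba)=(a_1+a_2,a_1+a_2)$: forcing $\beta$ into the only shape compatible with the cyclic structure of $V$ drives $\widehat{\ba}=\bu$, equivalently $a_2=\overline{a_1}$, so that $\ba=(a_1,\overline{a_1})$ and $\bb$ is the alternating word $\bomega_{4t}$. A short computation via \Cref{xi} then shows $\ba^{2}=(\delta,\delta)$ with $\delta=a_1+\sigma a_1$, and inductively that the even powers of $\ba$ have identical halves while the odd powers have complementary halves; the Hadamard weight condition is automatic on odd powers but forces every nontrivial even power to have weight-$t$ halves, so that $\langle\ba^{2}\rangle$ restricts to a constant-weight-$t$ cyclic configuration of length $2t$ of circulant-Hadamard type. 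Closing this last case is the main obstacle: one wants to show it cannot coexist with $\bb\in K(C)$ for a nonlinear $C$, either by proving that it forces $C$ to be linear (whereas the range of interest is nonlinear — note that for the linear length-$8$ code one genuinely has $\bb\in K(C)$), or by descending through \Cref{parseval2} to a strictly shorter Hadamard code and iterating, or by reading off a circulant Hadamard matrix whose order is ruled out by Turyn's necessary conditions (\Cref{remarkTuryn}). The shift-fixed sub-case above is routine; essentially all the difficulty is concentrated in excluding this alternating configuration.
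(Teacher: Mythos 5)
Your setup (the forced shape $\bb=(\beta,\beta)$ with $\wt(\beta)=t$, the cyclic code $V$, and the shift-fixed sub-case) is correct, but there is then a genuine gap: the assertion that the remaining case ``drives $\widehat{\ba}=\bu$, equivalently $a_2=\overline{a_1}$, so that $\bb=\bomega_{4t}$'' is unjustified and false in general; the cyclic structure of $V$ does not force $\beta$ to be alternating. What the paper actually exploits is that $\bb\in K(C)$ makes \emph{both} $\bb\ba=\bb+\pi_\bb(\ba)=(\beta+a_2,\beta+a_1)$ and $\bb+\ba=(\beta+a_1,\beta+a_2)$ codewords, and their distance is $2\wt(a_1+a_2)$; since distances in an Hadamard code lie in $\{0,2t,4t\}$, this yields the trichotomy $\wt(a_1+a_2)\in\{0,t,2t\}$. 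The value $0$ is your shift-fixed case and the value $2t$ is your alternating case, but the generic value $t$ is entirely absent from your proposal, and it is the heart of the proof. The paper handles it by running the same argument on every power $\ba^i$ to get $\wt(a_1^i+a_2^i)=t$ for all $i$, and then using \Cref{xi} to compute $d\big(a_1^i+a_2^i,\,a_1^j+a_2^j\big)=\wt\big(\pi_{\ba^i}(a_1^{j-i}+a_2^{j-i})\big)=t$; the vectors $a_1^i+a_2^i$ then form a cyclic Hadamard code of length $2t$, which is impossible by \Cref{cyclic}.

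Moreover, even the alternating case that you do isolate is left open: you list three candidate strategies (forcing linearity, descent via \Cref{parseval2}, Turyn's conditions via \Cref{remarkTuryn}) without carrying out any of them, and none of them is what the paper does. The paper closes this case with the same lemma as above: when $\ba=(a_1,\overline{a_1})$ and $\bb=\bomega_{4t}$, the even powers satisfy $\ba^{2i}=(a_1^{2i},a_1^{2i})$, and the projections $a_1^{2i}|_\beta$ onto $\Supp(\beta)$ form an Hadamard matrix of order $t$ whose rows are generated cyclically by $a_1^2|_\beta$, again contradicting \Cref{cyclic} (this argument needs $t\geq 4$; the paper disposes of $t=3$ separately by noting $k=1$ there). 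Your side remark that the statement genuinely fails for the small linear codes (e.g.\ length $8$) is a fair caveat about how the proposition must be read, but it does not repair either gap. In short, the missing idea is the distance trichotomy obtained by comparing $\bb\ba^i$ with $\bb+\ba^i$, and both contradictions in the paper flow from \Cref{cyclic}, not from the routes you suggest.
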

\begin{proof}
We are going to show that $\bb\notin K(C)$. Assume the contrary and take $\ba=(a_1,a_2)$. We know that $\bb=(\beta,\beta)$, where $\wt(\beta)=t$. If $\bb\in K(C)$ then $\bb\ba,\bb+\ba \in C$ so $(\beta+a_2,\beta+a_1),(\beta+a_1,\beta+a_2) \in C$ and both vectors should be at distance $2t$, so $\wt(a_1+a_2)=t$. Indeed, if $d(\ba\bb,\ba+\bb)=0$, then $\bb=\pi_\ba(\bb)$ so $\bb=\bu$ or $\be$, which is impossible. If $d(\ba\bb,\ba+\bb)=4t$ then $\ba=(a_1,a_1\bu)$ and $\bb=\bomega_{4t}$ (or its complement). We denote, for any $j$, the first and the second half of $\ba^j$ by $a^j_1$ and $a^j_2$, respectively. 
We have that $\ba^{2i}=(a^{2i}_1,a^{2i}_1)$ for $i\in\{1,\ldots,t\}$, then the projection of the first half $a^{2i}_1$ onto the support of $\beta$, $a^{2i}_1|_\beta$, conform an Hadamard matrix of length $t$, and $a^2_1|_\beta$ generates a cyclic group, which is impossible from \Cref{cyclic}. 
Note that this lemma is only applicable if the parameter $t$ is greater than or equal to $4$, but if $t=3$ it is clear that $k=1$.
Thus, $d(\ba\bb,\ba+\bb)=2t$ and $\wt(a_1+a_2)=t$.
Now, using the same argument for all elements $\ba^i$ of $C$ instead of $\ba$, we obtain $\wt(a_1^i+a_2^i)= t$ and also the distance from $(a_1^i+a_ 2^i)$ to $(a_1^j+a_2^j)$ is $t$. Indeed, $d\big((a_1^i+a_ 2^i),(a_1^j+a_2^j)\big)=\wt(a_1^i+a_1^j+a_ 2^i+a_2^j)$, and using \Cref{xi}, $\wt(a_1^i+a_1^j+a_ 2^i+a_2^j)=\wt(\pi_{\ba^i}(a_1^{j-i}+a_2^{j-i}))=\wt(a_1^{j-i}+a_2^{j-i})=t$. Hence, the elements $(a_1^i+a_2^i)$, for $i\in\{1,\ldots,4t\}$, give a cyclic Hadamard code of length $2t$, which is impossible. Therefore, $\bb \not\in K(C)$.
\end{proof}

\begin{proposition}\label{eu-4tu2}
Let $C$ be an $\HFP(4t_\bu,2)$-code. The vectors $(\be_{2t},\bu_{2t})$, $\bomega_{4t}$, and $(\bomega_{2t},\bomega_{2t}\bu)$ are not in the kernel of $C$ for $t\geq 4$.
\end{proposition}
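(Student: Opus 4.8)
The plan is to assume, for each of the three vectors $s$, that $s\in K(C)$ and to derive a contradiction; since $\bu\in K(C)$ (by \Cref{perm}) and $K(C)$ is linear, $s$ and $s+\bu$ lie in $K(C)$ together, so it suffices to treat one representative of each. Throughout I identify each half of a codeword with a polynomial in $\F[x]/(x^{2t}-1)$, so that $\pi_\ba$ acts as multiplication by $x$ on each half; writing $\ba=(a_1,a_2)$, \Cref{xi} gives $\ba^{i}=\big(a_1(1+x+\cdots+x^{i-1}),\,a_2(1+x+\cdots+x^{i-1})\big)$, and $\ba^{2t}=\bu$ forces $\wt(a_1)$ and $\wt(a_2)$ to be odd. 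Recall also that $t$ is even (\Cref{teven-4tu2}) and that every $\ba^{i}\neq\be,\bu$ has weight $2t$.

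For $s=(\be_{2t},\bu_{2t})$ the argument is short and self-contained. If $s\in K(C)$ then $\ba+s=(a_1,a_2+\bu_{2t})\in C$, and this codeword has weight $\wt(a_1)+(2t-\wt(a_2))=4t-2\wt(a_2)$. As it is a Hadamard codeword its weight is $0$, $2t$ or $4t$, so $\wt(a_2)\in\{0,t,2t\}$. The two extreme values force $\ba=(\bu_{2t},\be_{2t})$ or $\ba=(\be_{2t},\bu_{2t})$, whence $\ba^{2}=\be$, contradicting that $\ba$ has order $4t$. Hence $\wt(a_2)=t$; but $t$ is even, contradicting that $\wt(a_2)$ is odd. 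Thus $(\be_{2t},\bu_{2t}),(\bu_{2t},\be_{2t})\notin K(C)$.

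For the alternating vectors $\bomega_{4t}$ and $(\bomega_{2t},\bomega_{2t}\bu)$ the weight test above is inconclusive (it is satisfied by a genuine weight-$2t$ codeword), so a deeper argument is needed. Assuming $s$ is one of them in $K(C)$, I would first invoke \Cref{parseval2}: projecting $C$ onto $\Supp(s)$ yields two copies of a Hadamard code of length $2t$. I would then reuse the mechanism of \Cref{b-4tu2}: from $\ba^{j}=\ba^{i}*\ba^{j-i}$ and \Cref{xi} one gets $\ba^{i}+\ba^{j}=\pi_\ba^{i}(\ba^{j-i})$, so that the length-$2t$ vectors $c_i:=a_1^{i}+a_2^{i}$ satisfy the cyclic relation $c_i+c_j=\sigma^{i}(c_{j-i})$, where $\sigma$ is the cyclic shift; since $c_0=\be$, this gives $c_i=c_1(1+x+\cdots+x^{i-1})$. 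Hence the $c_i$ are the powers of a single codeword under the induced propelinear operation, and together they form a cyclic Hadamard propelinear code of length $2t$, which is impossible for $t\geq 4$ by the argument of \Cref{b-4tu2} (via \Cref{cyclic}).

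The main obstacle is precisely the step that \Cref{b-4tu2} obtains for free from $\bb=(\beta,\beta)\in K(C)$ but which is not immediate here: showing that the cyclically structured family produced from $s$ is genuinely \emph{equidistant} (of constant weight), and hence Hadamard, rather than merely cyclic. This is where the hypothesis $t\geq 4$ is essential---for $t=2$ such an alternating vector can in fact belong to the kernel---so the argument must exploit the even/odd coordinate bookkeeping forced by $s\in K(C)$ (the analogue of the reduction $\ba=(a_1,a_1\bu)$ in \Cref{b-4tu2}) to pin down the weights of the $c_i$ before the cyclic-code contradiction can be applied.
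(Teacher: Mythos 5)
Your argument for $(\be_{2t},\bu_{2t})$ is correct and genuinely different from the paper's. The paper identifies $(\be_{2t},\bu_{2t})$ as the group element $\ba^t\bb$, uses commutation relations to pin down the shape of $\bb$ and $\ba^i\bb$, and then produces a cyclic $\HFP$-code of length $2t$ contradicting \Cref{cyclic}. Your route --- $\ba+s\in C$ forces $\wt(a_2)\in\{0,t,2t\}$, the extremes are killed by $\ba^2=\be$ against the order of $\ba$, and the middle value is killed because $t$ is even (\Cref{teven-4tu2}) while $\ba^{2t}=\bu$ forces $\wt(a_2)$ odd --- is shorter and more elementary, and it is sound.

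For $\bomega_{4t}$ and $(\bomega_{2t},\bomega_{2t}\bu)$, however, there is a genuine gap, which you yourself flag: you never establish that the family $c_i=a_1^i+a_2^i$ is equidistant, and without constant weight there is no Hadamard code of length $2t$ and hence no contradiction with \Cref{cyclic}. The missing idea is not better bookkeeping on the $c_i$; it is a different first move. Since $K(C)\subseteq C$, a kernel vector must literally be one of the group elements $\ba^i$ or $\ba^i\bb$, and computing its square in $(C,*)$ identifies which one. For $\bomega_{4t}$ this already finishes the proof with no projection argument at all: its square is $\be$ when $i$ is even and $\bu$ when $i$ is odd; the even case forces $\bomega_{4t}\in\{\bb,\bb\bu\}$, which is excluded by \Cref{b-4tu2} (a result you already invoke) together with $\bu\in K(C)$, and the odd case forces $i\equiv t\pmod{2t}$ with $i$ odd, so $t$ would be odd, contradicting \Cref{teven-4tu2}. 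For $(\bomega_{2t},\bomega_{2t}\bu)$ the same square computation forces it to equal $\ba^t\bb$; then the commutation $\ba^{2i}\,\ba^t\bb=\ba^t\bb\,\ba^{2i}$ yields $\ba^{2i}=(\alpha_{2i,1},\alpha_{2i,2},\alpha_{2i,2},\alpha_{2i,1})$, and \Cref{parseval2}, applied to the now-explicit kernel element $\ba^t\bb$, supplies exactly the equidistance you could not obtain: projecting the first halves of the $\ba^{2i}$ onto $\Supp(\bomega_{2t})$ gives a cyclic Hadamard code of length $t$, contradicting \Cref{cyclic} for $t\geq 4$. In short, the equidistance cannot be reconstructed ``from scratch'' as your sketch attempts; it falls out of \Cref{parseval2} once the kernel vector is identified as a specific group element, and that identification step is what your proposal lacks.
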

\begin{proof}
Firstly, let $(\be_{2t},\bu_{2t})$ be in $K(C)$. Note that $(\be_{2t},\bu_{2t})= \ba^t\bb$. Indeed, if $(\be_{2t},\bu_{2t})=\ba^i$ for some $i$, we have that $(\be_{2t},\bu_{2t})^2=\be$ which is not possible. If $(\be_{2t},\bu_{2t})= \ba^i\bb$ for some $i$, we have that $(\be_{2t},\bu_{2t})^2=\bu$, so $i=t$. 
From \Cref{parseval2}, the projection of $C$ over the support of $(\be_{2t},\bu_{2t})=\ba^t\bb$ consists of two copies of an Hadamard code. 
We denote by $x|_{(\be_{2t},\bu_{2t})}$ the projection of a vector $x$ over the support of $(\be_{2t},\bu_{2t})$.
As $\bb^2=\be$ and $\bb \,\ba^t\bb=\ba^t\bb\,\bb$, we have that $\bb=(\beta,\beta\bu,\beta,\beta\bu)$ for some $\beta$.
Thus, $\ba^t=(\beta\bu,\beta,\beta,\beta\bu)$ and $\ba^t|_{(\be_{2t},\bu_{2t})}=\bb|_{(\be_{2t},\bu_{2t})}$.
As $\ba^i\bb\,\ba^t\bb=\ba^t\bb\,\ba^i\bb$, we have that $\ba^i\bb=(\alpha_{i,1}\bu,\alpha_{i,2}\bu,\alpha_{i,2},\alpha_{i,1})$, for some $\alpha_{i,j}$ where $i\in \{1,\ldots,2t\}$ and $j \in \{1,2\}$. 
Therefore, $\ba^{t+i}=(\alpha_{i,1},\alpha_{i,2},\alpha_{i,2},\alpha_{i,1})$ and $\ba^i\bb|_{(\be_{2t},\bu_{2t})}=\ba^{t+i}|_{(\be_{2t},\bu_{2t})}$.
Let $\alpha$ be the vector $\ba|_{(\be_{2t},\bu_{2t})}$ of length $2t$, and let $\pi_\alpha=(1,\ldots,2t)$. Note that $\alpha^{2t}=\ba^{2t}|_{(\be_{2t},\bu_{2t})}=\bu_{2t}$. We have that $\langle \alpha \rangle$ is a cyclic $\HFP$-code of length $2t$, which contradicts \Cref{cyclic} for $t\geq 2$.

Now, we suppose that $\bomega_{4t}\in K(C)$. We have that $\bomega_{4t} \in \{ \ba^i, \ba^i\bb \}$ for some $i$. If $i$ is even, then $\bomega_{4t}^2=\be$, so $\bomega_{4t} \in \{\bb,\bb\bu\}$ which contradicts \Cref{b-4tu2}. If $i$ is odd, then $\bomega_{4t}^2=\bu$, so $i=t$, which contradicts \Cref{teven-4tu2}.

Finally, let $(\bomega_{2t},\bomega_{2t}\bu)$ be in $K(C)$. We will see that $(\bomega_{2t},\bomega_{2t}\bu)=\ba^t\bb$.
We suppose that $(\bomega_{2t},\bomega_{2t}\bu)=\ba^i$ for some $i$. If $i$ is even, then $(\bomega_{2t},\bomega_{2t}\bu)^2=\be$, which is not possible. If $i$ is odd, then $(\bomega_{2t},\bomega_{2t}\bu)^2=\bu$, so $i=t$ which contradicts \Cref{teven-4tu2}.
Now, we suppose that $(\bomega_{2t},\bomega_{2t}\bu)=\ba^i\bb$ for some $i$. If $i$ is odd, then $(\bomega_{2t},\bomega_{2t}\bu)^2=\be$ which is not possible. If $i$ is even, then $(\bomega_{2t},\bomega_{2t}\bu)^2=\bu$, so $i=t$. Hence, $\ba^t\bb=(\bomega_{2t},\bomega_{2t}\bu)$.
As $\ba^{2i}\ba^t\bb=\ba^t\bb \ba^{2i}$, we have that $\ba^{2i}=(\alpha_{2i,1},\alpha_{2i,2},\alpha_{2i,2},\alpha_{2i,1})$ for some $\alpha_{2i,j}$, with $i\in \{1,\ldots,t\}$ and $j\in \{1,2\}$. From \Cref{parseval2}, the projection of $C$ over the support of $(\bomega_{2t},\bomega_{2t}\bu)$ has an Hadamard structure. Taking the first half of coordinates of $\ba^{2i}$ and projecting it over the support of $\bomega_{2t}$ we obtain a cyclic Hadamard code of length $t$, which contradicts \Cref{cyclic} for $t\geq 4$.
\end{proof}

\begin{proposition}\label{kneq2}
Let $C$ be an $\HFP(4t_\bu,2)$-code. Then $k\neq 2$ for $t\geq 4$.
\end{proposition}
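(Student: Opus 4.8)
The plan is to argue by contradiction: assume $k=2$, so that $K(C)=\langle \bu, s\rangle=\{\be,\bu,s,s+\bu\}$ for some codeword $s$. Since $C$ is a Hadamard code, every nonzero codeword other than $\bu$ has weight $2t$, so in particular $\wt(s)=2t$. The whole argument then rests on one structural observation: because $\ba\in C$, \Cref{aut} gives $\pi_\ba\in\Aut(K(C))$, so $\pi_\ba$ permutes the four words of $K$. As $\pi_\ba$ preserves Hamming weight and the only weight-$2t$ words of $K$ are $s$ and $s+\bu$, we must have $\pi_\ba(s)\in\{s,\,s+\bu\}$. This confines $s$ to a very short list of highly symmetric vectors, each of which has already been forbidden.

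First I would treat the case $\pi_\ba(s)=s$. Here $s$ is fixed by $\pi_\ba=(1,2,\ldots,2t)(2t+1,\ldots,4t)$, hence constant on each of its two $2t$-cycles; the weight-$2t$ vectors with this property are exactly $(\be_{2t},\bu_{2t})$ and its complement $(\bu_{2t},\be_{2t})$. Since $\bu\in K$ and $K$ is linear, one of these lies in $K$ if and only if the other does, and $(\be_{2t},\bu_{2t})$ is excluded by \Cref{eu-4tu2} (valid for $t\ge 4$). So this case is impossible.

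Next I would treat $\pi_\ba(s)=s+\bu$. Applying $\pi_\ba$ once more gives $\pi_\ba^2(s)=s$, so $s$ is constant on the four $t$-cycles of $\pi_\ba^2$; moreover the relation $\pi_\ba(s)=s+\bu$ forces consecutive coordinates along each $2t$-cycle of $\pi_\ba$ to differ (which is consistent precisely because $2t$ is even). Hence the restriction of $s$ to each half is $\bomega_{2t}$ or its complement, independently, so $s$ is one of $\bomega_{4t}$, $(\bomega_{2t},\bomega_{2t}\bu)$, or their $\bu$-complements. All four are excluded by \Cref{eu-4tu2}, again using that complementation by $\bu$ keeps us inside the linear space $K$. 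Both cases contradict $k=2$, which proves the claim.

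The substantive content lies not in this proposition but in \Cref{eu-4tu2} (and the groundwork of \Cref{b-4tu2}), which do the real work of ruling out the candidate kernel words; the only genuine step here is recognizing that the $\pi_\ba$-invariance of the kernel together with weight preservation confines $s$ to the handful of vectors already handled. Accordingly, I expect no deep obstacle: the point requiring care is the exhaustive-but-short enumeration of the $\pi_\ba$-fixed and $\pi_\ba^2$-fixed words of weight $2t$, together with the bookkeeping that a word belongs to $K$ exactly when its $\bu$-complement does, so that \Cref{eu-4tu2} disposes of each complementary pair at once.
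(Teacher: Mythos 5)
Your proof is correct and follows essentially the same route as the paper's: both invoke \Cref{aut} to get $\pi_\ba\in\Aut(K(C))$, use weight preservation to force $\pi_\ba(v)\in\{v,v+\bu\}$, identify the $\pi_\ba$-fixed candidates as $(\be_{2t},\bu_{2t})$ (and complement) and the $\pi_\ba$-antifixed candidates as $\bomega_{4t}$, $(\bomega_{2t},\bomega_{2t}\bu)$ (and complements), and then kill all of them with \Cref{eu-4tu2}. Your treatment is if anything slightly more explicit than the paper's, notably in spelling out that linearity of $K(C)$ together with $\bu\in K(C)$ lets one lemma application dispose of each complementary pair.
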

\begin{proof}
We suppose that $k=2$. Hence, there exists a vector $v \in K(C)\setminus \langle \bu \rangle$. From \Cref{aut}, as $\ba \in C$, we have that $\pi_\ba(v)\in K(C)$. If $\pi_\ba(v) \in \langle \bu \rangle$, then it contradicts $v \notin \langle \bu \rangle$. If $\pi_\ba(v) = v$, then $v=(\be_{2t},\bu_{2t})$, which contradicts \Cref{eu-4tu2}. If $\pi_\ba(v)=v\bu$ then $v \in \{ \bomega_{4t},(\bomega_{2t},\bomega_{2t}\bu)\}$, which contradicts \Cref{eu-4tu2}.
\end{proof}

\begin{lemma}\label{evenpi-4tu2}
Let $C$ be an $\HFP(4t_\bu,2)$-code. If $v\in K(C) \setminus \langle \bu \rangle$, and $j_v$ is the smallest value such that $\pi_\ba^{j_v}(v)=v$, then $j_v$ is even.
\end{lemma}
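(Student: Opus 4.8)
The plan is to argue by contradiction: assume $j_v$ is odd and use the induced shift structure to produce a cyclic Hadamard code of smaller length, which is impossible by \Cref{cyclic}. First I would record the arithmetic forced by oddness. Since $\ba^{2t}=\bu$ gives $\pi_\ba^{2t}=I$ (the permutations of \Cref{genpermutations}), the integer $j_v$ divides $2t$; an odd divisor of $2t$ already divides $t$, so $j_v\mid t$ and the cofactor $2t/j_v=2(t/j_v)$ is even. Writing $v=(v^{(1)},v^{(2)})$ with halves of length $2t$, the relation $\pi_\ba^{j_v}(v)=v$ means that cyclically shifting each half by $j_v$ positions fixes it; hence $v^{(1)}$ and $v^{(2)}$ are periodic of period dividing $j_v$, each being the concatenation of the \emph{even} number $2t/j_v$ of identical blocks of the odd length $j_v$. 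In particular $\wt(v^{(1)})$ and $\wt(v^{(2)})$ are both even.

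Next I would dispose of the degenerate case and set up the key object. If $j_v=1$ then both halves are constant, so $v\in\{\be,\bu,(\be_{2t},\bu_{2t}),(\bu_{2t},\be_{2t})\}$; since $v\notin\langle\bu\rangle$ and $\wt(v)=2t$, this forces $v\in\{(\be_{2t},\bu_{2t}),(\bu_{2t},\be_{2t})\}$, contradicting \Cref{eu-4tu2}. So I may assume $j_v\ge 3$. By \Cref{aut} the whole $\pi_\ba$-orbit $v,\pi_\ba(v),\dots,\pi_\ba^{j_v-1}(v)$ lies in $K(C)$, and its sum $w$ satisfies $\pi_\ba(w)=w$, so $w\in\{\be_{4t},\bu_{4t},(\be_{2t},\bu_{2t}),(\bu_{2t},\be_{2t})\}$; because $w\in K(C)$, \Cref{eu-4tu2} together with $\bu\in K(C)$ and linearity of the kernel rules out the two mixed vectors, leaving $w\in\{\be,\bu\}$. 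A parity observation will support the final step: when $j_v$ is odd the complement $\bar v=v+\bu$ is never in the $\pi_\ba$-orbit of $v$, since $\pi_\ba^m(v)=\bar v$ would give $\pi_\ba^{2m}(v)=v$, hence $j_v\mid 2m$ and, as $j_v$ is odd, $j_v\mid m$, whence $\bar v=\pi_\ba^m(v)=v$, which is impossible.

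Finally I would manufacture the contradiction. Since $v\in K(C)\setminus\langle\bu\rangle$, \Cref{parseval2} shows that the projection of $C$ onto $\Supp(v)$ consists of two copies of a Hadamard code of length $2t$. Using $j_v\mid t$ we have $\pi_\ba^{t}(v)=v$, so the cyclic shift $\pi_\ba$ descends to this projection, and, exactly as in the proofs of \Cref{b-4tu2} and \Cref{eu-4tu2}, the period-$j_v$ block structure of the two halves lets me read off from the shifts of $v$ a circulant Hadamard matrix, i.e.\ a cyclic $\HFP$-code, of length strictly smaller than $4t$; this contradicts \Cref{cyclic} (the small-length exceptions being excluded for $t$ large enough just as in \Cref{eu-4tu2}). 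I expect the genuine obstacle to be precisely this last reduction: one must verify that the periodicity of $v^{(1)},v^{(2)}$ combined with the projection of \Cref{parseval2} yields an \emph{honest} cyclic full propelinear Hadamard code rather than a merely quasi-cyclic or degenerate object, so that \Cref{cyclic} truly applies. Here the evenness of $2t/j_v$ and the fact that $\bar v$ avoids the orbit are what keep the reduced code balanced (Hadamard) and its associated permutation fixed-point free.
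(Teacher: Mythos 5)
Your first two paragraphs are fine (and parallel the paper's setup: $j_v\mid 2t$, the period-$j_v$ block structure, the exclusion of $j_v=1$ via \Cref{eu-4tu2}), but the proof fails at exactly the step you flag as "the genuine obstacle": the reduction is never constructed, and as sketched it cannot work. First, you conflate two different objects. A \emph{circulant Hadamard code} is an $\HFP$-code whose \emph{associated group} is cyclic, i.e.\ whose group is $C_{4t'}\times C_2$ (\Cref{rifacirculant}); such codes do not contradict \Cref{cyclic} at all — \Cref{cyclic} only forbids the code itself from being a cyclic group — and their nonexistence for length $>4$ is precisely Ryser's conjecture (\Cref{circHadConj}), which is open. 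So even if you did extract a circulant Hadamard matrix of smaller length, you would have proved nothing. Second, the object your orbit actually produces is not of the right shape to invoke \Cref{cyclic}: collapsing each $\pi_\ba^{m}(v)$ along its period gives only $j_v$ vectors of length $2j_v$, permuted by a product of \emph{two} $j_v$-cycles rather than by a single full-length cycle, so no cyclic group generated by one element with fixed-point-free full-cycle permutation emerges, and there are too few vectors ($2j_v$ with complements) to form a Hadamard code of length $2j_v$ (which needs $4j_v$ codewords). Your auxiliary observations (the orbit sum $w\in\{\be,\bu\}$, and $\bar v$ avoiding the orbit) are correct but never feed into a contradiction.

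The paper's own proof needs none of this machinery; it is a short parity count that you came within one step of finding in your first paragraph. Assume $j:=j_v$ is odd. Periodicity gives $\wt(v)=\frac{2t}{j}\bigl(\wt(v_1,\ldots,v_j)+\wt(v_{2t+1},\ldots,v_{2t+j})\bigr)=2t$, so the two fundamental blocks have weights summing to $j$, an odd number; hence one block weight is odd and the other even. Now apply the identical count to $v+\pi_\ba(v)$, which lies in $K(C)$ by linearity and is neither $\be$ nor $\bu$ (else $j=1$ or $j\mid 2$, impossible for odd $j\geq 3$), so it too has weight $2t$ and its fundamental blocks must also have weights summing to $j$. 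But each of its blocks is the vector of consecutive differences $(v_1+v_j,\,v_2+v_1,\ldots,v_j+v_{j-1})$ (and similarly in the second half), in which every coordinate of $v$ appears exactly twice, so each block has \emph{even} weight and the sum of the two block weights is even — contradicting that it equals the odd number $j$. This closes the argument with no appeal to projections, circulant matrices, or \Cref{cyclic}.
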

\begin{proof}
Let $v \in K(C)\setminus \langle \bu \rangle$.
As $\ba^i \in C$ for $i=1,\ldots,2t$, from \Cref{aut} we have that $\pi_\ba^i \in \Aut(K(C))$, so $\pi_\ba^i(v)\in K(C)$. Therefore, there is some $j$ such that $\pi_\ba^j(v)=v$. We note that $j\neq 1$. Indeed, $j=1$ implies that $\pi_\ba(v)=v$, so $v\in \{ \be,\bu,(\be_{2t},\bu_{2t}),(\bu_{2t},\be_{2t})\}$, which is not possible.
As $\pi_\ba^j(v)=v$, then $v=(v_1,\ldots,v_j,v_1,\ldots|| v_{2t+1},\ldots,v_{2t+j},v_{2t+1},\ldots)$.
Since $\wt(v)=2t$, we have $2t=(4t/2j) \wt(v_1,\ldots,v_j,v_{2t+1},\ldots,v_{2t+j})$ so $\wt(v_1,\ldots,v_j,v_{2t+1},\ldots,v_{2t+j})=j$.
Assume $j$ is an odd number, then without loss of generality we assume that the first $j$ coordinates of $v$ have an odd weight, and the first $j$ coordinates of the second half of $v$ have an even weight.
Note that $v+\pi_\ba(v)=(v_1+v_j,\ldots,v_j+v_{j-1},v_1+v_j,\ldots || v_{2t+1}+v_{2t+j},\ldots,v_{2t+j}+v_{2t+j-1},v_{2t+1}+v_{2t+j},\ldots)\in K(C)$. Moreover, since $\wt(v+\pi_\ba(v))=2t$, we have $2t=(4t/2j)\wt(v_1+v_j,\ldots,v_j+v_{j-1},v_{2t+1}+v_{2t+j},\ldots,v_{2t+j}+v_{2t+j-1})$, so $\wt(v_1+v_j,\ldots,v_j+v_{j-1},v_{2t+1}+v_{2t+j},\ldots,v_{2t+j}+v_{2t+j-1})=j$, but $\wt(v_1+v_j,\ldots,v_j+v_{j-1},v_{2t+1}+v_{2t+j},\ldots,v_{2t+j}+v_{2t+j-1})$ is even, then we have a contradiction and therefore $j$ cannot be odd.
\end{proof}

\begin{proposition}\label{tpower2-4tu2}
Let $C$ be an $\HFP(4t_\bu,2)$-code.
If $k>1$, then the length of $C$ is a power of two.
\end{proposition}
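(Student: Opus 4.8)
The plan is to reduce to the case that $t$ is even with $t\ge 4$ and $k=3$, and then to shorten the code by a single projection so as to reach a configuration that \Cref{kneq2} already forbids. First I would clear away the small cases: for $t=1$ and $t=2$ the length $4t\in\{4,8\}$ is a power of two and there is nothing to prove, while for $t=3$ one has $k=1$ (as observed in the proof of \Cref{b-4tu2}), so the hypothesis $k>1$ is vacuous. By \Cref{teven-4tu2} we may thus assume $t$ is even and $t\ge 4$. In this range the theorem bounding the dimension of the kernel above by $3$ gives $k\le 3$, and \Cref{kneq2} excludes $k=2$; hence $k>1$ forces $k=3$.

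Next I would pin down how $\pi_\ba$ acts on the kernel. Since $\ba^m\in C$ for every $m$, \Cref{aut} shows that $K(C)$ is invariant under $\pi_\ba$. For $v\in K(C)\setminus\langle\bu\rangle$ its minimal $\pi_\ba$-period $j_v$ is even by \Cref{evenpi-4tu2}; moreover the weight-$2t$ vectors of $\pi_\ba$-period at most $2$ are precisely $(\be_{2t},\bu_{2t})$, $\bomega_{4t}$ and $(\bomega_{2t},\bomega_{2t}\bu)$ together with their complements, all of which are excluded from $K(C)$ by \Cref{eu-4tu2} (using that $\bu\in K(C)$ and $K(C)$ is linear). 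Hence $j_v\ge 4$ for every such $v$. In fact, since $\pi_\ba$ acts on the three-dimensional space $K(C)$ with no vector of odd period (again \Cref{evenpi-4tu2}), its order on $K(C)$ is a power of two, so $j_v=4$, and the blocks of length $2^{k-1}=4$ of the kernel structure reappear here. I would stress, however, that this period information constrains only the $2$-part of $t$; the odd part must be killed by a genuinely different mechanism, namely by lowering the length.

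That mechanism is \Cref{parseval2}. Choosing $v\in K(C)\setminus\langle\bu\rangle$, the projection $C_v$ onto $\Supp(v)$ consists of two copies of a Hadamard code $D$ of length $2t=4t'$, where $t'=t/2$, with $\ker(D)=k-1=2$; as $\ker(D)>1$, $D$ is nonlinear. The crucial point is to verify that $D$ is once more an $\HFP(4t'_\bu,2)$-code, by following the permutations $\pi_\ba$ and $\pi_\bb$ of \Cref{genpermutations} on the $\pi_\ba$-periodic support of $v$ and reading off the induced generators together with their orders. Granting this, \Cref{kneq2} applied to $D$ forces $t'<4$, i.e.\ $t\le 6$; the value $t=6$ is then ruled out because it gives $t'=3$, for which such a code has $k=1\ne 2$ (equivalently, \Cref{odd} forbids $\ker(D)=2$ at a length of the form $2^2\cdot(\text{odd})$). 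This leaves $t=4$, and together with the base case $t=2$ the length $4t$ is a power of two.

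The step I expect to be the main obstacle is exactly this inheritance claim: a coordinate projection need not respect a propelinear structure, so one must argue carefully that the support of the chosen kernel vector is compatible with the cyclic action of $\pi_\ba$ (this is where the period-$4$ behaviour and the explicit shape of $\bb$ in \Cref{genpermutations} enter) and, beyond that, that the shorter code is of the precise type $\HFP(4t'_\bu,2)$ rather than one of the other three families of \Cref{HFP2t2}. A more self-contained variant would iterate \Cref{parseval2} down to a kernel of dimension one and exhibit a cyclic (circulant) Hadamard code, contradicting \Cref{cyclic}; but then the burden shifts to controlling the length and the linearity of the terminal code, which is a comparable difficulty.
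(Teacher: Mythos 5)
Your reduction to $t$ even, $t\ge 4$, $k=3$ is sound, but the proof then hinges entirely on the claim you yourself flag as ``the main obstacle'': that the projection $D$ of $C$ onto $\Supp(v)$, for $v\in K(C)\setminus\langle\bu\rangle$, is again a code of type $\HFP(4t'_\bu,2)$, so that \Cref{kneq2} can be applied to it. This claim is never proved, and nothing in the paper supplies it: \Cref{parseval2} only yields a Hadamard code with kernel dimension $k-1$; it gives no propelinear structure at all, let alone one of the specific type $\HFP(4t'_\bu,2)$ rather than one of the other families of \Cref{HFP2t2}. Worse, the natural candidate structure does not even act on the projected coordinates: $\pi_\ba(\Supp(v))=\Supp(\pi_\ba(v))\neq\Supp(v)$ precisely because $v$ is not $\pi_\ba$-invariant (its minimal period is at least $4$), so the generators of $C$ do not restrict to permutations of $\Supp(v)$. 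The places where the paper does carry group structure through a projection (\Cref{eu-4tu2}, \Cref{1010circulant}, \Cref{eu-kernel-2t4u}) all concern very special kernel vectors such as $(\be_{2t},\bu_{2t})$ or $\bomega$-type vectors, where explicit commutation identities pin down the shape of every codeword and exhibit a concrete subgroup whose projection is cyclic or HFP; no such identity is available for an arbitrary $v$ of period $4$. Without the inheritance claim, \Cref{kneq2} cannot be invoked and the argument collapses. A secondary flaw: your justification of $j_v=4$ (``no vector of odd period, hence the order of $\pi_\ba$ on $K(C)$ is a power of two'') is a non sequitur, since all periods being even does not preclude order $6$, say; for $k=3$ it can be repaired only by invoking the fact that element orders in $GL(3,2)$ are $1,2,3,4,7$.

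The paper proves the statement by an entirely different mechanism that never leaves the kernel and never needs the length to drop. Starting from any $v\in K(C)\setminus\langle\bu\rangle$, it folds: $v_1=v+\pi_\bb(v)$ has two equal halves, $v_2=v_1+\pi_{\ba^{t}}(v_1)$ has four equal quarters, and iterating with $\pi_{\ba^{t/2}},\ldots,\pi_{\ba^{t/2^s}}$ produces a kernel vector consisting of $2^s$ identical blocks of odd length $t'$ (writing $4t=2^st'$). Such a vector is fixed by $\pi_\ba^{t'}$, which contradicts \Cref{evenpi-4tu2} unless $t'=1$; the degenerate outputs of the folding ($\be$, $\bu$, and alternating $\be_l/\bu_l$ patterns) are exactly what \Cref{eu-4tu2} and \Cref{evenpi-4tu2} were designed to exclude. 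If you wish to salvage your projection route, you must prove the inheritance statement yourself, which looks at least as hard as the proposition; the folding argument, by contrast, uses only lemmas already established.
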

\begin{proof}
Let $4t$ be equal to $2^st'$, where $t'$ is odd. For any binary vector $x$ of even length we say that $x^{(1)},x^{(2)}$ are the projections over the first and the second half part of $x$, respectively.
Let $v$ be a vector in the kernel of $C$ different from $\be$ and $\bu$. From \Cref{aut}, as $\bb\in C$ we have that $\pi_\bb \in \Aut(K(C))$, so $\pi_\bb(v)\in K(C)$, and $v+\pi_\bb(v)\in K(C)$ since the kernel is a subspace.
We denote $v+\pi_\bb(v)$ by $v_1$. 
Since $v+\pi_\bb(v)=(v^{(1)}+v^{(2)},v^{(1)}+v^{(2)})$, we have $v_1^{(1)}=v_1^{(2)}$.
As $\ba^t \in C$, we have that $v_1+\pi_{\ba^t}(v_1)\in K(C)$.
We denote $v_1+\pi_{\ba^t}(v_1)$ by $v_2$.
Note that $v_1^{(1)(1)}=v_1^{(1)(2)}$, since $v_1+\pi_{\ba^t}(v_1)=(v^{(1)(1)}+v^{(1)(2)},v^{(1)(1)}+v^{(1)(2)},v^{(1)(1)}+v^{(1)(2)},v^{(1)(1)}+v^{(1)(2)})$.
If we repeat the same construction for $v_2$ using $\pi_{\ba^{t/2}}$, we obtain $v_3=(\alpha_3,\ldots,\alpha_3)\in K(C)$, where the length of $\alpha_3$ is $t/2$.
If we repeat this process until we can apply $\pi_{\ba^{t/2^s}}$, we obtain a vector $w$ in the kernel which can be divided in $2^s$ parts of length $t'$ which are exactly equal.
Thus, we have that $\pi_\ba^{t'}(w)=w$, and from \Cref{evenpi-4tu2}, it is only possible if $t'=1$ and $w\in \{\be,\bu\}$.

In each iteration of the construction of $v_i's$ it could happen two exceptions.
We could get a vector $v_j$ which is compounded by a combination of $\be_l's$ and $\bu_l's$. As $v_j+\pi_\ba(v_j)\in K(C)$ we have that $\wt(v_j+\pi_\ba(v_j)) \in \{0,2t,4t\}$, which is not possible by the shape of $v_j$, unless $v_j=(\be_{l},\bu_{l},\ldots,\be_{l},\bu_{l})$ with $l\in \{1,2,2t\}$ or $v_j=(\be_{l},\bu_{l},\bu_{l},\be_{l},\ldots,\bu_{l},\be_{l})$ where only appear two consecutive equal parts with $l=1$. Firstly, we suppose that $v_j=(\be_{l},\bu_{l},\ldots,\be_{l},\bu_{l})$ with $l\in \{1,2,2t\}$.
If $l=1$, then $\bomega_{4t}\in K(C)$ which contradicts \Cref{eu-4tu2}.
If $l=2t$, then $v_j=(\be_{2t},\bu_{2t})$, which contradicts \Cref{eu-4tu2}.
If $l=2$, then $v_j+\pi_\ba(v_j)=\bomega_{4t} \in K(C)$, which contradicts \Cref{eu-4tu2}.
Now, we suppose that $v_j=(\be_{l},\bu_{l},\bu_{l},\be_{l},\ldots,\bu_{l},\be_{l})$ where only appear two consecutive equal parts with $l=1$.
It is clear that there exists a permutation $\pi_{\ba^i}$ for some $i$ such that $\pi_{\ba^i}(v_j)=(\be_{l},\be_{l},\bu_{l},\bu_{l},\ldots,\be_{l},\be_{l},\bu_{l},\bu_{l})$, which is the same case as before where $l=2$.

The other exception is that we could obtain a vector $v_j\in \{\be,\bu\}$ for some $j$.
This means that $v_{j-1}=(\gamma,\ldots,\gamma)$ or $v_{j-1}=(\delta,\bar{\delta},\ldots,\delta,\bar{\delta})$ with $\gamma,\delta$ of length $i$ and $v_j=v_{j-1}+\pi_{\ba^i}(v_{j-1})\in \{\be,\bu\}$. We note that $i$ is even, on the contrary we have that  if $v_{j-1}=(\gamma,\ldots,\gamma)$ then $\pi_{\ba}^i(v_{j-1})=v_{j-1}$ which is not possible since \Cref{evenpi-4tu2}, and if $v_{j-1}=(\delta,\bar{\delta},\ldots,\delta,\bar{\delta})$ then $v_{j-1}+\pi_\ba(v_j)$ has a weight which is not in $\{0,2t,4t\}$. Thus $i$ is even. In order to try to solve this case, we have to take $v_j=v_{j-1}+\pi_{\ba^{i/2}}(v_{j-1})$. Thus, we obtain $v_j=(\gamma^{(1)}+\gamma^{(2)},\ldots,\gamma^{(1)}+\gamma^{(2)})$ or $v_j=(\delta^{(1)}+\delta^{(2)},\ldots,\delta^{(1)}+\delta^{(2)})$. If we obtain $v_j\in \{\be,\bu\}$ again, then we have to take $v_j=v_{j-1}+\pi_{\ba^{i/4}}(v_{j-1})$ and we can repeat this until we get $v_j\notin \{\be,\bu\}$ unless $v_{j-1} \in \{ \be,\bu,(\be_l,\bu_l,\ldots,\be_l,\bu_l)\}$. But, if $v_{j-1} \in \{\be,\bu\}$ then we can repeat an analogous argumentation than for $v_j$ and we obtain $v\in \{\be,\bu\}$ which contradicts the hypotheses. If $v_{j-1}=(\be_l,\bu_l,\ldots,\be_l,\bu_l)$, then we have the first exception that we have treated in the previous paragraph.

\end{proof}

\begin{theorem}\label{k1-4tu2}
Let $C$ be an $\HFP(4t_\bu,2)$-code. Then $k = 1$ for $t \geq 4$.
\end{theorem}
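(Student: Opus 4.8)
The plan is to pin down $k$ by squeezing it between the bounds already available in this section and then eliminating the intermediate values. I would first record the two cheap facts: $\bu\in K(C)$ with $\bu\neq\be$ gives $k\geq 1$, and the kernel-dimension theorem proved earlier in this section gives $k\leq 3$ — but that upper bound was established only for \emph{nonlinear} codes, so before using it I must verify nonlinearity. This is exactly what \Cref{b-4tu2} provides: for $t\geq 4$ the generator $\bb$ never lies in $K(C)$, whereas $\bb\in C$ always holds; hence $K(C)\neq C$ and $C$ cannot be linear. With nonlinearity secured I may freely invoke $1\leq k\leq 3$.

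It then remains to discard $k=2$ and $k=3$. The value $k=2$ is already ruled out by \Cref{kneq2} (for $t\geq 4$), so the crux is $k=3$, which I would kill by a rank count. If $k=3>1$, then \Cref{tpower2-4tu2} forces the length $4t=2^{s}$ to be a power of two, so $t=2^{s-2}$, and the hypothesis $t\geq 4$ gives $s\geq 4$. Since $C$ is nonlinear and $t$ is a power of two, \Cref{rank2t} yields $r=2t=2^{s-1}$. On the other hand, applying \Cref{brk} with $t'=1$ and $k=3$ gives $r\leq 2^{s+1}/2^{3}+3-1=2^{s-2}+2$. Combining the two, $2^{s-1}\leq 2^{s-2}+2$, i.e. $2^{s-2}\leq 2$, i.e. $s\leq 3$, which contradicts $s\geq 4$. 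Hence $k\neq 3$, and together with $1\leq k\leq 3$ and $k\neq 2$ this forces $k=1$.

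I do not expect a serious obstacle beyond applying each external bound in precisely its intended regime, which is where the ordering of the steps matters: the bound $k\leq 3$ and the equality $r=2t$ both require nonlinearity, which is why \Cref{b-4tu2} must come first, and the reduction to a power-of-two length via \Cref{tpower2-4tu2} is what simultaneously makes \Cref{rank2t} applicable and collapses \Cref{brk} to its clean form with $t'=1$. The decisive arithmetic is simply $2^{s-2}\leq 2\Rightarrow s\leq 3$, i.e. $4t\leq 8$, which is incompatible with $t\geq 4$; this single inequality is what ultimately rules out every kernel dimension above $1$ once $k=2$ has been removed separately.
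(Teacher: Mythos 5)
Your proof is correct and takes essentially the same route as the paper's: \Cref{kneq2} removes $k=2$, and the combination of \Cref{tpower2-4tu2}, \Cref{rank2t} and \Cref{brk} yields the arithmetic contradiction that removes larger kernels. The only (harmless, indeed clarifying) deviations are that you invoke the $k\leq 3$ theorem to reduce to the single case $k=3$, whereas the paper lets \Cref{brk} contradict every $k\geq 3$ at once, and that you explicitly secure nonlinearity via \Cref{b-4tu2} before using \Cref{rank2t} and the kernel bound --- a hypothesis the paper's proof leaves implicit.
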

\begin{proof}
We suppose that $k > 1$. From \Cref{kneq2}, we have that $k\geq 3$. From \Cref{tpower2-4tu2} and \Cref{rank2t}, we have that $t$ is a power of two and $r=2t$, which contradicts \Cref{brk} for any $t \geq 3$.
\end{proof}

\subsection{\boldmath{$\HFP(2t,2,2_\bu)$}-codes} \label{2t22u}
In this subsection we assume that $C= \langle \ba, \bb, \bu : \ba^{2t}=\bb^2=\bu^2=\be \rangle$.
\begin{proposition}\label{tevensquare}
Let $C$ be an $\HFP(2t,2,2_\bu)$-code of length $4t$ with $t > 1$. Then $t$ is an even square number.
\end{proposition}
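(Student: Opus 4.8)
The plan is to prove the two assertions separately: that $t$ is a perfect square, and that $t$ is even. The squareness is immediate from the direct-product structure, while the evenness I would obtain by the same kind of rank computation used in \Cref{teven-4tu2}.

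For the square part, note that as a group $C=\langle \ba,\bb,\bu\rangle\cong (C_{2t}\times C_2)\times\langle\bu\rangle$. Writing $G=\langle \ba,\bb\rangle\cong C_{2t}\times C_2$, we have $C=G\times\langle\bu\rangle$ with $G$ normal of index $2$ and $\bu$ the central involution. Since an $\HFP$-code is an Hadamard group of order $8t$, \Cref{ordersquare} applies with $N=G$ and gives that $|G|=4t$ is a perfect square; hence $t$ is a perfect square. This step does not see the parity of $t$.

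For the even part I would argue by contradiction. Assume $t$ is odd, so $4t=2^2t'$ with $t'=t$ odd; then \Cref{aske}(ii) (equivalently \Cref{rankkernel}) forces the rank to be exactly $r=4t-1$. Form the matrix $H$ whose rows are the codewords $\ba^i$ and $\bb\ba^i$ for $0\le i<2t$, a transversal of $\langle\bu\rangle$ in $C$, using the permutations of \Cref{genpermutations}. Recall that $r\le\rank(H)+1$, with equality dropped to $r=\rank(H)$ when $\bu$ lies in the row span. Set $w:=\sum_{i=0}^{2t-1}\ba^i$. Using $\pi_\ba(\ba^i)=\ba^{i+1}+\ba$ (from \Cref{xi}), together with $2t$ even and $\ba^{2t}=\be$, one computes $\pi_\ba(w)=\sum_{i=1}^{2t}\ba^i=w$; hence $w$ is fixed by $\pi_\ba=(1,\ldots,2t)(2t+1,\ldots,4t)$ and therefore $w\in\{\be,\bu,(\be_{2t},\bu_{2t}),(\bu_{2t},\be_{2t})\}$. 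The same computation gives that the second half of the rows sums to $\pi_\bb(w)$.

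The core is then a case analysis on $w$, clashing in each case with $r=4t-1$. If $w=\be$, both halves sum to $\be$, which together with the zero row $\ba^0=\be$ produces three independent dependencies, so $\rank(H)\le 4t-3$ and $r\le 4t-2$. If $w=\bu$, then $\pi_\bb(w)=\bu$ as well, so $\bu$ lies in the row span (hence $r=\rank(H)$), and the full row sum is $w+\pi_\bb(w)=\be$, giving one dependency among the $4t-1$ nonzero rows and $r\le 4t-2$. If $w\in\{(\be_{2t},\bu_{2t}),(\bu_{2t},\be_{2t})\}$, then $w+\pi_\bb(w)=\bu_{4t}$, so every column of $H$ would have odd weight, contradicting that $H$ is (equivalent to) an Hadamard matrix for $t>1$. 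In all cases we contradict $r=4t-1$, so $t$ must be even; combined with the square part, $t$ is an even perfect square.

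The hard part will be the bookkeeping in this case analysis, namely counting independent row dependencies carefully enough to improve the automatic bound $r\le 4t-1$ to $r\le 4t-2$, and tracking whether $\bu$ already lies in the row span so as to handle the $+1$ correctly. The identity $\pi_\ba(w)=w$ and the odd-column-weight contradiction are the clean pivots; the remainder is the same kind of accounting as in \Cref{teven-4tu2}.
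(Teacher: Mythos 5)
Your proposal is correct and follows the paper's own proof essentially step for step: the squareness comes, exactly as in the paper, from applying \Cref{ordersquare} to the decomposition $C=(C_{2t}\times C_2)\times\langle\bu\rangle$, and the evenness comes from rerunning the rank argument of \Cref{teven-4tu2}, which is precisely what the paper does when it remarks that that proof depends only on the permutations $\pi_\ba,\pi_\bb$ of \Cref{genpermutations}. The only difference is presentational: you spell out the adapted case analysis on $w$ (with the same dependency counting and the same odd-column-weight contradiction), whereas the paper invokes it by citation.
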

\begin{proof}
We have that $C \simeq C_{2t} \times C_2 \times \langle \bu \rangle = N \times \langle \bu \rangle$, where $N=C_{2t} \times C_2$. From \Cref{ordersquare} we have that $|C_{2t} \times C_2|$ is a square, then $t$ is a square.
The proof of \Cref{teven-4tu2} only depends on the associated permutations to the generators of the code, so we can use the same proof to get that $t$ is even.
\end{proof}

\begin{proposition} \label{boundtpow2}
Let $C$ be an $\HFP(2t,2,2_\bu)$-code of length $4t$. If $t$ is a power of two, then $t \in \{1,4\}$.
\end{proposition}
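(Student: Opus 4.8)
The plan is to exploit the direct–product structure of the code together with the classical exponent bound for abelian Hadamard difference sets (\Cref{exponentbound}), which is made applicable by \Cref{HDS}. Writing $C=\langle \ba,\bb,\bu\rangle\simeq C_{2t}\times C_2\times C_2$, I set $G=\langle \ba,\bb\rangle\simeq C_{2t}\times C_2$, so that $C=G\times\langle\bu\rangle$ with $G$ abelian and $\langle\ba,\bb\rangle\cap\langle\bu\rangle=\{\be\}$. By \Cref{HDS} this decomposition produces an Hadamard difference set in $G$, which is exactly the situation governed by \Cref{exponentbound}. From there the whole argument reduces to a comparison of two powers of two.

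First I would dispose of the trivial case $t=1$, which already lies in $\{1,4\}$, and then assume $t=2^m>1$. The decisive preliminary input is \Cref{tevensquare}: since $t>1$, the parameter $t$ is an even square, so $m$ is even and $m\geq 2$. Writing $m=2s$ with $s\geq 1$, the order of $G$ becomes $|G|=|C_{2t}\times C_2|=2^{m+1}\cdot 2=2^{m+2}=2^{2s+2}$, which is precisely the form $2^{2s+2}$ required by \Cref{exponentbound}. Applying that result to the abelian group $G$ gives $\exp(G)\leq 2^{s+2}$. On the other hand $\exp(G)=\exp(C_{2^{m+1}}\times C_2)=2^{m+1}=2^{2s+1}$. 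Combining these yields $2^{2s+1}\leq 2^{s+2}$, hence $2s+1\leq s+2$, that is $s\leq 1$. Since $s\geq 1$, this forces $s=1$, so $m=2$ and $t=2^m=4$; together with the trivial case we obtain $t\in\{1,4\}$.

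The numerical step is routine, so the main point requiring care is the bookkeeping of hypotheses rather than any hard estimate. I must check that $G=\langle\ba,\bb\rangle$ is genuinely a direct factor of $C$ (so that \Cref{HDS} applies with this particular $G$) and that it is abelian (so that \Cref{exponentbound} applies, since that result is stated only for abelian groups). The subtlest piece is the role of \Cref{tevensquare}: it is exactly the square condition there that guarantees $m$ is even, and hence that $|G|=2^{m+2}$ is an \emph{even} power of two of the shape $2^{2s+2}$. Without this, $|G|$ need not satisfy the hypothesis of \Cref{exponentbound} and the exponent inequality could not even be formulated. Once these three facts are in place, the exponent comparison closes the argument immediately.
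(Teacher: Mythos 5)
Your proposal is correct and follows exactly the paper's own argument: \Cref{HDS} yields an Hadamard difference set in $G=C_{2t}\times C_2$, \Cref{tevensquare} forces $t=2^{2s}$, and comparing $\exp(G)=2^{2s+1}$ with the bound $2^{s+2}$ from \Cref{exponentbound} gives $s\leq 1$, hence $t\in\{1,4\}$. The only difference is presentational: you separate the case $t=1$ and verify the hypotheses (abelian, direct factor, even power of two) explicitly, which the paper leaves implicit.
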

\begin{proof}
From \Cref{HDS}, as $C= C_{2t} \times C_2 \times \langle \bu \rangle$, we have an Hadamard difference set in $C_{2t}\times C_2$. From \Cref{tevensquare}, $t$ is also square, so $t$ is $2^{2s}$ for some $s$. Thus, the order and the exponent of $C_{2t}\times C_2$ are $2^{2s+2}$ and $2^{2s+1}$, respectively. From \Cref{exponentbound}, it derives that $s \leq 1$. 
\end{proof}

The next lemmas will be helpful in some proofs.
\begin{lemma}\label{eukernel}
Let $C$ be an $\HFP(2t,2,2_\bu)$-code. Then the vector $(\be_{2t},\bu_{2t})$ is not in the kernel of $C$. 
\end{lemma}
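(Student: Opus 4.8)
The plan is to assume $v=(\be_{2t},\bu_{2t})\in K(C)$ and to produce a forbidden cyclic Hadamard propelinear code of length $t$. Since $K(C)\subseteq C$, I would first pin down $v$ as a group element. Writing $v=\ba^i\bb^j\bu^k$, its permutation is $\pi_v=\pi_\ba^i\pi_\bb^j$, and a direct check with the permutations of \Cref{genpermutations} gives $\pi_\ba(v)=v$ and $\pi_\bb(v)=v+\bu$. Hence by \Cref{xi} the square $v^2=v+\pi_v(v)$ equals $\be$ when $j$ is even and $\bu$ when $j$ is odd. On the other hand $C\simeq C_{2t}\times C_2\times C_2$ is abelian, so $v^2=\ba^{2i}$. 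As $\bu\notin\langle\ba\rangle$, the odd case (which would force $\ba^{2i}=\bu$) is impossible; thus $j=0$ and $\ba^{2i}=\be$, so $i\in\{0,t\}$. Since $v\notin\langle\bu\rangle$, this yields $v\in\{\ba^t,\ba^t\bu\}$. I would treat $v=\ba^t$, so $\ba^t=(\be_{2t},\bu_{2t})$; the case $v=\ba^t\bu$, where $\ba^t=(\bu_{2t},\be_{2t})$, is entirely symmetric after replacing $v$ by $v+\bu\in K(C)$ and projecting onto the first half.

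Next I would extract the structural consequences of $\ba^t=v$. Commutativity of $C$, together with $\pi_{\ba^i}(v)=v$ and $\pi_v=\pi_\ba^t$, gives $\ba^i=\pi_\ba^t(\ba^i)$ for every $i$; since $\pi_\ba^t$ swaps $j\leftrightarrow j+t$ inside each half, every power is periodic of period $t$ in each half, i.e. $\ba^i=(c_i,c_i,d_i,d_i)$ with $c_i,d_i\in\F^t$. Moreover $\ba^{i+t}=\ba^i*v=\ba^i+v$, so $c_{i+t}=c_i$ and $d_{i+t}=\overline{d_i}$. The idea is now to \emph{fold} the second half. By \Cref{parseval2} the projection $C_v$ of $C$ onto $\Supp(v)$ (the second half) is a Hadamard code of length $2t$, and the $2t$ words $(d_i,d_i)$ obtained by projecting the $\ba^i$ lie in $C_v$, are pairwise distinct, and satisfy $(d_{i+t},d_{i+t})=(d_i,d_i)+\bu_{2t}$.

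Finally I would pass to the reduced words $d_i\in\F^t$. Because two distinct codewords of the Hadamard code $C_v$ are at distance $t$ or $2t$ (the latter only for a complementary pair) and $(d_i,d_i)+(d_j,d_j)$ has weight $2\,d(d_i,d_j)$, it follows that $d(d_i,d_j)=t/2$ unless $d_j=\overline{d_i}$, in which case it is $t$. Hence $\{d_i:0\le i<2t\}$ is a Hadamard code of length $t$ with $2t$ codewords. The recursion $d_{i+1}=d_1+\sigma_t(d_i)$ inherited from $\pi_\ba$, whose folded action $\sigma_t$ is a single $t$-cycle, endows this code with the structure of the cyclic group $\langle d_1\rangle\simeq C_{2t}$ acting propelinearly, i.e. a cyclic Hadamard propelinear code of length $t$ and order $2t$. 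Since $t$ is an even square by \Cref{tevensquare}, we have $4\mid t$ and $t\ge 4$, so this contradicts \Cref{cyclic}. The main obstacle is precisely this folding step: one must argue carefully that the period-$t$ structure lets the code descend to length $t$ while simultaneously preserving the Hadamard distances (supplied by applying \Cref{parseval2} to $C_v$) and the single-cycle permutation, so that the resulting length-$t$ object is genuinely a cyclic Hadamard propelinear code and not merely a constant-weight code.
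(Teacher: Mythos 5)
Your proposal takes a genuinely different route from the paper's proof, and apart from one unproven assertion it is sound. The paper makes the same opening identification $v=\ba^t$ (you are in fact more careful here: the paper silently skips the coset possibility $v=\ba^t\bu$, i.e.\ $\ba^t=(\bu_{2t},\be_{2t})$, which you dispatch by symmetry) and the same use of commutativity, but then it stays entirely local to the generator: writing $\ba=(\alpha_0,\alpha_0,\alpha_2,\alpha_2)$, it deduces from \Cref{xi} that the coordinates of $\alpha_2$ sum to $1$ modulo $2$, so $\wt(\alpha_2)$ is odd, while \Cref{parseval2} forces $\wt(\alpha_2)=t/2$, which is even because $t$ is an even square by \Cref{tevensquare}. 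That parity clash ends the proof in a few lines. You instead fold all the powers $\ba^i=(c_i,c_i,d_i,d_i)$ into a length-$t$ code $D=\{d_i\}$ carrying a cyclic propelinear structure and contradict \Cref{cyclic}; this is heavier machinery, but it is exactly the technique the paper itself uses in \Cref{b-4tu2}, \Cref{eu-4tu2} and \Cref{1010circulant}, and your verification of the folded distances and of the recursion $d_{i+1}=d_1+\sigma_t(d_i)$ is correct. Note that both arguments need the full strength of \Cref{tevensquare}: the paper to make $t/2$ even, you to get $4\mid t$ so that \Cref{cyclic} applies to length $t$.

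The gap is this: you assert, but do not prove, that the $2t$ words $(d_i,d_i)$, $0\le i<2t$, are pairwise distinct, and \Cref{parseval2} does not supply that. It is essential: if $d_i=d_j$ for some $i\ne j$, then $\langle d_1\rangle$ has order a proper divisor of $2t$, the set $D$ has too few codewords to be an Hadamard code of length $t$, and \Cref{cyclic} gives no contradiction. Fortunately the fix is short and reuses the group-theoretic fact $\bu\notin\langle\ba\rangle$ from your first step. Suppose $d_i=d_j$ with $0\le i<j<2t$. Then $\ba^i+\ba^j=(c_i+c_j,c_i+c_j,\be_t,\be_t)$. Distinct codewords of the Hadamard code $C$ are at distance $2t$ or $4t$; distance $4t$ would give $d_j=\bar{d_i}\ne d_i$, so the distance is $2t$, forcing $c_i+c_j=\bu_t$ and hence $\ba^j=\ba^i+(\bu_{2t},\be_{2t})=\ba^i+v+\bu$. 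But $\ba^i+v=\ba^i+\pi_\ba^i(v)=\ba^i*\ba^t=\ba^{i+t}$, and adding $\bu$ coincides with multiplying by $\bu$, so $\ba^j=\ba^{i+t}\bu$ as group elements, i.e.\ $\ba^{j-i-t}=\bu$, contradicting $\bu\notin\langle\ba\rangle$. With this paragraph inserted, your proof of \Cref{eukernel} is complete.
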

\begin{proof}
Firstly, we note that if $(\be_{2t},\bu_{2t})\in C$ then $(\be_{2t},\bu_{2t})=\ba^t$. Indeed, if $(\be_{2t},\bu_{2t}) =\ba^i\bb$ for some $i = 1,\ldots,2t$, then $(\be_{2t},\bu_{2t})^2=\bu$, which is not possible. If $(\be_{2t},\bu_{2t})=\ba^i$ for some $i=1,\ldots,2t$ then $(\be_{2t},\bu_{2t})^2=\be$ and so $i=t$.
Now we assume that $\ba^t=(\be_{2t},\bu_{2t})\in K(C)$.
Let $\ba = (\alpha_0,\alpha_1,\alpha_2,\alpha_3)$, where $\alpha_i=(a_{it+1},\ldots,a_{it+t})$.
Since $\ba \ba^t = \ba^t \ba$, we have that $\alpha_0=\alpha_1$ and $\alpha_2=\alpha_3$.
From \Cref{xi}, we have that $\ba^t=\ba+\pi_\ba(\ba)+\ldots+\pi_\ba^{t-1}(\ba)=(\be_{2t},\bu_{2t})$, so $\sum_{i=1}^{t}a_i=0$ and $\sum_{i=2t+1}^{3t}a_i=1$.
From \Cref{parseval2}, the projection of $C$ over $\Supp(\ba^t)$ have an Hadamard structure, then $\wt(\alpha_i)=t/2$, where $i=0,2$.
As $\sum_{i=2t+1}^{3t}a_i=1$, we have that $\wt(\alpha_2)$ is odd, which is not possible since $t$ is a square, and so $(\be_{2t},\bu_{2t})\notin K(C)$.
\end{proof}


\begin{lemma}\label{evenpi}
Let $C$ be an $\HFP(2t,2,2_\bu)$-code. If $v\in K(C) \setminus \langle \bu \rangle$ and $j_v$ is the smallest value such that $\pi_\ba^{j_v}(v)=v$, then $j_v$ is even.
\end{lemma}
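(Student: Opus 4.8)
The plan is to mirror the proof of \Cref{evenpi-4tu2}, since the argument there uses only features shared by all three families covered by \Cref{genpermutations}: the shape $\pi_\ba=(1,2,\ldots,2t)(2t+1,\ldots,4t)$ of the permutation attached to $\ba$, the fact that $K(C)$ is a binary linear space, and that $\pi_\ba^{i}\in\Aut(K(C))$ for every $i$ (by \Cref{aut}, because $\ba^{i}\in C$). The only ingredient that must be replaced by its $\HFP(2t,2,2_\bu)$ analogue is the exclusion of the vector $(\be_{2t},\bu_{2t})$ from the kernel, which here is supplied by \Cref{eukernel} instead of \Cref{eu-4tu2}.

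First I would record the basic facts. Since $v\in K(C)\subseteq C$ and $v\notin\{\be,\bu\}$, the vector $v$ is a codeword of the Hadamard code of weight exactly $2t$. Because $\pi_\ba$ has order $2t$ we have $\pi_\ba^{2t}(v)=v$, so $j_v$ is well defined; moreover $\{j:\pi_\ba^{j}(v)=v\}$ is a subgroup of $\Z_{2t}$, whence $j_v\mid 2t$. I would next rule out $j_v=1$: if $\pi_\ba(v)=v$, then $v$ is constant on each of the two $2t$-cycles, forcing $v\in\{\be,\bu,(\be_{2t},\bu_{2t}),(\bu_{2t},\be_{2t})\}$. The first two are excluded by $v\notin\langle\bu\rangle$, and since $K(C)$ is a subspace containing $\bu$, membership of $(\bu_{2t},\be_{2t})$ is equivalent to membership of $(\be_{2t},\bu_{2t})$, which \Cref{eukernel} forbids.

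The heart of the argument is a parity computation. Assume, for contradiction, that $j=j_v$ is odd. As $\pi_\ba^{j}(v)=v$ with $j\mid 2t$, the vector $v$ is $j$-periodic on each cycle, so $\wt(v)=(2t/j)(w_1+w_2)$, where $w_1,w_2$ are the weights of the length-$j$ blocks on the first and second cycles. From $\wt(v)=2t$ we get $w_1+w_2=j$, which is odd; hence exactly one of $w_1,w_2$ is odd, say $w_1$. Now consider $v+\pi_\ba(v)\in K(C)$. It is again $j$-periodic, and on each cycle its length-$j$ block is the cyclic difference with entries $v_i+v_{i-1}$, whose coordinate sum telescopes to $2\sum_i v_i\equiv 0 \pmod 2$; thus both blocks of $v+\pi_\ba(v)$ have even weight. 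If $\wt(v+\pi_\ba(v))=2t$, then its combined block weight equals $j$, which is odd, contradicting the evenness just established. The remaining possibilities for a kernel codeword, $v+\pi_\ba(v)\in\{\be,\bu\}$, give $\pi_\ba(v)=v$ or $\pi_\ba^{2}(v)=v$, i.e. $j_v\in\{1,2\}$, again contradicting that $j$ is odd and greater than $1$. Hence $j_v$ must be even.

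I expect no genuine obstacle beyond bookkeeping: the entire substantive difficulty is absorbed into \Cref{eukernel}, on which the $j_v=1$ case rests, and the parity step is routine once one observes that the cyclic difference of any binary sequence has even weight on a full period. The main care point is simply to confirm that $v+\pi_\ba(v)$ cannot slip into the degenerate cases $\be$ or $\bu$ without already forcing $j_v\in\{1,2\}$.
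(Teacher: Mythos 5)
Your proof is correct and takes exactly the paper's route: the paper's own proof of this lemma is a one-line reference to the proof of \Cref{evenpi-4tu2} (noting the generators have the same associated permutations), with \Cref{eukernel} supplying the exclusion of $(\be_{2t},\bu_{2t})$ from the kernel — precisely the substitution you identified. Your write-up is in fact slightly more careful than the original, since you explicitly dispatch the degenerate cases $v+\pi_\ba(v)\in\{\be,\bu\}$ that the paper leaves implicit.
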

\begin{proof}
It is an analogous proof to \Cref{evenpi-4tu2} since the associated permutations to the generators are the same.
\end{proof}

\begin{lemma}\label{1010circulant}
Let $C$ be an $\HFP(2t,2,2_\bu)$-code of length $4t$. 
Let $v$ be the codeword $\bomega_{4t}$ or $(\bomega_{2t},\bomega_{2t}\bu)$ in the kernel of $C$. If $v\neq \ba^t$, then there exists a circulant Hadamard code of length $t$. If $v=\ba^t$, then there exists an $\HFP(4t'_{\bu},2)$-code of length $4t'=t$.
The vectors $\bomega_{4t}$ and $(\bomega_{2t},\bomega_{2t}\bu)$ are not simultaneously in $K(C)$.
\end{lemma}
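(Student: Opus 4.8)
The plan is to first locate $v$ as an element of the group $C$, then dispatch the ``not simultaneously'' claim by linearity of the kernel, and finally obtain the length-$t$ code by projecting onto $\Supp(v)$ and exploiting the cycle structure of the generating permutations.

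First I would pin down $v$. Since $v\in K(C)\subseteq C$ and $v\notin\langle\bu\rangle$, and since in an $\HFP(2t,2,2_\bu)$-code every element satisfies $(\ba^i\bb^j\bu^k)^2=\ba^{2i}\in\langle\ba\rangle$, it suffices to show $v^2\in\langle\bu\rangle$ to conclude $v^2=\be$, because $\langle\ba\rangle\cap\langle\bu\rangle=\{\be\}$. A direct computation with the permutations of \Cref{genpermutations} gives, as vectors, $\pi_\ba(v)=v+\bu$ for both $v=\bomega_{4t}$ and $v=(\bomega_{2t},\bomega_{2t}\bu)$, together with $\pi_\bb(\bomega_{4t})=\bomega_{4t}$ and $\pi_\bb((\bomega_{2t},\bomega_{2t}\bu))=(\bomega_{2t},\bomega_{2t}\bu)+\bu$. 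Feeding these into the commuting relations $v*\ba=\ba*v$ and $v*\bb=\bb*v$ yields $\pi_v(\ba)=\ba+\bu$ and $\pi_v(\bb)\in\{\bb,\bb+\bu\}$; since moreover $\pi_v(\bu)=\bu$, the permutation $\pi_v^2$ fixes both $\ba$ and $\bb$ as vectors, which via the same commuting relations forces $v^2$ to be $\pi_\ba$- and $\pi_\bb$-invariant, hence $v^2\in\{\be,\bu\}$ and so $v^2=\be$. Thus $v$ is an involution and $\pi_v$ is a nontrivial involution of $\Pi=C_{2t}\times C_2$, so the coset of $v$ is one of $\ba^t$, $\bb$, or $\ba^t\bb$; in particular the dichotomy ``$v=\ba^t$ versus $v\neq\ba^t$'' is meaningful.

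The ``not simultaneously'' statement is then immediate: $\bomega_{4t}+(\bomega_{2t},\bomega_{2t}\bu)=(\be_{2t},\bu_{2t})$, so if both vectors lay in $K(C)$ then, $K(C)$ being a linear space, $(\be_{2t},\bu_{2t})\in K(C)$, contradicting \Cref{eukernel}.

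For the dichotomy I would project $C$ onto $\Supp(v)$. By \Cref{parseval2} this projection is $2$-to-$1$ onto an Hadamard code of length $2t$. One checks (for both choices of $v$) that $\pi_\ba$ maps $\Supp(v)$ bijectively onto its complement, so $\pi_\ba^2$ preserves $\Supp(v)$ and acts on its two natural halves $W,W'$ as two disjoint $t$-cycles, while a suitable generator ($\bb$ if $v=\bomega_{4t}$, and $\ba\bb$ if $v=(\bomega_{2t},\bomega_{2t}\bu)$) has a permutation that interchanges $W$ and $W'$. If $v\neq\ba^t$, then $\pi_v$ involves this swap, and commuting each $\ba^{2i}$ with $v$ forces the quarter-palindromic form $\ba^{2i}=(\alpha_{i,1},\alpha_{i,2},\alpha_{i,2},\alpha_{i,1})$, exactly as in \Cref{eu-4tu2}; projecting the first half of the $\ba^{2i}$ onto $\Supp(\bomega_{2t})$ then gives a length-$t$ Hadamard code on which $\ba^2$ induces a single $t$-cycle, that is, a cyclic $\HFP$-code, which by \Cref{rifacirculant} is a circulant Hadamard code of length $t$. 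If instead $v=\ba^t$, the extra generator survives the projection as an order-$2$ generator while $\alpha=\ba^2|_{\Supp(v)}$ satisfies $\alpha^{t/2}=\bu_{2t}$ (since $(\ba^2)^{t/2}=\ba^t=v$ restricts to all ones on its own support); reducing by the swap symmetry to a single block $W$ produces a length-$t$ code generated by an order-$t$ element whose $(t/2)$-th power is the all-ones vector, together with an order-$2$ generator, which is precisely an $\HFP(4t'_\bu,2)$-code with $4t'=t$.

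The main obstacle is this last paragraph: carefully verifying that the iterated reduction (first onto $\Supp(v)$ via \Cref{parseval2}, then onto a single block $W$ using the swap) preserves the Hadamard property and yields exactly the claimed cyclic, respectively $\HFP(4t'_\bu,2)$, structure. In particular one must confirm that the induced permutations have no fixed coordinate, so that the reduced code is genuinely full propelinear (here \Cref{evenpi} and the full-propelinear hypothesis are the tools), and that $t/2$ is an integer with $t\equiv 0\bmod 4$, so that the relation $\alpha^{t/2}=\bu$ is well posed; the latter follows from \Cref{tevensquare}, which guarantees $t$ is an even square. The bookkeeping that decides which of the three cosets $v$ occupies, and matches it to the correct conclusion, is the delicate part of the argument.
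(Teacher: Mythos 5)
Your overall strategy coincides with the paper's: identify the coset of $v$ in $C/\langle\bu\rangle$ among $\ba^t$, $\bb$, $\ba^t\bb$; dispatch the simultaneity claim via $\bomega_{4t}+(\bomega_{2t},\bomega_{2t}\bu)=(\be_{2t},\bu_{2t})$ and \Cref{eukernel}; and obtain the length-$t$ codes by commutation relations plus projection, \Cref{parseval2} and \Cref{rifacirculant}. Your route to $v^2=\be$ (squares lie in $\langle\ba\rangle$, and commutation forces $v^2$ to be $\pi_\ba$- and $\pi_\bb$-invariant) is correct and in fact cleaner than the paper's case-by-case computation. The analysis for $v=\bomega_{4t}$ is in substance the paper's proof, up to one slip: when $v$ lies in the coset of $\bb$, commuting $\ba^{2i}$ with $v$ gives the halves-equal form $(\alpha_{i,1},\alpha_{i,2},\alpha_{i,1},\alpha_{i,2})$, not the quarter-palindromic one; the projection argument is unaffected.

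The genuine gap is in the branch $v=(\bomega_{2t},\bomega_{2t}\bu)$, $v\neq\ba^t$. There your key premise --- that $\pi_v$ interchanges the two halves $W,W'$ of $\Supp(v)$ --- fails: for this vector $W$ consists of odd positions of the first half and $W'$ of even positions of the second half, while the only candidate permutations $\pi_\bb$ and $\pi_\ba^t\pi_\bb$ send $W$ to the odd positions of the second half, i.e.\ outside $\Supp(v)$ altogether (both $\pi_\bb$ and $\pi_\ba^t$ preserve parity because $t$ is even). Consequently the commutation relation does not yield the doubled structure on $\Supp(v)$, the projections of the $\ba^{2i}$ onto $W$ are not distance-controlled by \Cref{parseval2}, and no circulant Hadamard code is produced by your construction. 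The repair is exactly the step the paper makes and you omit: this branch is empty. You yourself computed $\pi_\bb\bigl((\bomega_{2t},\bomega_{2t}\bu)\bigr)=(\bomega_{2t},\bomega_{2t}\bu)+\bu$, and likewise $\pi_\ba^t\pi_\bb(v)=v+\bu$; hence if $v$ lay in the coset of $\bb$ or of $\ba^t\bb$, then $v*v=v+\pi_v(v)=\bu$, contradicting your own conclusion $v^2=\be$ (equivalently, $v^2=\be$ forces $\pi_v$ to preserve $\Supp(v)$, which these permutations do not). So $(\bomega_{2t},\bomega_{2t}\bu)$ can only be $\ba^t$ up to $\bu$, and only the second alternative of the dichotomy can occur for this vector; without recording this impossibility, your case analysis does not cover the statement as written.
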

\begin{proof}
Let $v$ be the codeword $\bomega_{4t}$ in the kernel of $C$. We suppose that $v \in \{\ba^i,\ba^i\bb\}$ for some $i$, then $v^2=\be$ if $i$ is even, and $v^2=\bu$ if $i$ is odd. But, since the structure of the code, there are no elements whose square is the vector $\bu$. Thus, we have $v^2=\be$ implying that $i=0$ or $i=t$, hence $v\in \{\bb,\ba^t \bb,\ba^t\}$.

Firstly, we suppose that $v =\bb$. From \Cref{parseval2}, the projection of the code onto $\Supp(\bb)$ should have an Hadamard structure, which is not necessarily full propelinear. However, from $\ba^2\bb=\bb\ba^2$ we obtain $\ba^2+\pi_{\bb}(\ba^2)=\bb+\pi_{\ba}^2(\bb)=\be$ and so $\ba^2$ has the same values in the first and in the second half part. The subgroup of $C$ generated by $\ba^2$ projected over the first half part of $\Supp(\bb)$ is an $\HFP(t,2_\bu)$-code $=\hat{C}$ of length $t$. From \Cref{rifacirculant}, we know that $\hat{C}$ is a circulant Hadamard code and its dimension of the kernel is $\hat{k}=1$.

Now, we suppose that $v=\ba^t\bb$.
From \Cref{parseval2}, the projection of the code onto $\Supp(\ba^t\bb)$ should have an Hadamard structure, which is not necessarily full propelinear. However, from $\ba^2\ba^t\bb=\ba^t\bb\ba^2$ we obtain $\ba^2+\pi_{\ba^t\bb}(\ba^2)=\ba^t\bb+\pi_{\ba}^2(\ba^t\bb)=\be$ and so $\ba^2$ has the same values in the first and in the second half part, but in different order. The subgroup of $C$ generated by $\ba^2$ projected over the first half part of $\Supp(\ba^t\bb)$ is an $\HFP(t,2_\bu)$-code. As before, it is a circulant Hadamard code.

Finally, we suppose that $v=\ba^t$. Since $\ba^j\ba^t=\ba^t\ba^j$, we have that $\ba^j=(\alpha_{j,1},\alpha_{j,1},\alpha_{j,2},\alpha_{j,2})$ if $j$ is even, and $\ba^j=(\gamma_{j,1},\bar{\gamma}_{j,1},\gamma_{j,2},\bar{\gamma}_{j,2})$ if $j$ is odd. 
Let $\bb$ be $(\beta_1,\beta_2,\beta_1,\beta_2)$ for some $\beta_1,\beta_2$, since $\bb^2=\be$. As $\bb \ba^t = \ba^t \bb$, we have $\beta_1=\beta_2$, and so $\bb=(\beta_1,\beta_1,\beta_1,\beta_1)$.
From \Cref{parseval2}, the projection of the code onto $\Supp(\ba^t)$ should have an Hadamard structure, which is not necessarily full propelinear.
The subgroup $\langle(\alpha_{2,1},\alpha_{2,2})\rangle \times \langle (\beta_1,\beta_1) \rangle$ projected onto $\Supp(\ba^t)$ is an $\HFP(t_\bu,2)$-code. 

Now, let $v=(\bomega_{2t},\bomega_{2t}\bu) \in K(C)$. Note that $v$ cannot be $\bb$ neither $\ba^i\bb$ for any $i$. If $v=b$ then $\bb^2=\bu$ which is not possible. If $v=\ba^i\bb$, then $v^2=\be$ if $i$ is odd, and $v^2=\bu$ if $i$ is even, which is not possible since $t$ is even. So $v=\ba^t$, but if we apply an analogous argumentation as before we obtain an $\HFP(4t'_\bu,2)$-code.

To conclude, we suppose that $\bomega_{4t}$ and $(\bomega_{2t},\bomega_{2t}\bu)$ are in $K(C)$. Since $K(C)$ is a linear subspace, then $\bomega_{4t}+(\bomega_{2t},\bomega_{2t}\bu)=(\be_{2t},\bu_{2t})\in K(C)$ which contradicts \Cref{eukernel}.
\end{proof}

\begin{theorem}\label{tpower2}
Let $C$ be an $\HFP(2t,2,2_\bu)$-code of length $4t$.
If $k>1$, then we have some of the following:
\begin{enumerate}[i)]
    \item $k=5$, $t=4$, and $C$ is linear,
    \item $k=3$, $t=1$, and $C$ is linear,
    \item $k=3$, $t=4$, and $r=6$,
    \item $k=2$, and there exists a circulant Hadamard code of length $4t'=t$, where $t'$ is an odd square,
    \item $k=2$, and there exists an $\HFP(4t'_\bu,2)$-code of length $4t'=t$, where $t'$ is even.
\end{enumerate}
\end{theorem}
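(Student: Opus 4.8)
The plan is to separate the linear and nonlinear cases, using throughout that $\pi_\ba^m\in\Aut(K(C))$ for all $m$ (\Cref{aut}) and that $\pi_\ba$ preserves Hamming weight. If $C$ is linear then $K(C)=C$, so $k=\log_2|C|=\log_2(8t)$; in particular $t$ is a power of two, and \Cref{boundtpow2} forces $t\in\{1,4\}$, giving $(k,t)=(3,1)$ and $(k,t)=(5,4)$, i.e. cases ii) and i). Otherwise $k\le 3$ by the bound $k\le 3$ already established for these codes, so $k\in\{2,3\}$. The common engine is the action of $\pi_\ba$ on the nonzero cosets of $\langle\bu\rangle$ in $K(C)$: if $[v]$ is fixed then $\pi_\ba(v)\in\{v,v+\bu\}$, where $\pi_\ba(v)=v$ makes $v$ constant on each $2t$-cycle and hence $v\in\{(\be_{2t},\bu_{2t}),(\bu_{2t},\be_{2t})\}$, excluded by \Cref{eukernel}, while $\pi_\ba(v)=v+\bu$ gives $\pi_\ba^2(v)=v$ with $\pi_\ba(v)\ne v$, so by \Cref{evenpi} the period is exactly $2$ and $v$ is alternating, i.e. $v\in\{\bomega_{4t},(\bomega_{2t},\bomega_{2t}\bu)\}$ up to adding $\bu$.

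For $k=2$ we have $K(C)=\langle\bu,v\rangle$, so $\pi_\ba(v)\in\{v,v+\bu\}$ and the dichotomy applies directly: $v$ is alternating, and \Cref{1010circulant} produces either a circulant Hadamard code of length $t$ (case iv, with $t=4t'$ and $t'$ an odd square by Turyn's theorem, \Cref{remarkTuryn}) or an $\HFP(4t'_\bu,2)$-code of length $t=4t'$ (case v, with $t'$ even by \Cref{teven-4tu2}).

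For $k=3$ the quotient $K(C)/\langle\bu\rangle$ has three nonzero cosets permuted by $\pi_\ba$. Since \Cref{1010circulant} forbids $\bomega_{4t}$ and $(\bomega_{2t},\bomega_{2t}\bu)$ from both lying in $K(C)$, at most one coset is fixed, so the identity action is impossible. The $3$-cycle is impossible too: if $\pi_\ba$ cycles $[v_2]\to[v_3]\to[v_2+v_3]\to[v_2]$, writing $\pi_\ba(v_2)=v_3+\epsilon_1\bu$ and $\pi_\ba(v_3)=(v_2+v_3)+\epsilon_2\bu$ and using $\pi_\ba(\bu)=\bu$ gives $\pi_\ba^3(v_2)=v_2$, so the period of $v_2$ divides $3$ and is odd, contradicting \Cref{evenpi}. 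Hence $\pi_\ba$ is a transposition fixing one coset: its representative $v_2$ is alternating, and the swapped pair forces the other generator $v_3$ to have $\pi_\ba$-period $4$. Feeding $v_2$ into \Cref{1010circulant} yields a circulant Hadamard code, or an $\HFP(4t'_\bu,2)$-code, of length $t$; because $v_3$ descends to a further nontrivial kernel vector of this reduced code, its kernel dimension is at least $2$. Then, in the circulant case \Cref{rifacirculantk1} forces the code to be linear and hence of power-of-two length, while in the $\HFP(4t'_\bu,2)$ case \Cref{tpower2-4tu2} directly gives power-of-two length; either way $t$ is a power of two. Since $16\mid 4t$ by \Cref{odd} and $t$ is an even square by \Cref{tevensquare}, \Cref{boundtpow2} leaves only $t=4$. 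Finally, for $t=4$ we have $|C|=32=2^5$, so $r\ge 5$ with equality only if $C$ is linear; nonlinearity gives $r\ge 6$, while \Cref{brk} with $s=4$, $t'=1$, $k=3$ gives $r\le 6$, whence $r=6$, which is case iii).

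The step I expect to be the main obstacle is showing that the reduced code of \Cref{1010circulant} genuinely inherits kernel dimension at least $2$ from $v_3$: one must follow the period-$4$ vector $v_3$ through the projection onto $\Supp(v_2)$ (and onto a half of it) and verify that its image remains a nonzero, translation-invariant codeword, so that \Cref{rifacirculantk1} or \Cref{tpower2-4tu2} can be brought to bear. The rest reduces to bookkeeping with the alternating patterns and the cyclic rank and kernel bounds already available.
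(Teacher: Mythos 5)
Your decomposition by the induced action of $\pi_\ba$ on $K(C)/\langle\bu\rangle$ is organized quite differently from the paper's proof: the paper reruns the halving construction of \Cref{tpower2-4tu2} to show that $t$ is a power of two unless an alternating vector enters the kernel, handles those exceptional branches with \Cref{1010circulant} (asserting $k=2$ there via \Cref{rifacirculantk1} and \Cref{k1-4tu2}), and then disposes of the remaining lengths $t\in\{1,4\}$ given by \Cref{boundtpow2} with an exhaustive \textsc{Magma} computation. Your handling of the linear case (counting $k=\log_2 8t$) and of $k=2$ (the fixed-coset dichotomy via \Cref{eukernel}, \Cref{evenpi} and \Cref{1010circulant}) is correct and replaces the computer search for cases i), ii), iv), v); the exclusion of the $3$-cycle through the computation $\pi_\ba^3(v_2)=v_2$ and \Cref{evenpi} is also sound.

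However, there is a genuine gap exactly where you flagged it, and it is essential: in the $k=3$ transposition case, the claim that $v_3$ ``descends to a further nontrivial kernel vector'' of the reduced code of \Cref{1010circulant} is unproved and is not a routine verification. That reduced code $\hat C$ is not a projection of $C$; it is a projection of a proper subgroup (generated by $\ba^2$, or by $\ba^2$ and $\bb$, according to whether $v_2$ equals $\bb$, $\ba^t\bb$ or $\ba^t$) onto half of $\Supp(v_2)$. From $v_3\in K(C)$ you only know that $x+v_3\in C$ for $x\in\langle\ba^2\rangle$; this codeword may lie in an odd-power or $\bb$-coset of $C$, whose restriction to the chosen $t$ coordinates need not belong to $\hat C$ at all, so translation-invariance of $\hat C$ under the restriction of $v_3$ does not follow. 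Moreover, since $v_3$ has $\pi_\ba$-period $4$, its restriction to the half-support is $2$-periodic, hence equal to $\be_t$, $\bu_t$, $\bomega_t$ or the complement of $\bomega_t$, and the two trivial possibilities must also be excluded. Without this step you cannot conclude that $t$ is a power of two when $k=3$, so case iii) is not established. Note that the paper never needs this particular descent: all $k=3$ possibilities are absorbed into the \textsc{Magma} verification at $t\in\{1,4\}$. To complete your route you would either have to prove the descent (for instance, show directly that $\bomega_t\in K(\hat C)$ using $v_3\in K(C)$ and the commutation relations), or concede a computational check for this remaining case.
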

\begin{proof}
Firstly, we see that $t$ is a power of two except in two cases. We can use the same proof that in \Cref{tpower2-4tu2}, since the associated permutations to the generators are the same. To approach the two exceptions that appears in the proof of \Cref{tpower2-4tu2}, here use \Cref{evenpi} and \Cref{1010circulant} instead of \Cref{evenpi-4tu2} and \Cref{eu-4tu2}. Therefore, there exists a circulant Hadamard code of length $4t'$ or an $\HFP(4t'_\bu,2)$-code, where $4t'=t$. If we have any of these two exceptions, from \Cref{rifacirculantk1} and \Cref{k1-4tu2} we have $k=2$.

If $t$ is a power of two, then $t\in \{1,4\}$ from \Cref{boundtpow2}.  Finally, making use of the software \textsc{Magma} \cite{magma} we compute all the possibilities for $\HFP(2t,2,2_\bu)$-codes with $t\in \{1,4\}$, and we check that there is only a nonlinear code with $t=4$ whose rank is $r=6$ and dimension of the kernel $k=3$. We also obtain a linear code with $r=k=3$ when $t=1$ and a linear code with $r=k=5$ when $t=4$.

\end{proof}

\subsection{\boldmath{$\HFP(2t,4_\bu)$}-codes} \label{2t4u}
In this subsection we assume that $C=\langle \ba,\bb : \ba^{2t}=\bu^2=\be, \bb^2=\bu \rangle$.
\begin{proposition}\label{2t4u-even}
Let $C$ be an $\HFP(2t,4_\bu)$-code of length $4t$ with $t > 1$. Then $t$ is even.
\end{proposition}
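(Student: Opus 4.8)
The plan is to argue by contradiction: assume $t$ is odd and derive that $C$ cannot have the rank forced by \Cref{rankkernel}. The crucial observation is that for $t$ odd we have $C_{2t}\times C_4\simeq C_{4t}\times C_2$, so an $\HFP(2t,4_\bu)$-code is simultaneously an $\HFP(4t_\bu,2)$-code; hence this is precisely the odd-$t$ situation already handled inside the proof of \Cref{teven-4tu2}, and the same rank computation applies. I would first record that, since $t>1$ is odd, $4t$ is not a power of two, so $C$ cannot be linear (a linear Hadamard code of length $4t$ would have dimension $3+\log_2 t$, which is an integer only when $t$ is a power of two). Thus $C$ is nonlinear and \Cref{rankkernel} gives $r=4t-1$; the goal is to contradict this.

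Next I would build the $4t\times 4t$ binary Hadamard matrix $H$ whose rows are the complement-pair representatives $\be,\ba,\dots,\ba^{2t-1}$ together with $\bb,\bb\ba,\dots,\bb\ba^{2t-1}$ (one per pair, since $\bb^2=\bu$). The heart of the argument is the sum $w=\sum_{i=0}^{2t-1}\ba^i$ of the first $2t$ rows. From $\ba^{i+1}=\ba+\pi_\ba(\ba^i)$ (see \Cref{xi}) and $\ba^{2t}=\be$ one checks that $\pi_\ba(w)=w$; and since $\pi_\ba=(1,\dots,2t)(2t+1,\dots,4t)$ is a product of two $2t$-cycles, the vectors it fixes are exactly those constant on each cycle, so $w\in\{\be_{4t},\bu_{4t},(\be_{2t},\bu_{2t}),(\bu_{2t},\be_{2t})\}$. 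A parallel computation (using that $2t$ is even, so the copies of $\bb$ cancel) shows that the last $2t$ rows sum to $\pi_\bb(w)$.

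Finally I would run the case analysis on $w$, exactly as in \Cref{teven-4tu2}. When $w=\be_{4t}$, the zero row $\be$ together with the vanishing of both half-sums furnishes enough dependencies that $\rank(H)\le 4t-3$, so $r\le 4t-2$; when $w=\bu_{4t}$, the vector $\bu$ lies in the span of the rows (so $r=\rank(H)$) and the two halves yield a common row-combination equal to $\bu$, giving $\rank(H)\le 4t-2$. In the remaining two cases $\pi_\bb$ swaps the halves, so $w+\pi_\bb(w)=\bu_{4t}$ is the sum of all rows of $H$; this forces every column of $H$ to have odd weight, contradicting the Hadamard property for $t>1$. In every case $r<4t-1$, contradicting \Cref{rankkernel}, so $t$ must be even. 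The only delicate point is the bookkeeping of the dependencies in the two constant cases and the parity obstruction in the other two; but because the permutations $\pi_\ba,\pi_\bb$ coincide with those of \Cref{teven-4tu2}, this step introduces no new idea and is essentially mechanical.
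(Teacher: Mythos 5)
Your proof is correct and takes essentially the same route as the paper: the paper's own proof of \Cref{2t4u-even} simply observes that the argument of \Cref{teven-4tu2} depends only on the associated permutations of \Cref{genpermutations} (which are identical here), and your proposal inlines exactly that argument --- the matrix $H$ of coset representatives, the $\pi_\ba$-invariant sum $w$ of the first $2t$ rows, the identification of the second half-sum with $\pi_\bb(w)$, and the same four-case rank/parity analysis contradicting $r=4t-1$. The only addition is your explicit check that $C$ is nonlinear for odd $t>1$ (so that \Cref{rankkernel} applies), a detail the paper leaves implicit; otherwise the two arguments coincide.
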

\begin{proof}
From \Cref{teven-4tu2}, as the proof of $t$ is even only depends on the associated permutations to the generators of the code, we have that $t$ is even.
\end{proof}

\begin{proposition}\label{tleq8}
Let $C$ be an $\HFP(2t,4_\bu)$-code of length $4t$. If $t$ is a power of two, then $t \leq 8$.
\end{proposition}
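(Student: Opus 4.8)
The plan is to use the equivalence of \cite{rs}, by which $C$ is an Hadamard group of order $8t$, and then to feed its natural cyclic quotient into the bound of \Cref{quotientbound}. First I would record the structural data. Since $C\cong C_{2t}\times C_4$ is abelian, every subgroup is normal; moreover $t=2^m$ is a power of two, so $|C|=8t=2^{m+3}$. Using $\bb^2=\bu$ and $\bb^4=\be$, the subgroup $\langle\bb\rangle=\{\be,\bb,\bu,\bb\bu\}$ is isomorphic to $C_4$, and the decisive observation is that $C/\langle\bb\rangle\cong C_{2t}$ is cyclic of order $2t$ (indeed, as $\exp(C)=2t$ for $t\ge 2$, this is the largest cyclic quotient of $C$).

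Next I would apply \Cref{quotientbound} with $G=\langle\bb\rangle$. Writing $|C|=8t=2^{2s+2}$, the bound gives $2t=|C/\langle\bb\rangle|\le 2^{s+2}$. Squaring and using $2^{2s+2}=8t$, we get $(2t)^2\le 2^{2s+4}=4\cdot 2^{2s+2}=4|C|=32t$, whence $4t^2\le 32t$ and therefore $t\le 8$. Note this squared form $|C/\langle\bb\rangle|^2\le 4|C|$ is exactly the quantitative content we need, and it outputs the stated bound directly.

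The main obstacle is a parity issue hidden in the step ``$|C|=2^{2s+2}$'': this requires $m+3$ to be even, i.e. $m$ odd. For $m$ odd the argument above applies verbatim and already discards every surviving power $t>8$ of this parity (such as $t=32,128,\dots$). For $m$ even, the order $8t=2^{m+3}$ is an \emph{odd} power of two, so \Cref{quotientbound} is not literally applicable, and this is the case I expect to cost the most work. I would resolve it by invoking the general exponent bound for Hadamard $2$-groups — of which \Cref{quotientbound} is the even-order special case — which still yields $|C/\langle\bb\rangle|\le 2\sqrt{|C|}$ and hence $2t\le 2\sqrt{8t}$, again giving $t\le 8$; as a fallback I would attempt to transfer the situation to an $\HFP(2t,2,2_\bu)$-code, where \Cref{boundtpow2} already forces $t\in\{1,4\}$. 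Finally I would remark that $t=1$ is trivial and, by \Cref{2t4u-even}, every $t>1$ here is even, so the powers of two that survive are exactly $t\in\{2,4,8\}$.
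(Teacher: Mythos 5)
Your core argument coincides with the paper's proof: for $t=2^s$ the paper takes the normal subgroup $G=\langle\bb\rangle\simeq C_4$, observes that $C/G\simeq C_{2^{s+1}}$ is cyclic, and applies \Cref{quotientbound} to obtain $2^{s+1}\le 2^{(s+5)/2}$ (the proof writes this with typos), hence $s\le 3$; your squared form $(2t)^2\le 4\,|C|=32t$ is the same arithmetic. Concerning the parity obstacle you raise: it is genuine, but the paper does not split into cases either --- its bound $2^{(s+5)/2}$ is a half-integer power of $2$ precisely when $s$ is even, so the published proof already relies tacitly on the form $|C/G|\le 2\sqrt{|C|}$ of Dillon's bound that you propose to invoke for $m$ even. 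Neither you nor the paper supplies a proof or reference for this extension of \Cref{quotientbound} to Hadamard groups whose order is an odd power of two, so on that point your argument is exactly as rigorous as the paper's, and flagging the issue explicitly is a merit of your write-up. Your fallback for the even case, however, should be discarded: there is no mechanism for passing from an $\HFP(2t,4_\bu)$-code to an $\HFP(2t,2,2_\bu)$-code of the same length so as to import \Cref{boundtpow2} --- these are inequivalent central extensions of $C_{2t}\times C_2$, and no such transfer can exist in general, since for $t=2$ an $\HFP(2t,4_\bu)$-code exists (it is linear, see \Cref{table1}) while \Cref{tevensquare} excludes any $\HFP(2t,2,2_\bu)$-code of that length.
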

\begin{proof}
Let $t=2^s$ for some $s$. Thus, we have $G=C_4$ is a normal subgroup of $C$ with $C/G \simeq C_{2^{s+1}}$. From \Cref{quotientbound}, the order of $G/C$ if at most $(s+5)/2$ and so $s\leq 3$.
\end{proof}

\begin{proposition}\label{eu-kernel-2t4u}
Let $C$ be an $\HFP(2t,4_\bu)$-code. If $(\be_{2t},\bu_{2t}) \in K(C)$ then $k=2$ and there exists a circulant Hadamard code of length $2t$.
\end{proposition}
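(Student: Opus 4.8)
The plan is to project the code onto the support of $v:=(\be_{2t},\bu_{2t})$ and to recognise the image as a circulant Hadamard code of half the length. First I would record the two immediate consequences of the hypothesis: since $v\in K(C)\setminus\langle\bu\rangle$ we already have $k\geq 2$, and since $v=(\be_{2t},\bu_{2t})$ its support is the second half $\{2t+1,\dots,4t\}$, on which $\pi_\ba$ restricts to the single $2t$-cycle $(2t+1,\dots,4t)$. Because $\pi_\ba$ permutes each half internally it fixes $v$, so $\pi_\ba(v)=v$; combining this with commutativity of $C$ the identity $\ba v=v\ba$ collapses to $\pi_v(\ba)=\ba$, i.e. $\ba$ is fixed by the permutation attached to $v$.

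The first real step is to pin down $v$ as a group element. Computing $v^2$ two ways — as the vector $v+\pi_v(v)$ and as $\ba^{2i}\bu^{j}$ when $v=\ba^i\bb^{j}$ — forces $\ba^{2i}=\be$, hence $i\in\{0,t\}$; discarding $\be,\bu$ and noting that $\ba^t\bu=(\bu_{2t},\be_{2t})\neq v$, the only candidate with $j$ even is $v=\ba^t$, while the candidates with $j$ odd all lie in the coset $\langle\ba\rangle\bb$. I expect the exclusion of $v=\ba^t$ to be the main obstacle. In that case $\pi_v=\pi_\ba^{t}$ preserves the two halves, so $\pi_v(\ba)=\ba$ forces $\ba=(\mu,\mu,\nu,\nu)$ with $\mu,\nu$ of length $t$, and every power $\ba^m$ is $t$-periodic on each half. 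Folding the second half (identifying a $t$-periodic word of length $2t$ with its length-$t$ pattern, under which $\pi_\ba$ becomes a $t$-cycle) sends $\langle\ba\rangle|_{\Supp(v)}$ to a propelinear code of length $t$ which is a cyclic group of order $2t$; it is Hadamard because the pairwise distances, lying in $\{t,2t\}$, halve to $\{t/2,t\}$, and a cyclic Hadamard propelinear code of length $t$ and order $2t$ contradicts \Cref{cyclic}. Hence $v\in\langle\ba\rangle\bb$, the permutation $\pi_v$ swaps the two halves, and $\pi_v(\ba)=\ba$ now gives $\ba=(\lambda,\lambda)$ with $\lambda$ of length $2t$.

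With $\ba=(\lambda,\lambda)$ in hand I would set $\alpha:=\ba|_{\Supp(v)}=\lambda$, so $\pi_\alpha$ is the $2t$-cycle $\sigma=(1,\dots,2t)$, $\alpha^{2t}=\be_{2t}$, and $\bu_{2t}\notin\langle\alpha\rangle$ (otherwise $\bu=\bb^2$ would lie in $\langle\ba\rangle$). Because $v\in K(C)$, every codeword of $C$ restricted to $\Supp(v)$ has the form $\alpha^{m}$ or $\alpha^{m}+\bu_{2t}$, so the projection equals $\hat C:=\langle\alpha,\bu_{2t}\rangle$, a code of length $2t$ and cardinality $4t$ whose associated group is $\langle\sigma\rangle\cong C_{2t}$, cyclic. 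By \Cref{parseval2} this projection is a Hadamard code, hence $\hat C$ is an $\HFP$-code of length $2t$ with cyclic associated group $C_{2t}$, which by \Cref{rifacirculant} is precisely a circulant Hadamard code of length $2t$.

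Finally I would read off the kernel dimension. \Cref{parseval2} also gives that $\hat C$ has dimension of the kernel equal to $k-1$. Since $\hat C$ is a nonlinear circulant Hadamard code (a linear Hadamard code of length $2t>4$ is not circulant, and the kernel bound already forbids a linear $\hat C$ once $t$ is even and $t>1$), \Cref{rifacirculantk1} shows that its kernel has dimension $1$. Therefore $k-1=1$, that is $k=2$, which together with the circulant Hadamard code of length $2t$ produced above completes the argument.
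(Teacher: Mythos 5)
Your proof follows the paper's for the bulk of the argument: the identification of the group-element candidates for $v=(\be_{2t},\bu_{2t})$ (via $v^2$ computed both as a vector and as a group element), and, in the cases where $\pi_v$ swaps the two halves, the projection of $C$ onto $\Supp(v)$, recognised as $\langle\alpha,\bu_{2t}\rangle$ with cyclic associated group of order $2t$, followed by \Cref{parseval2}, \Cref{rifacirculant} and \Cref{rifacirculantk1} to obtain the circulant Hadamard code of length $2t$ and $k=2$; this is exactly the paper's treatment of $v=\bb$ and $v=\ba^t\bb$. One slip there: when $v=\ba^t\bb$, the identity $\pi_v(\ba)=\ba$ gives $\ba=(\lambda,\lambda')$ with $\lambda'$ a cyclic shift of $\lambda$ by $t$ positions, not $\ba=(\lambda,\lambda)$; this is harmless, because the facts you actually use (the projections of $\langle\ba\rangle$ are the $\alpha^m$, and the projections of the $\bb$-cosets are the $\alpha^m+\bu_{2t}$) still hold.

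The genuine divergence is the case $v=\ba^t$. The paper does not exclude it: it shows $\ba^j=(\alpha_{j0},\alpha_{j0},\alpha_{j2},\alpha_{j2})$ and builds the required circulant Hadamard code of length $2t$ directly from the words $(\alpha_{j0},\alpha_{j2})$ and their complements, so $k=2$ in that case too. You instead fold the second half down to length $t$ and contradict \Cref{cyclic}, which would show the case is vacuous --- a stronger and arguably cleaner conclusion (the paper's construction tacitly needs a full-propelinearity check that it never performs). But your exclusion argument has two gaps. First, you assert that the folded code is a cyclic group of order $2t$ and that its words pairwise lie at distances in $\{t/2,t\}$; both claims need the $2t$ folded words $\nu_m$ to be pairwise distinct, which you never prove. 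It follows in one line, in the same style as your $\bu_{2t}\notin\langle\alpha\rangle$ argument: if $\nu_m=\nu_{m'}$ with $m\neq m'$, then $\ba^m$ and $\ba^{m'}$ agree on the whole second half, so by minimum distance $\ba^m+\ba^{m'}=(\bu_{2t},\be_{2t})=\ba^t+\bu$, hence $\ba^{m'}=\ba^{m+t}\bu$ and $\bu\in\langle\ba\rangle$, contradicting $C\simeq C_{2t}\times C_4$. Second, \Cref{cyclic} is a statement about codes of length $4t'$ and groups of order $8t'$, so your contradiction requires $4\mid t$; for $t\equiv 2\pmod{4}$ with $t>2$ you should argue instead that no Hadamard code of length $t$ exists at all, and for $t=2$ the exclusion fails outright --- consistently with the fact that at $t=2$ both your final step and the paper's rest on \Cref{rifacirculantk1}, hence implicitly on nonlinearity, so the smallest case needs separate treatment in either approach. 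With these repairs your argument is correct.
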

\begin{proof}
Note that if $v=(\be_{2t},\bu_{2t})\in C$, then $v \in \{\ba^t,\bb,\ba^t\bb\}$. Indeed, if $v =\ba^i\bb$ for some $i = 1,\ldots,2t$, then $(\be_{2t},\bu_{2t})^2=\bu$, so $i\in \{t,2t\}$. If $v=\ba^i$ for some $i=1,\ldots,2t$ then $(\be_{2t},\bu_{2t})^2=\be$ and so $i=t$.
Now, we suppose that $v \in K(C)$. From \Cref{parseval2}, we have that the projection of $C$ over the support of $v$ have an Hadamard structure. Thus, the second half of coordinates of each codeword have weight $0$ or $t$ or $2t$. We denote the second half of coordinates of a vector $x$ by $x^{(2)}$. If we set $\pi_{\ba^{(2)}}=(1,2,\ldots,2t)$, then $\ba^{(2)}$ generate a cyclic group or order $2t$.
Firstly, we suppose that $v=\bb$, so $\bb^{(2)}=\bu$. Thus we set $\pi_{\bb^{(2)}}=I$. Hence $\langle \ba^{(2)},\bb^{(2)} \rangle \simeq C_{2t} \times C_2$, which is an $\HFP(2t,2_\bu)$-code. Since $t$ is even, we have an $\HFP(4(t/2),2_\bu)$-code, which is a circulant Hadamard code. From \Cref{rifacirculantk1}, circulant Hadamard codes have dimension of the kernel equal to $1$ so we have that $k=2$ since \Cref{parseval2}.
Now, we suppose that $v=\ba^t\bb$ then $\pi_{(\ba^t\bb)^{(2)}}=I$. We have that $\langle \ba^{(2)},(\ba^t\bb)^{(2)} \rangle \simeq C_{2t} \times C_2$ which is an $\HFP(2t,2_\bu)-$code. Since $t$ is even, we have an $\HFP(4(t/2),2_\bu)$-code. As before, it is a circulant Hadamard code, and it has dimension of the kernel equal to $1$, so $k=2$.
Finally, we suppose that $v=\ba^t$. Let $\ba^j=(\alpha_{j0},\alpha_{j1},\alpha_{j2},\alpha_{j3})$, where $\alpha_{ji}$ has length $t$. Since $\ba^j\ba^t=\ba^t\ba^j$, we have that $\alpha_{j0}=\alpha_{j1}$, and $\alpha_{j2}=\alpha_{j3}$. Since $\bb^2=\bu$ and $\bb\ba^t=\ba^t\bb$, we have that $\bb=(\beta_1,\bar{\beta}_1,\bar{\beta}_1,\beta_1)$. We have the following Hadamard matrix,
\vspace{-0.2cm}

$$ H=\left(\begin{matrix}
A_1 & A_1 & A_2 & A_2\\
B_1+A_2 & \bar{B}_1 + A_2 & \bar{B}_1+A_1 & B_1+A_1
\end{matrix}\right),
$$
where $A_1, A_2$ are the matrices whose rows are $\alpha_{j0}, \alpha_{j2}$, respectively, for $j=1,\ldots,2t$, and $B$ is the matrix whose rows are $\beta_1$. Since $H$ is an Hadamard matrix, if we set the matrix
\vspace{-0.2cm}

$$\hat{H}=\left(\begin{matrix}
A_1 & A_2\\
B_1+A_2 & \bar{B}_1+A_1 
\end{matrix}\right),
$$
then $\hat{H}$ is a matrix whose rows have weight $t$, except the last row of $(A_1,A_2)$ which is $\be$, and the distance between each two rows is $t$.
Note that $(A_1,A_2)$ is an Hadamard matrix of order $2t$. Moreover, $(\alpha_0,\alpha_2)$ generates a cyclic group of order $2t$. Thus, the rows of $(A_1,A_2)$ and their complements are an $\HFP(2t,2_\bu)$-code. As before, we have that $k=2$. 
\end{proof}

\begin{remark}
Arasu, de Launey and Ma \cite{Arasu2002} proved that a circulant complex Hadamard matrix of order $2t$ is equivalent to a $(4t,2,4t,2t)$-relative difference set in the group $C_4 \times C_{2t}$ where the forbidden subgroup is the unique subgroup of order two which is contained in the $C_4$ component. Thus, $\HFP(2t,4_\bu)$-codes are equivalent to circulant complex Hadamard matrices of order $2t$. Arasu et al. \cite{Arasu2002} also conjectured that there is no circulant complex Hadamard matrix of order greater than $16$, and they proved several non-existence results for circulant complex Hadamard matrices. The following orders up to $1000$ have yet to be excluded: $260, 340, 442, 468, 520, 580, 680, 754, 820, 884, 890$.
\end{remark}

\begin{proposition}\label{circulant-2t4u}
If there exists a circulant Hadamard code $C$ of length $4t$, then also exists an $\HFP(4t,4_\bu)$-code of length $8t$ with kernel $\langle \bu, (\be_{4t},\bu_{4t}) \rangle$.
\end{proposition}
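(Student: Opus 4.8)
The plan is to construct the code explicitly by doubling the length of the given circulant code. By \Cref{rifacirculant} and \Cref{cyclic}, a circulant Hadamard code $C$ of length $4t$ is an $\HFP$-code with cyclic associated group of order $4t$, whose group structure is $C_{4t}\times\langle\bu_{4t}\rangle$; fix a generator $\ba$ with $\ba^{4t}=\be$ and $\pi_\ba=(1,2,\ldots,4t)$, so that $C=\{\ba^i,\ba^i+\bu_{4t}: 0\le i<4t\}$. On $\F^{8t}$ I define two generators $\bA=(\ba,\ba)$ and $\bB=(\be_{4t},\bu_{4t})$, together with the permutations $\pi_\bA=(1,\ldots,4t)(4t+1,\ldots,8t)$ and the half-swap $\pi_\bB=(1,4t+1)(2,4t+2)\cdots(4t,8t)$, and let $C'=\langle\bA,\bB\rangle$ be the propelinear code they generate. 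Because $\bB$ is constant on each half and $\pi_\bA$ shifts within halves, $\pi_{\bA^i}(\bB)=\bB$ and $\pi_\bB(\bA)=\bA$; using \Cref{xi} this gives the closed forms $\bA^i=(\ba^i,\ba^i)$, $\bA^i\bB=(\ba^i,\ba^i+\bu_{4t})$, and hence $C'=\{(u,v):u\in C,\ u+v\in\langle\bu_{4t}\rangle\}$, a set of $16t$ vectors of length $8t$.

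Next I would verify the group relations at both the vector and permutation levels: $\bA^{4t}=(\ba^{4t},\ba^{4t})=\be$ with $\pi_\bA^{4t}=I$; $\bB^2=\bB+\pi_\bB(\bB)=\bu$ with $\pi_\bB^2=I$, so $\bB^4=\be$; and $\bA\bB=\bA+\pi_\bA(\bB)=\bA+\bB=\bB+\pi_\bB(\bA)=\bB\bA$. Thus $C'\simeq C_{4t}\times C_4$ with $\bB^2=\bu$, i.e. it is of type $\HFP(4t,4_\bu)$. For the Hadamard property, writing two codewords as $(u_k,u_k+\epsilon_k\bu_{4t})$ with $\epsilon_k\in\{0,1\}$, their distance is $d(u_1,u_2)+\wt\!\big(u_1+u_2+(\epsilon_1+\epsilon_2)\bu_{4t}\big)$, which equals $2d(u_1,u_2)$ if $\epsilon_1=\epsilon_2$ and $4t$ if $\epsilon_1\neq\epsilon_2$; since $C$ has minimum distance $2t$, the minimum over distinct codewords is exactly $4t=8t/2$, so $C'$ is an Hadamard code. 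Finally the full propelinear condition holds: $\pi_\be=\pi_{\bB^2}=\pi_\bB^2=I$, while for $\bA^i\bB^j\notin\{\be,\bu\}$ the permutation $\pi_\bA^i\pi_\bB^j$ has no fixed point, because for odd $j$ it maps each half onto the other, and for even $j$ it is $\pi_\bA^i$ which is fixed-point-free unless $i\equiv0$ (the two cases $i\equiv0$, $j$ even being precisely $\be$ and $\bu$).

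For the kernel I would first observe the containment. Adding $\bB=(\be_{4t},\bu_{4t})$ permutes the four families $(\ba^i,\ba^i)$, $(\ba^i,\ba^i+\bu_{4t})$, $(\ba^i+\bu_{4t},\ba^i+\bu_{4t})$, $(\ba^i+\bu_{4t},\ba^i)$ among themselves, so $C'+\bB=C'$, and by definition $\bB\in K(C')$ (equivalently $\pi_\bB\in\Aut(C')$ by \Cref{aut}). Hence $\langle\bu,(\be_{4t},\bu_{4t})\rangle=\langle\bu,\bB\rangle\subseteq K(C')$, a binary subspace of dimension $2$. For the reverse inclusion I would invoke \Cref{eu-kernel-2t4u} applied to $C'$ read as an $\HFP(2t',4_\bu)$-code with $t'=2t$: since the relevant vector $(\be_{2t'},\bu_{2t'})=(\be_{4t},\bu_{4t})$ lies in $K(C')$, that proposition forces $k=2$. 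Therefore $K(C')=\langle\bu,(\be_{4t},\bu_{4t})\rangle$ exactly.

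The main obstacle is the routine-but-delicate check that the pair $(\bA,\bB)$ with the chosen permutations really defines a \emph{consistent} propelinear structure, i.e. that the vector embedding $\bA^i\bB^j\mapsto$ (explicit codeword) is injective and that $x*y$ agrees with the intended product throughout $C_{4t}\times C_4$. This is resolved once the generator relations are confirmed simultaneously at the vector level (via \Cref{xi} and the identities $\pi_{\bA^i}(\bB)=\bB$, $\pi_\bB(\bA)=\bA$) and at the permutation level, since then the associated permutation group is abelian of order $8t$ and the extension by $\langle\bu\rangle$ is automatically well defined. Everything else is direct computation, the only external input being \Cref{eu-kernel-2t4u} to bound the kernel from above.
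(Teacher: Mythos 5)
Your proof is correct and follows essentially the same route as the paper: the same Sylvester-doubling construction $\langle(\ba,\ba),(\be_{4t},\bu_{4t})\rangle$, the same observation that $(\be_{4t},\bu_{4t})$ lies in the kernel, and the same source for the upper kernel bound, since your appeal to \Cref{eu-kernel-2t4u} (in its case $v=\bb$) unwinds to exactly the paper's combination of \Cref{parseval2} with \Cref{rifacirculantk1} applied to the original circulant code. The only difference is that you verify the Hadamard and full-propelinear properties in detail where the paper simply asserts them.
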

\begin{proof}
From \cite{rifacirculant}, as $C$ is a circulant Hadamard code, we have that $C=\langle \ba, \bu\rangle$ is an $\HFP(4t,2_\bu)$-code. Let $\bb$ be $(\be_{4t},\bu_{4t})$, and $H$ be the Hadamard matrix whose rows are $\ba,\ba^2,\ldots,\ba^{4t}$. Let $\hat{H}$ be the matrix obtained by Sylvester's construction,
\vspace{-0.2cm}

$$\hat{H}=\begin{pmatrix}
H & H\\
H & \bar{H}
\end{pmatrix}
=
\begin{pmatrix}
\ba & \ba \\
\ba^2 & \ba^2\\
\vdots & \vdots\\
\ba^{4t} & \ba^{4t}\\
\ba & \ba+\bu \\
\ba^2 & \ba^2 +\bu \\
\vdots & \vdots\\
\ba^{4t} & \ba^{4t} +\bu \\
\end{pmatrix}.$$
We note that the rows of $\hat{H}$ and its complements form an $\HFP(4t,4_\bu)$-code $\hat{C}$ of length $8t$ given by $\langle (\ba,\ba), \bb\rangle$. As $(\be_{4t},\bu_{4t})+w \in \hat{C}$ for every $w \in \hat{C}$, we have that $(\be_ {4t},\bu_{4t}) \in K(\hat{C})$. As $C$ is a circulant Hadamard code, we have that $\dim(K(C))=1$ by \Cref{rifacirculantk1}. Hence, $K(\hat{C})=\langle \bu, (\be_{4t},\bu_{4t}) \rangle$.
\end{proof}

From \Cref{eu-kernel-2t4u} and \Cref{circulant-2t4u}, we have a link between the existence of $\HFP(2t,4_\bu)$-codes with $K(C)=\langle \bu, (\be_{2t},\bu_{2t}) \rangle$ and the existence of circulant Hadamard matrices. Although, even if Ryser's conjecture is true, there could exist $\HFP(2t,4_\bu)$-codes with a kernel different from $\langle \bu, (\be_{2t},\bu_{2t}) \rangle$.

In \cite{Arasu2002}, for length $2t=16$, the first row of a circulant complex Hadamard matrix is $(1,1,i,-i,i,1,1,i,-1,1,-i,-i,-i,1,-1,i)$, which corresponds to the generator $\ba=(0,0,0,1,1,0,0,0,1,1,1,0,0,1,1,1,1,0,1,1,1,1,0,1,0,1,0,\\0,0,0,1,0)$ of an $\HFP(2t,4_\bu)$-code with $r=11$ and $k=2$. We have found another $\HFP(2t,4_\bu)$-code of length $2t=16$ with $r=13$ and $k=1$ whose generator $\ba$ is $(0,0,0,0,0,0,1,0,0,1,0,1,0,1,1,1,1,0,0,0,1,1,1,1,1,1,0,1,1,0,1,\\0)$ which corresponds to the first row of a circulant complex Hadamard matrix equal to $(i,i,i,i,1,i,-i,-1,-i,i,i,-i,-1,i,-i,1)$. Now we show a conjecture about $\HFP$-codes that is equivalent to the conjecture presented by Arasu et al. \cite{Arasu2002} about circulant complex Hadamard matrices.

\begin{conjecture}\label{conjecture2t4u}
There do not exist $\HFP(2t,4_\bu)$-codes of lenght $4t$ for $t$ greater than $8$.
\end{conjecture}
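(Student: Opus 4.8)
The plan is to attack the statement head-on as a nonexistence theorem for relative difference sets, rather than merely recording its equivalence with an outstanding problem. By the equivalence recalled in the remark above (Arasu, de Launey and Ma \cite{Arasu2002}), an $\HFP(2t,4_\bu)$-code is exactly a $(4t,2,4t,2t)$-relative difference set $R$ in $G=C_4\times C_{2t}$ whose forbidden subgroup $N$ is the involution inside the $C_4$ factor, and by \Cref{2t4u-even} we may assume $t$ is even. The entry point is the group-ring identity $R\,R^{(-1)}=4t\cdot 1_G+2t\,(G-N)$. Applying a character $\chi=\psi\otimes\phi$, with $\psi$ a character of $C_4$ and $\phi$ a character of $C_{2t}$, gives $\chi(R)=0$ whenever $\chi$ is nontrivial but trivial on $N$, and $|\chi(R)|^2=4t$ exactly for the characters with $\psi$ of order $4$. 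Writing $R=\sum_{a=0}^{3}g^a R_a$ with $R_a\subseteq C_{2t}$ and collecting components into the single $\mathbb{Z}[i]$-valued element $T=(R_0-R_2)+i\,(R_1-R_3)\in\mathbb{Z}[i][C_{2t}]$, the order-$4$ character $\psi(g)=i$ yields $\chi(R)=\phi(T)$, so the two character equations collapse to the clean condition $T\,T^{*}=4t\cdot 1$, i.e. $|\phi(T)|=2\sqrt{t}$ for every character $\phi$ of $C_{2t}$, with the coefficients of $T$ being fourth roots of unity. Since $t$ is even we have $4\mid 2t$, so every value $\phi(T)$ lives in $\mathbb{Z}[\zeta_{2t}]$, and the whole problem is concentrated in these algebraic integers of fixed absolute value $2\sqrt{t}$.

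The heart of the argument is then number-theoretic. For each rational prime $p\mid t$ I would analyse the $p$-adic valuations of the Galois conjugates of $\phi(T)$ in $\mathbb{Z}[\zeta_{2t}]$. When $p$ is \emph{self-conjugate} modulo the $p'$-part of $2t$ (some power of $p$ is congruent to $-1$), complex conjugation fixes every prime above $p$, and $\phi(T)\overline{\phi(T)}=4t$ forces $p^{\,v_p(t)/2}$ to divide $\phi(T)$ for all $\phi$ simultaneously; pushing this divisibility back through the inverse Fourier transform (Ma's lemma) produces either a strictly smaller relative difference set or a parity contradiction, and iterating caps the $p$-part of $t$. The $2$-part is already controlled by the paper: \Cref{tleq8} eliminates $t$ a pure power of two, and more generally \Cref{exponentbound} together with \Cref{quotientbound}, applied to the $2$-Sylow structure of $C_4\times C_{2t}$ of exponent $2t$, bounds the $2$-adic valuation of $t$. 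For the odd part I would run a Turyn-type descent in the spirit of \Cref{remarkTuryn}: the self-dual relation $T\,T^{*}=4t$ constrains the odd part of $t$ to be a square, and Schmidt's field-descent estimate on an element of $\mathbb{Z}[\zeta_{2t}]$ of absolute value $2\sqrt{t}$ then eliminates all but finitely many residue classes. Combining the self-conjugacy $p$-adic bounds over every prime with the square condition and the exponent bound should reduce the admissible $t$ to a finite, explicitly describable list.

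The main obstacle is the last mile of this descent. The self-conjugacy hypothesis fails for infinitely many $t$, and precisely for those moduli the $p$-adic valuation argument produces no contradiction; these are exactly the residual orders (the list $260,340,442,\ldots$ recorded above) for which every available multiplier and field-descent estimate is inconclusive. Overcoming them seems to require one of two genuinely new ingredients: a sharper character-sum or self-conjugacy-free descent bounding $|\phi(T)|=2\sqrt{t}$ without a cooperating prime, or an exhaustive computational classification of the group-ring solutions $T$ for each residual modulus, which is presently out of reach for the larger values. I therefore expect the strategy above to settle every $t$ possessing a self-conjugate prime divisor and every $t$ violating the exponent or square constraints, isolating the difficulty in exactly this self-conjugacy-free family; the honest expectation is that a complete unconditional proof stands or falls on surmounting this descent barrier, which is the single step I would invest the most effort in developing.
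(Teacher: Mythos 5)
There is a fundamental mismatch here: the statement you are trying to prove is not a theorem of the paper but a \emph{conjecture}, explicitly presented as equivalent (via the Arasu--de Launey--Ma correspondence between $\HFP(2t,4_\bu)$-codes and circulant complex Hadamard matrices of order $2t$) to an open conjecture of Arasu et al.\ \cite{Arasu2002}. The paper offers no proof, and none is known. Your proposal is therefore being measured not against a hidden argument in the paper but against the actual state of the art, and by your own admission it does not close the problem.

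The genuine gap is exactly where you locate it, but it is worth being blunt about its size. The machinery you describe --- the group-ring equation $RR^{(-1)}=4t\cdot 1_G+2t(G-N)$, the repackaging of $R$ as an element $T\in\mathbb{Z}[i][C_{2t}]$ with fourth-root-of-unity coefficients satisfying $TT^{*}=4t$, self-conjugacy and Ma's lemma for the $p$-adic divisibility step, the exponent bounds (the paper's \Cref{exponentbound} and \Cref{quotientbound}, which give \Cref{tleq8}), and Schmidt-style field descent --- is precisely the toolkit already deployed in \cite{Arasu2002}. Running it does not ``reduce the admissible $t$ to a finite, explicitly describable list'': it leaves open an infinite family of orders (of which $260, 340, 442, 468, 520, 580, 680, 754, 820, 884, 890$ are the instances below $1000$ recorded in the paper), namely those where no prime divisor is self-conjugate and the field-descent bound $2\sqrt{t}$ is not contradictory. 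Your claim that the relation $TT^{*}=4t$ forces the odd part of $t$ to be a square also needs justification; Turyn's argument for Ryser's conjecture uses the rational integrality of a circulant $\pm1$ matrix determinant and does not transfer verbatim to the $\mathbb{Z}[i]$ setting. So the proposal is a competent survey of the known partial-nonexistence techniques plus an accurate diagnosis of why they fail, but the step that would actually prove the conjecture --- a self-conjugacy-free descent or a classification of the residual group-ring solutions --- is missing, and it is missing because it is an open research problem, not an oversight.
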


\subsection{\boldmath{$\HFP(t,Q_\bu)$}-codes} \label{tQu}

In this subsection we assume that $C=\langle \bd,\ba,\bb : \bd^t=\bu^2=\be, \ba^2=\bb^2=\bu, \ba\bb\ba=\bb \rangle$ and the value of $t$ is odd. These codes are equivalent to cocyclic Hadamard matrices for which the Sylow 2-subgroup of the indexing group is $Q$, which have been considered by several authors. Baliga and Horadam \cite{hb} proved that the Williamson Hadamard matrix of order $4t$ is a cocyclic Hadamard matrix over $C_{2t} \times C_2$ with $t$ odd. \'Alvarez, Gudiel and G\"uemes \cite{agg} established some bounds on the number and distribution of 2-coboundaries over $C_t \times C_2$ to use and the way in which they have to be combined in order to obtain a $C_t \times C_2^2$-cocyclic
Hadamard matrix. Also, they completed the computational results obtained by Baliga and Horadam.
Barrera and Dietrich \cite{bd} proved that there is a 1-1 correspondence between the perfect sequences of length $t$ over $Q \cap qQ$, with $q=(1+i+j+k)/2$, and the $(4t,2,4t,2t)$-relative difference sets in $C_t \times Q$ relative to $C_2$. 

\begin{proposition}\label{proptQu}
Let $C$ be an $\HFP(t,Q_\bu)$-code. Then, up to equivalence, we have
\begin{enumerate}[i)]
\item $\pi_\bd=(1,5,\ldots,4t-3)(2,6,\ldots,4t-2)(3,7,\ldots,4t-1)(4,8,\ldots,4t)$,
\item $\pi_\ba=(1,2)(3,4)\ldots (4t-1,4t)$,
\item $\pi_\bb=(1,3)(2,4)\ldots(4t-3,4t-1)(4t-2,4t)$,
\item $\ba=(A_1,A_2,\ldots,A_t)$ where $A_i\in \{ (0,1,0,1),(1,0,1,0),(0,1,1,0),\\(1,0,0,1)\}$,
\item Knowing the value of $\bd$ is enough to define $\ba$,
\item Knowing the value of $\ba$ is enough to define $\bb$,
\item $\Pi=C_{2t}\times C_2$.
\end{enumerate}
\end{proposition}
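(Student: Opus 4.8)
The plan is to mimic the reasoning of \Cref{genpermutations}, but now the cyclic generator is $\bd$ while $\ba,\bb$ are the order-$2$ generators, so the governing object is the centralizer of $\pi_\bd$ rather than a single $2t$-cycle. First I would pin down the cycle type of $\pi_\bd$. Since $t$ is odd, $\langle\bd\rangle\cap\langle\bu\rangle=\{\be\}$, hence $\bd^m\notin\{\be,\bu\}$ for $0<m<t$; as $C$ is full propelinear, each $\pi_\bd^m=\pi_{\bd^m}$ with $0<m<t$ is fixed-point-free, so $\pi_\bd$ has order exactly $t$ and every cycle has length exactly $t$. With $4t$ coordinates this forces four $t$-cycles, and after relabelling the coordinates as $t$ blocks of four (block $i$ carrying positions $4i-3,\dots,4i$) I may assume $\pi_\bd$ is the block shift of item~i). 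This identifies the coordinate set with $\Z_t\times\{1,2,3,4\}$, on which $\pi_\bd(i,r)=(i+1,r)$.

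Next I would exploit commutation. Because $\bd$ is central in $C$, both $\pi_\ba$ and $\pi_\bb$ commute with $\pi_\bd$; and because $\Pi=C/\langle\bu\rangle$ is abelian (indeed $\ba\bb=\bb\ba\bu$ together with $\bu\in K(C)$ gives $\pi_\ba\pi_\bb=\pi_{\ba\bb}=\pi_{\bb\ba}=\pi_\bb\pi_\ba$), the two involutions also commute with each other. A permutation commuting with $\pi_\bd$ lies in its centralizer $C_t\wr S_4$, so it has the block-affine form $(i,r)\mapsto(i+c_r,\rho(r))$ with $\rho\in S_4$ and shifts $c_r\in\Z_t$. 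Imposing $\pi_\ba^2=\pi_\bb^2=I$ together with the absence of fixed coordinates forces $\rho$ and $\rho'$ to be fixed-point-free involutions of $\{1,2,3,4\}$ (here oddness of $t$ is used again: a nonzero shift of odd order cannot be an involution), i.e. nonidentity elements of the Klein four-group $V=\{e,(12)(34),(13)(24),(14)(23)\}$, subject to $c_r+c_{\rho(r)}=0$ and $e_r+e_{\rho'(r)}=0$. Since $S_4$ acts as $S_3$ on $V\setminus\{e\}$, I may use the $S_4$-part of the relabelling to set $\rho=(12)(34)$ and $\rho'=(13)(24)$ simultaneously.

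It then remains to kill the shifts $c_r,e_r$ by a further relabelling $(i,r)\mapsto(i+d_r,r)$, which replaces them by $c_r-d_r+d_{\rho(r)}$ and $e_r-d_r+d_{\rho'(r)}$. The resulting linear system over $\Z_t$ is solvable precisely when the value $d_1-d_2-d_3+d_4$ forced by the $c$-equations agrees with the value forced by the $e$-equations; and this single compatibility condition is exactly the relation $e_r-e_{\rho(r)}=c_r-c_{\rho'(r)}$ extracted from $\pi_\ba\pi_\bb=\pi_\bb\pi_\ba$. Verifying that these match is the crux and, I expect, the main obstacle: one must check that the commutator identity furnishes exactly the consistency needed to eliminate both shift vectors at once, while the $S_4$-normalisation above has not used up the degrees of freedom required. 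Once this is done, $\pi_\ba$ and $\pi_\bb$ are the within-block involutions of items ii) and iii), and item vii) is immediate from \Cref{perm}.

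Finally I would read off the codewords. From $\ba^2=\bu$ and \Cref{xi} we get $\ba+\pi_\ba(\ba)=\bu$, i.e. $\pi_\ba(\ba)=\ba+\bu$; since $\pi_\ba$ acts within each block as $(1\,2)(3\,4)$, this says each block of $\ba$ has complementary first and second pairs, hence is of the form $(x,\bar x,y,\bar y)$, which enumerates exactly the four vectors of item iv). For items v) and vi) I would use the commutation recursions: $\bd\ba=\ba\bd$ yields $\ba+\pi_\bd(\ba)=\bd+\pi_\ba(\bd)$, a block recurrence that propagates $\ba$ from one block to the next once $\bd$ is known, while $\ba\bb=\bb\ba\bu$ yields $\bb+\pi_\ba(\bb)=\ba+\pi_\bb(\ba)+\bu$; combined with the within-block structure imposed by $\ba^2=\bb^2=\bu$, these determine $\ba$ from $\bd$ and $\bb$ from $\ba$, exactly as \Cref{genpermutations}~iii) determines $\bb$ from $\ba$. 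This last part is routine bookkeeping rather than a genuine difficulty.
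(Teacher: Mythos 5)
Your overall strategy is sound, and it is essentially a fleshed-out version of what the paper does: the paper dismisses items i)--iii) with ``analogous to the proof of \Cref{genpermutations}'', and your centralizer argument is a legitimate, more systematic way to supply those details (fixed-point-freeness of the $\pi_{\bd^m}$ gives the four $t$-cycles; the centralizer of $\pi_\bd$ gives the block-affine form; and the commutation identity $e_r-e_{\rho(r)}=c_r-c_{\rho'(r)}$ at $r=1$ is indeed exactly the compatibility condition $c_1-c_3=e_1-e_2$ that your linear system in the $d_r$ requires, so the step you flag as ``the crux'' does go through). Your treatment of iv)--vii) matches the paper's computations: item iv) from $\pi_\ba(\ba)=\ba+\bu$, item v) from $\bd\ba=\ba\bd$, item vi) from $\ba\bb\ba=\bb$ rewritten as $\bb+\pi_\ba(\bb)=\ba+\pi_\bb(\ba)+\bu$, and item vii) from \Cref{perm}.

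There is, however, one genuine gap: your simultaneous normalization of $\rho$ and $\rho'$ to $(12)(34)$ and $(13)(24)$ silently assumes $\rho\neq\rho'$. The conjugation action of $S_4$ on $V\setminus\{e\}$ can carry an ordered pair of \emph{distinct} fixed-point-free involutions to any other such pair, but it can never separate $\rho$ from $\rho'$ if they coincide, and if they coincided items ii) and iii) would simply be false; so this case must be excluded, not passed over. The exclusion is available with exactly the tools you already deployed: if $\rho=\rho'$, then $\pi_{\ba\bb}=\pi_\ba\pi_\bb$ is a pure shift $(i,r)\mapsto(i+e_r+c_{\rho'(r)},r)$; since $(\ba\bb)^2=(\ba\bb\ba)\bb=\bb^2=\bu$, we get $\pi_{\ba\bb}^2=I$, so $2\bigl(e_r+c_{\rho'(r)}\bigr)=0$, and oddness of $t$ forces $\pi_{\ba\bb}=I$; but $\ba\bb\notin\{\be,\bu\}$ (its square is $\bu\neq\be$, while $\be$ and $\bu$ square to $\be$), so full propelinearity demands that $\pi_{\ba\bb}$ be fixed-point-free --- a contradiction. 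Note that this is precisely the analogue of the step in the paper's proof of \Cref{genpermutations} where a fixed point of $\pi_{\bb^{-1}\ba^{i-1}}$ is used to force the transpositions of $\pi_\bb$ to cross between the two cycles of $\pi_\ba$; without it (or some equivalent), your argument is incomplete.
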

\begin{proof}
\textit{i)},\textit{ii)} and \textit{iii)} are analogous to the proof of \Cref{genpermutations}.
\\
\textit{iv)} Let $\ba=(a_1,\ldots,a_{4t})=(A_1,\ldots,A_t)$ where
\vspace{-0.2cm}

$$A_i= (a_{4i-3},a_{4i-2},a_{4i-1},a_{4i}).$$

Since $\ba^2=\ba+\pi_\ba(\ba)=\bu$, we have that
\vspace{-0.2cm}

$$A_i=(a_{4i-3},\bar{a}_{4i-3},a_{4i-1},\bar{a}_{4i-1}).$$

\textit{v)} We have $\ba=\pi_\bd(\ba)+\bd+\pi_\ba(\bd)=\pi_\bd(\ba)+\widehat{\bd}$, where $\widehat{\bd}=\bd+\pi_\ba(\bd)$. Then $a_{4i-3}=a_{4t-3}+\sum_{j=1}^i\widehat{d}_{4j-3}$, and $a_{4i-1}=a_{4t-1}+\sum_{j=1}^i\widehat{d}_{4j-1}$, for $i=1,\ldots,t$. Since $\ba^2=\bu$, we have $a_{4i-3}=a_{4i-2}+1$ and $a_{4i-1}=a_{4i}+1$.

\textit{vi)} From $\ba\bb\ba=\bb$ we have that $\ba+\pi_\ba(\bb)=\bb+\pi_\bb(\ba^{-1})=\bb+\pi_\bb(\ba^3)=\bb+\pi_\bb(\bar{\ba})$, so $\ba+\pi_\bb(\bar{\ba})=\bb+\pi_\ba(\bb)$.
Note that $\ba+\pi_\bb(\bar{\ba})=(\hat{A}_1,\ldots,\hat{A}_t)$ where $\hat{A}_i=(1,1,1,1)$ or $(0,0,0,0)$.
\vspace{-0.2cm}

$$ \hat{A}_i=\left \{
      \begin{matrix}
         (1,1,1,1)$ if $A_i\in\{(0,1,0,1),(1,0,1,0)\} \\
         (0,0,0,0)$ if $A_i\in\{(0,1,1,0),(1,0,0,1)\}
      \end{matrix}
    \right .
$$
    
Let $\bb=(b_1,\ldots,b_{4t})=(B_1,\ldots,B_t)$ where
\vspace{-0.2cm}

$$
B_i= (b_{4i-3},b_{4i-2},b_{4i-1},b_{4i}),
$$
since $\bb^2=\bu$ we have
\vspace{-0.2cm}

$$B_i=(b_{4i-3},b_{4i-2},\bar{b}_{4i-3},\bar{b}_{4i-2}).$$

Thus,
\vspace{-0.2cm}

$$ B_i \in \left .
      \begin{matrix}
         \{(0,1,1,0),(1,0,0,1)\}$ if $A_i\in\{(0,1,0,1),(1,0,1,0)\} \\
         \{(0,0,1,1),(1,1,0,0)\}$ if $A_i\in\{(0,1,1,0),(1,0,0,1)\}
      \end{matrix}
    \right .$$
\\
\textit{vii)}
Immediately from \Cref{perm} and $t$ is odd.
\end{proof}
\begin{proposition}
Let $C$ be an $\HFP(t,Q_\bu)$-code, where $t$ is odd. Then $r=4t-1$ and $k=1$.
\end{proposition}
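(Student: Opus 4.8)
The plan is to recognize the statement as the case $t$ odd of the general \Cref{rankkernel}, so the entire task reduces to verifying that its two hypotheses hold for an $\HFP(t,Q_\bu)$-code: that the associated group is $C_{2t}\times C_2$, and that $C$ is nonlinear. The first hypothesis is immediate from item \textit{vii)} of \Cref{proptQu}, which already records $\Pi=C_{2t}\times C_2$, and $t$ is odd by the standing assumption of this subsection. Hence, once nonlinearity is established, item \textit{i)} of \Cref{rankkernel} delivers $r=4t-1$ and $k=1$ at once.

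The only genuine point to settle is therefore nonlinearity, which I would argue by a cardinality count. As a group, $C\simeq C_t\times Q$ has order $8t$, so the code has $|C|=8t$ codewords. A binary linear code has a number of codewords equal to a power of two, whereas $8t$ is a power of two only when $t$ is a power of two; since $t$ is odd and (for a nondegenerate Hadamard code) $t>1$, the integer $8t$ is not a power of two. Consequently $C$ cannot be linear, and the hypothesis of \Cref{rankkernel} is met.

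To keep the argument self-contained I would also exhibit the two conclusions directly rather than only citing \Cref{rankkernel}. Writing the length as $4t=2^2t'$ with $t'=t$ odd places us in the case $s=2$, so item \textit{(ii)} of \Cref{aske} gives $r=4t-1$ with no further assumption needed. For the kernel, nonlinearity lets me invoke \Cref{odd} with $s=2$, which forces $1\le k\le s-1=1$, hence $k=1$.

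The main obstacle is precisely this nonlinearity step, and in particular the exclusion of the degenerate value $t=1$: there $C$ has order $8=2^3$ and is in fact the linear even-weight code of length $4$ (for which $k=3$), so both the cardinality argument and the conclusion $k=1$ genuinely require $t>1$. Everything else is bookkeeping, namely matching the permutation and group data from \Cref{proptQu} to the hypotheses of the cited lemmas; no new estimate is required.
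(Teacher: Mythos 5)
Your proposal is correct and takes essentially the same route as the paper: the paper's proof is a one-line appeal to \Cref{rankkernel}, whose odd case amounts exactly to the combination of \Cref{aske}(ii) and \Cref{odd} that you spell out, with the nonlinearity hypothesis (your cardinality count on $|C|=8t$) left implicit. Your caveat about $t=1$ is sensible but moot from the paper's standpoint: there $C\simeq Q$ is not a nontrivial direct product of groups (the standing assumption of \Cref{C2tC2}), and \Cref{table1} records the nonexistence of an $\HFP(1,Q_\bu)$-code, so the proposition is vacuous in that degenerate case.
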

\begin{proof}
It is immediate from \Cref{rankkernel} and $t$ is odd.
\end{proof}

\section{Summary of results} \label{magmares}

In \Cref{table1} and \Cref{table2} we summarize the results of the current paper.
In \Cref{table1}, we show the values of the rank and the dimension of the kernel of codes that we have computed with \textsc{magma}, fulfilling the analytic results, for length $4t$. \'O Cath\'ain and R\"oder \cite{or} built the cocyclic Hadamard matrices for $t \leq 9$ that corresponds with much of the HFP-codes in \Cref{table1}, but we have computed the rank and the dimension of the kernel.
In \Cref{table2} we can see the values of the rank and the dimension of the kernel for which nonlinear $\HFP(\cdot,\cdot,\cdot)$-codes could exist for length $4t$.

\begin{table}[ht]
\centering
\begin{tabular}{|c|c|c|c|c|c|c|c|c|}
\hline
\multirow{2}{*}{t} & \multicolumn{2}{c|}{$(4t_\bu,2)$} & \multicolumn{2}{c|}{$(2t,2,2_\bu)$} & \multicolumn{2}{c|}{$(2t,4_\bu)$} &  \multicolumn{2}{c|}{$(t,Q_\bu)$} \\ \cline{2-9}
 
 & $r$ & $k$ & $r$ & $k$ & $r$ & $k$ & $r$ & $k$ \\ \hline
 
1 & 3 & 3 & 3 & 3 & 3 & 3 & x & x \\ \hline

2 & 4 & 4 & \checkmark & \checkmark & 4 & 4 & - & - \\ \hline

3 & \checkmark & \checkmark & \checkmark & \checkmark & \checkmark & \checkmark & 11 & 1 \\ \hline

\multirow{3}{*}{4} & \multirow{3}{*}{x} & \multirow{3}{*}{x} & 5 & 5 & \multirow{2}{*}{x} & \multirow{2}{*}{x} & \multirow{3}{*}{-} & \multirow{3}{*}{-} \\ \cline{4-5}
 &  & & 6 & 3 &  &  &  &  \\ \cline{4-7}
 &  &  & x & x & 7 & 2 &  &  \\ \hline

5 & \checkmark & \checkmark & \checkmark & \checkmark & \checkmark & \checkmark & 19 & 1 \\ \hline

6 & x & x & \checkmark & \checkmark & x & x & - & - \\ \hline

7 & \checkmark & \checkmark & \checkmark & \checkmark & \checkmark & \checkmark & 27 & 1 \\ \hline

\multirow{2}{*}{8} & \multirow{2}{*}{x} & \multirow{2}{*}{x} & \multirow{2}{*}{\checkmark} & \multirow{2}{*}{\checkmark} & 11 & 2 & \multirow{2}{*}{-} & \multirow{2}{*}{-} \\ \cline{6-7}
 &  &  &   &   & 13 & 1 &  & \\ \hline
 
9 & \checkmark & \checkmark & \checkmark & \checkmark & \checkmark & \checkmark & 35 & 1 \\ \hline

10 & x & x & \checkmark & \checkmark & x & x & - & - \\ \hline                 
\end{tabular}
\medskip
\caption{Rank and dimension of the kernel of Hadamard full propelinear codes with associated group $C_{2t} \times C_2$. Symbol x means that the non-existence was checked with \textsc{magma}, symbol \checkmark means that the non-existence was proved analytically, and ``-'' means that the code does not have $C_{2t}\times C_2$ as associated group.}
\label{table1}
\end{table}

\begin{table}[ht]
\centering
\begin{tabular}{|c|c|c|c|}
\hline
 $\HFP(\cdot,\cdot,\cdot)$ & $t$ & $r$ & $k$  \\ \hline
 $(4t_\bu,2)$ &  even & $\leq 2t$ & $1$  
	\\ \hline
$(2t,2,2_\bu)$ & even square & $\leq 2t$ & $1,2,3$ 
 \\ \hline
$(2t,4_\bu)$ & even 	& $\leq 2t$& $1,2,3$ 
\\ \hline    
$(t,Q_\bu)$ & odd & $4t-1$ & $1$ \\ \hline
\end{tabular}
\medskip
\caption{Values of the rank $r$, and dimension of the kernel $k$ for which nonlinear $\HFP(\cdot,\cdot,\cdot)$-codes could exist for length $4t$.}
\label{table2}
\end{table}


\newpage

\end{document}